\title{5-dimensional geometries II: the non-fibered geometries}
\author{Andrew Geng}
\theoremstyle{plain}
\newtheorem{thm}{Theorem}[section]
\newtheorem{prop}[thm]{Proposition}
\newtheorem{lemma}[thm]{Lemma}
\newtheorem{cor}[thm]{Corollary}
\theoremstyle{definition}
\newtheorem{defn}[thm]{Definition}
\theoremstyle{remark}
\newtheorem{rmk}[thm]{Remark}
\newtheorem{eg}[thm]{Example}
\let\c@figure\c@thm
\let\c@table\c@thm
\numberwithin{figure}{section}
\numberwithin{table}{section}
\newcommand{\keyword}{\emph}
\newcommand{\op}{\operatorname}
\newcommand{\ol}{\overline}
\newcommand{\R}{\mathbb{R}}
\newcommand{\C}{\mathbb{C}}
\newcommand{\Q}{\mathbb{Q}}
\newcommand{\Z}{\mathbb{Z}}
\newcommand{\Hyp}{\mathbb{H}}
\newcommand{\Euc}{\mathbb{E}}
\newcommand{\Heis}{\mathrm{Heis}}
\newcommand{\Sol}{\mathrm{Sol}}
\newcommand{\Nil}{\mathrm{Nil}}
\newcommand{\lie}{\mathfrak}
\newcommand{\vecspan}[1]{ \operatorname{span}\left( { { #1 } } \right) } 
\newcommand{\semisum}{\mathrlap{+}{\supset}} 
\newcommand{\nilrad}{\op{nil}}
\newcommand{\SemiR}[2]{ { \underset{ {#2} }{ { {#1} } \rtimes \R} } }
\begin{document}

\maketitle

\begin{abstract}
We classify the $5$-dimensional homogeneous geometries in the sense of Thurston.
The present paper (part 2 of 3) classifies those in which
the linear isotropy representation is either irreducible or trivial.
The $5$-dimensional geometries with irreducible isotropy are
the irreducible Riemannian symmetric spaces,
while those with trivial isotropy
are simply-connected solvable Lie groups of the form
$\R^3 \rtimes \R^2$ or $N \rtimes \R$ where $N$ is nilpotent.
\end{abstract}

\tableofcontents

\section{Introduction}

Riemannian homogeneous spaces appeared in Thurston's Geometrization Conjecture
as local models for pieces in a decomposition of $3$-manifolds.
Those with compact quotients and maximal symmetry,
the eight \keyword{geometries} (see Defn.~\ref{defn:geometries} for details),
are classified by Thurston in \cite[Thm.~3.8.4]{thurstonbook}.

Filipkiewicz classified the $4$-dimensional geometries in \cite{filipk},
retaining the conditions that make a homogeneous space a geometry in the
sense of Thurston. Imitating this approach in the $5$-dimensional case,
the problem divides into cases for each
representation by which the point stabilizer $G_p$ of $p \in M = G/G_p$
acts on the tangent space $T_p M$ (the ``linear isotropy representation'').

If $T_p M$ decomposes in a nice way into lower-dimensional
irreducible sub-representations, then this decomposition can
be exploited to classify the resulting $5$-dimensional geometries,
via a $G$-invariant fiber bundle structure on $M$.
Such an approach is carried out in Part III \cite{geng3}.
The present paper concerns itself with the classification
when $T_p M$ is irreducible or trivial; in this case,
$T_p M$ has no proper characteristic summands as an $G_p$-representation,
which makes its decomposition less useful.

Instead, we appeal to existing classifications:
the classification of strongly isotropy irreducible homogeneous spaces by
Manturov \cite{manturov1, manturov2, manturov3, manturov1998},
Wolf \cite{wolf_irr, wolf_irr_fix}, and Kr\"amer \cite{kramer1975}
when $T_p M$ is irreducible;
and the method used by Mubarakzyanov in \cite{muba_solvable5}
and Filipkiewicz in \cite[Ch.~6]{filipk}
for classifying unimodular solvable real Lie algebras by nilradical
when $T_p M$ is trivial.
Adapting these classifications to the setting of Thurston's geometries
requires answering questions about lattices and maximal isometry groups,
which is done in this paper.
The result is as follows.
\begin{thm}[\textbf{Classification of $5$-dimensional maximal model
    geometries with irreducible or trivial isotropy}] \label{thm:main}

    Let $M = G/G_p$ be a $5$-dimensional maximal model geometry.
    \begin{enumerate}[(i)]
        \item If $G_p$ acts irreducibly on the tangent space $T_p M$,
            then $M$ is one of the classical spaces
            $\Euc^5$, $S^5$, and $\Hyp^5$ with its usual isometry group,
            or one of the other irreducible Riemannian
            symmetric spaces $\op{SL}(3,\R)/\op{SO}(3)$
            and $\op{SU}(3)/\op{SO}(3)$. (Prop.~\ref{thm:irr_list})
        \item If $G_p$ acts trivially on $T_p M$, then $M \cong G$ is
            one of the following connected, simply-connected, solvable Lie groups.
            \begin{enumerate}
                \item
                    $\R^3 \rtimes \{xyz=1\}^0$,
                    the semidirect product $\R^3 \rtimes \R^2$ where $\R^2$ acts by
                    diagonal matrices with positive entries and determinant $1$.
                    (Prop.~\ref{prop:split_r3})
                \item
                    $\R^4 \rtimes_{e^{tA}} \R$, where the multiset of
                    characteristic polynomials of the Jordan blocks of $A$
                    is one of the multisets below.
                    These are abbreviated as $\SemiR{\R^4}{\text{polynomials}}$.
                    (Prop.~\ref{prop:semiwithabelian})
                    \begin{itemize}
                        \item $\{x-a,\, x-b,\, x-c,\, x+a+b+c\}$, where
                                \begin{enumerate}[1.]
                                    \item $a \neq b \neq c \neq a$, and
                                    \item $e^{tA}$ has integer characteristic polynomial
                                        for some $t$.
                                \end{enumerate}
                            (This is a countably infinite family of geometries.)
                        \item $\{x^2,\, x-1,\, x+1\}$
                        \item $\{(x-1)^2,\, (x+1)^2\}$
                        \item $\{x^3,\, x\}$
                        \item $\{x^4\}$
                    \end{itemize}
                \item
                    $\SemiR{\Nil^4}{3 \to 1}$ and $\SemiR{\Nil^4}{4 \to 3 \to 1}$,
                    whose Lie algebras have basis $x_1,\ldots,x_5$ and
                    the following nonzero brackets.
                    (Prop.~\ref{seminilfour})
                    \begin{align*}
                        [x_4,x_3] &= x_2
                            & [x_5, x_3] &= x_1 \\
                        [x_4,x_2] &= x_1
                            & [x_5, x_4] &= 0 \text{ or } x_3 \text{ respectively}.
                    \end{align*}
                \item
                    $\R \times \Sol^4_1$ and
                    $\SemiR{(\R \times \Heis_3)}{\text{Lorentz},\, y \to x_1}$,
                    whose Lie algebras have basis $y,x_1,x_2,x_3,z$ and
                    the following nonzero brackets.
                    (Prop.~\ref{prop:heisenpluslorentz})
                    \begin{align*}
                        [x_3,x_2] &= x_1  &
                        [z,x_2] &= x_2  &
                        [z,x_3] &= -x_3  &
                        [z,y] &\mapsto 0 \text{ or } x_1 \text{ respectively}.
                    \end{align*}
            \end{enumerate}
    \end{enumerate}
    Moreover, all of the above spaces
    are maximal model geometries.
\end{thm}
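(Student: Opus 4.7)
The plan is to dispatch the theorem as a dichotomy on the isotropy representation, delegating each sub-case to the indicated proposition. At the level of the main theorem itself the only task is to justify why these propositions exhaust the possibilities.

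For part (i), irreducibility of the $G_p$-action on $T_pM$ places $M$ among the strongly isotropy irreducible homogeneous spaces classified by Manturov, Wolf, and Kr\"amer. Restricting this list to dimension $5$ and to those realizable as Thurston geometries — i.e.\ those for which $G$ acts effectively with compact isotropy on a quotient admitting a compact model — leaves the five symmetric spaces stated. Each must then be verified to be a maximal model (no larger transitive connected Lie group with compact isotropy), which is the content of Proposition~\ref{thm:irr_list}.

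For part (ii), trivial isotropy forces $G_p = \{e\}$, so $G$ acts simply transitively on $M$ and $M \cong G$. Existence of a compact quotient forces $G$ to be simply-connected and unimodular, and excluding compact or semisimple summands — whose bi-invariant or conjugation symmetries would enlarge the isotropy above the trivial one — reduces us to the solvable case. I would then stratify by the isomorphism type of the nilradical, following Mubarakzyanov~\cite{muba_solvable5} and Filipkiewicz~\cite[Ch.~6]{filipk}; the cases that arise correspond to (ii.a)--(ii.d). A purely nilpotent $G$ is excluded because any left-invariant metric on such a group admits further isometries, making the linear isotropy non-trivial in its maximal model and placing the geometry under the fibered case treated in Part III~\cite{geng3}.

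The main obstacle lies in case (ii): for each Lie algebra on the unimodular solvable list one must (a) decide whether the simply-connected group admits a lattice, (b) verify that the left-invariant Riemannian geometry is not preserved by a larger connected isometry group, and (c) eliminate redundancies arising from Lie algebra isomorphisms or from continuous parameters. The most delicate step is the arithmetic condition in case (ii.b): a generic $A$ with distinct real eigenvalues gives a group admitting a lattice only when $e^{tA}$ has integer characteristic polynomial for some $t$, carving a countably infinite family out of a continuous one. Once each sub-case is dispatched by its proposition, the theorem follows by assembly.
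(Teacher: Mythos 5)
Your overall architecture for part (i) and most of part (ii) matches the paper's: Wolf's classification of isotropy irreducible spaces handles (i), and (ii) proceeds by reducing to unimodular solvable $G$, stratifying by nilradical, testing lattice existence (with the integer-characteristic-polynomial criterion carving out the countable family in (ii.b)), and checking maximality against $\op{Aut}(G)$. One small misattribution: Prop.~\ref{thm:irr_list} only produces the list of candidates; maximality and the existence of compact quotients in case (i) are separate steps (Prop.~\ref{prop:irr_model} and Prop.~\ref{prop:irr_maximal}, the latter resting on the fact that $G = (\op{Isom} G/K)^0$ for a symmetric space with $G$ semisimple and faithful).

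However, there is a genuine error in your treatment of (ii): you exclude nilpotent $G$ on the grounds that ``any left-invariant metric on such a group admits further isometries.'' This is false, and it would delete four of the geometries the theorem asserts, namely the nilpotent groups $\SemiR{\R^4}{x^4}$, $\SemiR{\R^4}{x^3,\,x} \cong \Nil^4 \times \Euc$, $\SemiR{\Nil^4}{3\to1}$, and $\SemiR{\Nil^4}{4\to3\to1}$, all of which appear in (ii.b)--(ii.c) and in the nilpotent branch of Figure~\ref{fig:solvable_flowchart}. By Wilson's theorem (the nilpotent case of Thm.~\ref{thm:gordonwilsonisometry}), the identity component of the isometry group of a left-invariant metric on a simply-connected nilpotent $G$ is $G \rtimes C$ with $C \subseteq \op{Aut} G$ compact; the isotropy is enlarged only when $\op{Aut} G$ has a nontrivial compact connected subgroup. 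The paper shows (Prop.~\ref{prop:solvable_maximality}, Case 1) that each of the four nilpotent algebras above carries a complete flag of characteristic ideals, forcing $(\op{Aut}\lie{g})^0$ to be upper-triangularizable and hence to contain no nontrivial compact connected subgroup. Your intuition is correct for, say, $\Heis_3$ (whose maximal geometry is $\Nil^3$ with $\op{SO}(2)$ isotropy), but it does not generalize. Relatedly, your sketch skips the reduction to split extensions (Prop.~\ref{prop:semidirect_5d}): in the nilradical-$\R^3$ case one must actually compute $H^2(\R^2;\R^3)$ for each faithful traceless action to rule out non-split extensions and non-diagonal actions, which is where a real amount of the work in (ii.a) lives.
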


\begin{rmk}
    Properties of the geometries are not explored in depth here.
    For the symmetric spaces in Thm.~\ref{thm:main}(i), one can
    see e.g.\ \cite[\S 9.6]{wolf} for a discussion of $\op{SU}(3)/\op{SO}(3)$
    and \cite[Appendix 5]{ballmangromovschroeder} for
    a discussion of $\op{SL}(3,\R)/\op{SO}(3)$.
    To make sense of the array of solvable Lie groups in
    Thm.~\ref{thm:main}(ii), one can construct an identification key
    to distinguish them, such as Figure \ref{fig:solvable_flowchart}
    (proven in Prop.~\ref{prop:solvable_distinctness}).
\end{rmk}

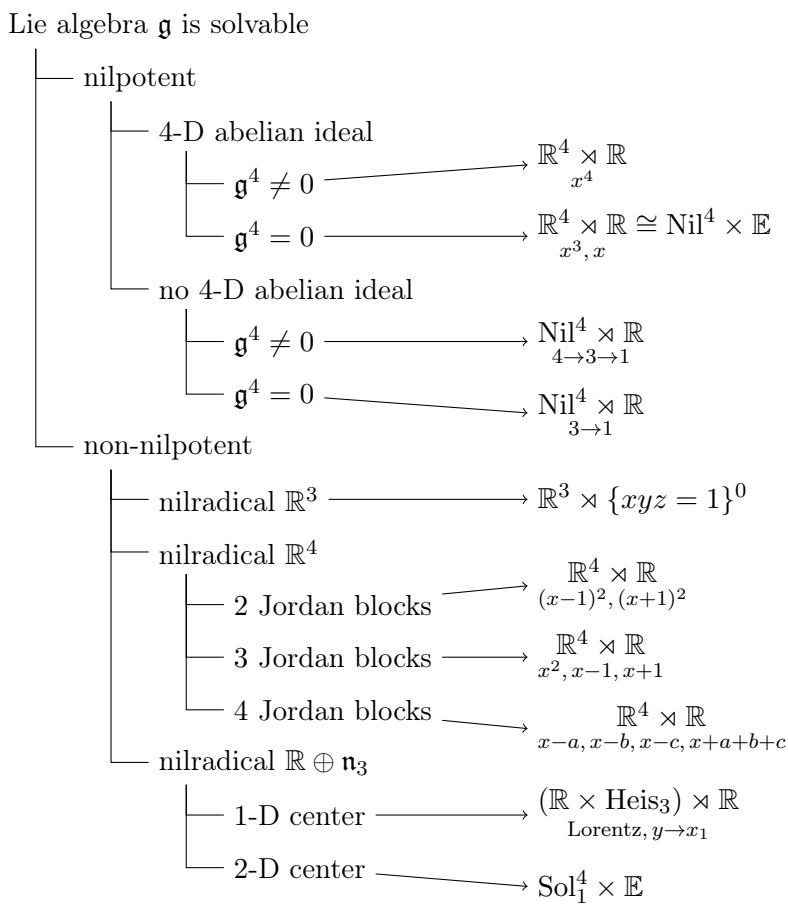
\begin{figure}[h!]
    \caption{Identification key for geometries $G = G/\{1\}$.}
    \label{fig:solvable_flowchart}
    \label{fig_thesis:solvable_flowchart}
    \begin{center}\begin{tikzpicture}[%
            grow via three points={one child at (1.0,-0.7) and
            two children at (1.0,-0.7) and (1.0,-1.4)},
            growth parent anchor=west,
            edge from parent path={($(\tikzparentnode.south west)+(0.5,0)$) |- (\tikzchildnode.west)}]
        \tikzset{
            every node/.style={anchor=west},
            final/.style={draw=none},
        }
        \node {Lie algebra $\lie{g}$ is solvable}
            child { node {nilpotent}
                child { node {$4$-D abelian ideal}
                    child { node (tnil5) {$\lie{g}^4 \neq 0$}}
                    child { node (tnil4xe) {$\lie{g}^4 = 0$}}
                }
                child [missing] {}
                child [missing] {}
                child { node {no $4$-D abelian ideal}
                    child { node (tnil431) {$\lie{g}^4 \neq 0$}}
                    child { node (tnil43) {$\lie{g}^4 = 0$}}
                }
                child [missing] {}
                child [missing] {}
            }
            child [missing] {}
            child [missing] {}
            child [missing] {}
            child [missing] {}
            child [missing] {}
            child [missing] {}
            child { node {non-nilpotent}
                child { node (tr3) {nilradical $\R^3$}}
                child { node {nilradical $\R^4$}
                    child { node (tj2) {$2$ Jordan blocks}}
                    child { node (tj3) {$3$ Jordan blocks}}
                    child { node (tj4) {$4$ Jordan blocks}}
                }
                child [missing] {}
                child [missing] {}
                child [missing] {}
                child { node {nilradical $\R \oplus \lie{n}_3$}
                    child { node (tz1) {$1$-D center}}
                    child { node (tz2) {$2$-D center}}
                }
            };
        \node[right=of tj4] (geoms) {};
        \draw[->] (tr3) -- (geoms |- tr3) node {$\R^3 \rtimes \{xyz=1\}^0$};
        \draw[->] (tnil5) -- ($(geoms |- tnil5)+(0,0.25)$) node {$\SemiR{\R^4}{x^4}$};
        \draw[->] (tnil4xe) -- (geoms |- tnil4xe) node {$\SemiR{\R^4}{x^3,\, x} \cong \Nil^4 \times \Euc$};
        \draw[->] (tnil431) -- (geoms |- tnil431) node {$\SemiR{\Nil^4}{4\to3\to1}$};
        \draw[->] (tnil43) -- ($(geoms |- tnil43)+(0,-0.25)$) node {$\SemiR{\Nil^4}{3\to1}$};
        \draw[->] (tj2) -- ($(geoms |- tj2)+(0,0.25)$) node {$\SemiR{\R^4}{(x-1)^2,\, (x+1)^2}$};
        \draw[->] (tj3) -- (geoms |- tj3) node {$\SemiR{\R^4}{x^2,\, x-1,\, x+1}$};
        \draw[->] (tj4) -- ($(geoms |- tj4)+(0,-0.25)$) node {$\SemiR{\R^4}{x-a,\, x-b,\, x-c,\, x+a+b+c}$};
        \draw[->] (tz1) -- (geoms |- tz1) node {$\SemiR{(\R \times \Heis_3)}{\text{Lorentz},\, y \to x_1}$};
        \draw[->] (tz2) -- ($(geoms |- tz2)+(0,-0.25)$) node {$\Sol^4_1 \times \Euc$};
    \end{tikzpicture}\end{center}
\end{figure}

\begin{rmk}
    When the group is a direct product with $\R$,
    the geometry is written as a product with $\Euc$,
    following Thurston's convention in \cite[Thm.~3.8.4]{thurstonbook}
    to highlight that the $\R$ factor behaves as a $1$-dimensional Euclidean space.

    Some of the $\R^4 \rtimes \R$ geometries
    can also be named as products
    of lower-dimensional geometries, using names from
    \cite[Table 1]{wall86}.
    If the polynomials are $x^3$ and $x$, then the Lie group is isomorphic
    to $\Nil^4 \times \R$.
    If there are $4$ polynomials and one of them is $x$,
    then the Lie group is isomorphic to $\Sol^4_{m,n} \times \R$.

    An alternative naming scheme is given in \cite[Table II]{patera},
    from Mubarakzyanov's classification \cite{muba_solvable5} of solvable Lie algebras;
    Table \ref{table:patera_concordance} shows the concordance.
\end{rmk}

\begin{table}[h!]
    \caption[Names from {\cite[Tables I, II]{patera}} for
        geometries with trivial isotropy.
    ]{Lie algebra names from \cite[Tables I, II]{patera} for
        geometries with trivial isotropy.}
    \label{table:patera_concordance}
    \begin{center}\begin{tabular}{cl}
        Geometry & Lie algebra \\
        \hline
        \rule[0pt]{0pt}{15pt}
        $\Nil^4 \times \Euc$
            & $A_{4,1} \oplus \R$ \\
        $\Sol^4_1 \times \Euc$
            & $A_{4,8} \oplus \R$ \\
        \rule[0pt]{0pt}{15pt}
        $\SemiR{\R^4}{x^4}$
            & $A_{5,2}$ \\
        \rule[0pt]{0pt}{17pt}
        $\SemiR{\Nil^4}{3\to1}$
            & $A_{5,5}$ \\
        \rule[0pt]{0pt}{15pt}
        $\SemiR{\Nil^4}{4\to3\to1}$
            & $A_{5,6}$ \\
        \rule[0pt]{0pt}{15pt}
        $\SemiR{\R^4}{x-a,\, x-b,\, x-c,\, x+a+b+c}$
            & $A^{a,b,c}_{5,7}$ ($a+b+c=-1$) \\
        \rule[0pt]{0pt}{17pt}
        $\SemiR{\R^4}{x^2,\, x-1,\, x+1}$
            & $A^{-1}_{5,8}$ \\
        \rule[0pt]{0pt}{15pt}
        $\SemiR{\R^4}{(x-1)^2,\, (x+1)^2}$
            & $A^{-1}_{5,15}$ \\
        \rule[0pt]{0pt}{15pt}
        $\SemiR{(\R \times \Heis_3)}{\text{Lorentz},\, y \to x_1}$
            & $A^{0}_{5,20}$ \\
        \rule[0pt]{0pt}{15pt}
        $\R^3 \rtimes \{xyz=1\}^0$
            & $A^{-1,-1}_{5,33}$ \\
    \end{tabular}\end{center}
\end{table}

Section \ref{chap:background3} collects some basic facts about geometries and homogeneous spaces.
Section \ref{chap:isotropy} classifies the isotropy representations (point stabilizers).
Sections \ref{chap:irr} and \ref{chap:sol} carry out the classification of geometries
when the isotropy is irreducible or trivial, respectively.

\section{General background: geometries}
\label{chap:background3}

This section recalls terms and basic facts about geometries,
including their interpretations as Riemannian manifolds or homogeneous spaces.
This starts with the definition of a geometry, following Thurston in
\cite[Defn.\ 3.8.1]{thurstonbook} and Filipkiewicz in \cite[\S 1.1]{filipk}.
\begin{defn}[\textbf{Geometries}] \label{defn:geometries} ~
    \begin{enumerate}[(i)]
        \item A \keyword{geometry} is a pair $(M,G \subseteq \op{Diff} M)$ where
            $M$ is a connected, simply-connected smooth manifold
            and $G$ is a connected Lie group
            acting transitively, smoothly, and with compact point stabilizers.
        \item $(M,G)$ is a \keyword{model geometry} if
            there is a finite-volume complete
            Riemannian manifold $N$ locally isometric
            to $M$ with some $G$-invariant metric. Such an $N$
            is said to be \keyword{modeled on} $(M,G)$.
        \item $(M,G)$ is \keyword{maximal} if there is no geometry $(M,H)$
            with $G \subsetneq H$.
            Any such $(M,H)$ is said to \keyword{subsume} $(M,G)$.
    \end{enumerate}
\end{defn}

As Thurston remarks in \cite[\S 3.8]{thurstonbook},
$M$ can be thought of as a Riemannian manifold as long as one
is willing to change the metric. Since the tools for proving
maximality are stated in the language of Riemannian geometry,
here is the explicit relationship with Riemannian manifolds.

\begin{prop}[\textbf{Existence of invariant metric}; see e.g.\ {\cite[Prop.\ 3.4.11]{thurstonbook}}]
    Suppose a Lie group $G$ acts transitively, smoothly, and
    with compact point stabilizers on a smooth manifold $M$. Then
    $M$ has a $G$-invariant Riemannian metric.
\end{prop}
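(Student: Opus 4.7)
The plan is the standard Haar-averaging argument. First I would fix a basepoint $p \in M$; by hypothesis the stabilizer $G_p$ is compact, so it carries a unique bi-invariant Haar probability measure $d\mu$. Choose any inner product $\langle \cdot,\cdot \rangle_0$ on the finite-dimensional real vector space $T_p M$, and define
\[
    \langle u, v \rangle_p := \int_{G_p} \langle (dh)_p u,\, (dh)_p v \rangle_0 \, d\mu(h).
\]
Left-invariance of $d\mu$ makes this a $G_p$-invariant inner product on $T_p M$.

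Next I would propagate $\langle \cdot,\cdot \rangle_p$ to every tangent space using the $G$-action. For $q \in M$, use transitivity to write $q = g \cdot p$ for some $g \in G$, and set $\langle u, v \rangle_q := \langle (dg^{-1})_q u,\, (dg^{-1})_q v \rangle_p$. Any other choice of $g$ differs by right-multiplication by some $h \in G_p$, and the $G_p$-invariance from the previous step is precisely what is needed to make this definition independent of that choice. The resulting tensor field is $G$-invariant by construction.

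The only step requiring genuine care is smoothness. The orbit map $G \to M$, $g \mapsto g \cdot p$, is a principal $G_p$-bundle, so it admits smooth local sections; composing such a section with the transport formula above expresses the metric tensor in local coordinates as a smooth function of position, with smooth dependence inside the Haar integral passing through by differentiation under the integral sign on the compact domain $G_p$. Both ingredients are entirely standard, so there is no substantive obstacle — the proposition is essentially a packaging of well-known Lie-theoretic facts, which is why it is cited rather than proven in full in references such as Thurston's book.
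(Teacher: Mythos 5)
Your proof is correct and is exactly the standard averaging argument that the cited reference (Thurston, Prop.\ 3.4.11) uses; the paper itself gives no proof, deferring to that citation. One small nicety: the invariance of the averaged inner product under $h_0 \in G_p$ uses right-invariance of the Haar measure (substituting $h \mapsto hh_0$), which is covered since you correctly invoked bi-invariance on the compact group $G_p$.
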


This makes $G$ a subgroup of the isometry group.
The next Proposition asserts this inclusion is an equality for maximal geometries,
which makes it possible to approach questions of maximality
by leveraging existing results describing isometry groups of various spaces.

\begin{prop}[\textbf{Description of maximality}, {\cite[Prop.\ 1.1.2]{filipk}}] \label{prop:maximality_with_metric} ~
    \begin{enumerate}[(i)]
        \item Every geometry is maximal or subsumed by a maximal geometry.
        \item If $(M,G)$ is a maximal geometry, then $G$ is the identity component
            $(\op{Isom} M)^0$ of $\op{Isom} M$ in any $G$-invariant metric on $M$.
    \end{enumerate}
\end{prop}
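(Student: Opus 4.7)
The plan is to handle the two parts of the proposition in sequence.

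For part (i), the natural approach is Zorn's lemma applied to the poset of geometries $(M,H)$ with $G \subseteq H \subseteq \op{Diff} M$, partially ordered by inclusion. The key input is a uniform bound on $\dim H$ depending only on $n = \dim M$. The preceding existence proposition furnishes each candidate $H$ with an $H$-invariant Riemannian metric, so that $H$ acts by isometries; then $H_p$ acts faithfully on $T_pM$ via an orthogonal representation, giving $\dim H_p \leq \binom{n}{2}$, and transitivity yields $\dim H \leq n + \binom{n}{2}$. Consequently, along any chain $\{H_\alpha\}$ the dimensions are bounded integers, and the supremum is achieved by some $H_{\alpha_0}$. For any $H_\beta \supseteq H_{\alpha_0}$ in the chain, equality of dimensions forces equality at the Lie algebra level and hence at the group level (both are the connected Lie subgroup of $\op{Diff} M$ generated by the same Lie algebra of complete vector fields on $M$). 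Thus every chain attains a maximum, and Zorn produces a maximal geometry containing $(M,G)$.

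For part (ii), fix any $G$-invariant metric $g$ on $M$. Then $G \subseteq \op{Isom}(M,g) =: I$, and connectivity forces $G \subseteq I^0$. By Myers--Steenrod, $I$ is a Lie group acting smoothly and properly on $M$, so its point stabilizers are compact. Since $I^0 \supseteq G$ is already transitive, $(M, I^0)$ is itself a geometry containing $(M,G)$, and maximality then forces $G = I^0$.

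I expect the main obstacle to lie in the Zorn step for (i). The dimension bound, while essential, is not quite the whole story: one still needs the assertion that a proper inclusion of connected Lie subgroups of $\op{Diff} M$ with equal dimension is impossible. This rests on the principle that a connected Lie subgroup is determined by its Lie algebra, which in this setting means viewing each $H$ via its infinitesimal action as a finite-dimensional Lie algebra of complete vector fields on $M$; with that identification in hand, equality of Lie algebras immediately yields equality of groups. Part (ii) is then a direct appeal to Myers--Steenrod and to the maximality hypothesis.
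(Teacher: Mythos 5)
Your proof is correct. The paper does not actually prove this proposition --- it is imported from Filipkiewicz with a citation --- but your argument is the standard one behind that reference: part (ii) via Myers--Steenrod together with compactness of stabilizers (which follows since a homogeneous Riemannian metric is complete and the isometry group then acts properly), and part (i) via the bound $\dim H \leq n + \binom{n}{2}$ combined with the fact that a proper inclusion of connected Lie subgroups of $\op{Diff} M$ forces a strict increase in dimension. The only cosmetic difference from the usual presentation is your use of Zorn's lemma where a finite induction on dimension suffices (repeatedly replace $G$ by $(\op{Isom} M)^0$ for a $G$-invariant metric; each non-maximal step strictly increases the bounded integer $\dim G$), and you correctly flag the one genuinely delicate point, namely that equality of dimensions collapses the inclusion because each group is recovered as the connected subgroup generated by its Lie algebra of complete Killing fields.
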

\begin{rmk} \label{rmk:maximality_by_isotropy}
    By connectedness, if $(M,G)$ is realized by $(M,H)$ with $G \subsetneq H$,
    then $\dim G < \dim H$ and $\dim G_p < \dim H_p$. Hence to verify
    maximality of a geometry $(M,G)$, it suffices to distinguish $(M,G)$
    from the geometries whose point stabilizers contain $G_p$.
    Knowing inclusions between possible point stabilizers
    (Figure \ref{fig:isotropy_poset}) will facilitate this.
\end{rmk}

The interpretation of geometries as homogeneous spaces is already
integral to the $4$-dimensional classification by Filipkiewicz,
and some geometries are most concisely expressed as homogeneous spaces.
So the dictionary is provided explicitly below, with an outline of the proof
and prerequisites in case the reader requires details.
\begin{prop}[\textbf{Geometries described as homogeneous spaces}] \label{prop:geometries3} ~
    \begin{enumerate}[(i)]
        \item Geometries $(M,G)$ correspond one-to-one
            with simply-connected homogeneous spaces $G/G_p$ where $G$ is a connected
            Lie group and $G_p$ is compact and contains no
            nontrivial normal subgroups of $G$.
            The correspondence is $G/G_p \leftrightarrow (G/G_p, G)$.
        \item $G/G_p$ is a model geometry if and only if
            some lattice $\Gamma \subset G$ intersects no conjugate of $G_p$
            nontrivially.
        \item $G/G_p$ is a maximal geometry if and only if
            it is not $G$-equivariantly diffeomorphic to a geometry
            $G'/G'_p$ with $G \subsetneq G'$.
    \end{enumerate}
\end{prop}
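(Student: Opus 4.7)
The plan for (i) is the standard orbit-stabilizer argument. Given a geometry $(M,G)$, I would fix a basepoint $p \in M$ and use the orbit map to obtain a $G$-equivariant diffeomorphism $M \cong G/G_p$; compactness of $G_p$ is part of Defn.~\ref{defn:geometries}. The ``no nontrivial normal subgroup'' condition falls out of faithfulness of $G \subseteq \op{Diff}(M)$: any normal subgroup $N$ of $G$ contained in $G_p$ satisfies $N = gNg^{-1} \subseteq G_{gp}$ for every $g$, hence fixes every point of $M$, hence is trivial. The converse direction is immediate because the kernel of the $G$-action on $G/G_p$ is precisely the largest normal subgroup of $G$ contained in $G_p$, and the smooth manifold structure on $G/G_p$ together with transitivity and compactness of $G_p$ is standard.

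For (ii), I would invoke the standard fact that any complete Riemannian manifold locally isometric to the simply-connected $M = G/G_p$ is of the form $\Gamma \backslash M$ for a discrete $\Gamma \subseteq G$ acting freely and properly discontinuously by isometries. Freeness translates directly to the condition that $\Gamma$ intersect no conjugate of $G_p$ nontrivially, since the stabilizer of $gp$ in $G$ is $gG_pg^{-1}$. For volume, compactness of $G_p$ lets the Haar measure on $G$ descend to a $G$-invariant measure on $M$ such that $\op{vol}(\Gamma \backslash M)$ is proportional to the Haar measure of $\Gamma \backslash G$, so finite volume of the quotient manifold is equivalent to $\Gamma$ being a lattice in $G$.

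For (iii), only the forward direction requires content. If $G/G_p$ is $G$-equivariantly diffeomorphic to $G'/G'_p$ with $G \subsetneq G'$, I would transport the $G'$-action along the diffeomorphism to get an action of $G'$ on $G/G_p$ extending the $G$-action, exhibiting $(G/G_p, G')$ as a geometry properly subsuming $(G/G_p, G)$. Conversely, if $(M, H)$ subsumes $(M, G)$, then applying part (i) to the $H$-action yields the required $G$-equivariant identification $M \cong H/H_q$ with $G \subsetneq H$.

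I expect the only real subtlety to be the measure-theoretic step in (ii), which requires a Weil-type integration formula together with compactness of $G_p$ to reconcile $G$-invariant volume on $\Gamma \backslash M$ with Haar volume on $\Gamma \backslash G$. The rest is bookkeeping with orbit-stabilizer, faithfulness, and---in the case of maximality---the realization via $\op{Isom}(M)$ supplied by Prop.~\ref{prop:maximality_with_metric}.
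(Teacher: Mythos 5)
Your proposal is correct and follows essentially the same route as the paper: orbit--stabilizer plus the faithfulness/normal-subgroup equivalence for (i), the correspondence between free lattice actions and finite-volume complete quotients (with the fiber-integration comparison of $\op{vol}(\Gamma\backslash M)$ and $\op{vol}(\Gamma\backslash G)$) for (ii), and unwinding the definition via (i) for (iii). The only detail the paper makes explicit that you gloss over is that every manifold quotient $\Gamma\backslash M$ is automatically complete, which is needed for the ``if'' direction of (ii); this is a cited standard fact, not a gap in the argument.
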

\begin{proof}
    Part (i) follows from two standard facts:
    \begin{enumerate}
        \item If a Lie group $G$ acts transitively on a smooth manifold $M$
            with subgroup $G_p$ stabilizing $p \in M$,
            then $G/G_p$ with its natural smooth structure is $G$-equivariantly
            diffeomorphic to $M$ {\cite[p.\ 1]{filipk}}.
            (See also {\cite[Thm.\ II.3.2, Prop.\ II.4.3(a)]{helgasonnew}},
            {\cite[Thm.\ II.1.1.2]{onishchik1}}, or {\cite[1.5.9]{wolf}}.)
        \item A group $G$ acts faithfully on the coset space $G/G_p$
            (i.e.\ is a subgroup of $\op{Diff}(G/G_p)$,
            as opposed to merely surjecting onto one)
            if and only if $G_p$ contains no nontrivial normal subgroups of $G$.
            (Proof: The kernel of $G \to \op{Diff}(G/G_p)$ is normal and
            contained in $G_p$ since it fixes the identity coset. Conversely,
            if $N \subseteq G_p$ is normal in $G$, then $G \to \op{Diff}(G/G_p)$
            factors through $G/N$.)
    \end{enumerate}

    Part (ii) is the following argument distilled from \cite[\S 3.3--3.4]{thurstonbook}.
    Fix any $G$-invariant Riemannian metric on $M = G/G_p$.
    Discrete subgroups $\Gamma$ correspond
    to orbifolds $N = \Gamma \backslash M$ covered by $M$.
    Such $N$ naturally inherits a Riemannian metric (i.e.\ is a manifold)
    if and only if $\Gamma$ acts freely on $M$,
    i.e.\ $\Gamma$ intersects no point stabilizer nontrivially.
    Since $G$ acts transitively on $M$ with compact point stabilizers,
    \begin{enumerate}
        \item every manifold quotient $\Gamma \backslash M$ is complete
            {\cite[Cor.\ 3.5.12 and Prop.\ 3.4.15]{thurstonbook}};
        \item every complete Riemannian manifold $N$ locally isometric to
            $M$ is isometric to a quotient space $\Gamma \backslash M$
            {\cite[Prop.\ 3.4.5 and 3.4.15]{thurstonbook}}; and
        \item $\Gamma \backslash M$ has finite volume if and only if
            $\Gamma \backslash G$ does (i.e.\ $\Gamma$ is a lattice).
                (Proof: Let $\omega$ be a left-invariant volume form on $G$.
                Cosets of $G_p$ are compact, so one may integrate along them, recovering
                the invariant volume form on $M$ up to a scale factor. Then
                \(\op{vol}(\Gamma \backslash M)\)
                is a multiple of \(\op{vol}(\Gamma \backslash G)\).)
    \end{enumerate}

    Part (iii) is merely the original definition
    (\ref{defn:geometries}(iii)) rephrased in terms of the
    correspondence from part (i).
\end{proof}

\section{Classification of isotropy representations (point stabilizers)}
\label{chap:isotropy}

This section presents the first step in the strategy of Thurston and Filipkiewicz,
which is to classify the linear isotropy representations---representations
of point stabilizers $G_p$ on tangent spaces $T_p M$.

Since an isometry of a connected Riemannian manifold
is determined by its value and derivative at a point
(see e.g.\ \cite[Prop.~A.2.1]{bp}), 
any such action is faithful and preserves the Riemannian metric on $T_p M$.
Since $G_p$ is connected
by a homotopy exact sequence calculation \cite[Prop.~1.1.1]{filipk}
and compact,
classifying representations $G_p \curvearrowright T_p M$
is equivalent to classifying closed connected subgroups $G_p \subseteq \op{SO}(5)$,
up to conjugacy in $\op{GL}(5,\R)$. The classification is as follows,
summarized in Figure \ref{fig:isotropy_poset}.\footnote{
    Proofs of the inclusions are omitted;
    all are readily guessed except for
    $S^1_{1/2} \subset \op{SO}(3)_5$.
    This last inclusion can be seen from the description of $\op{SO}(3)_5$
    as $\op{SO}(3)$ acting by conjugation on traceless symmetric
    bilinear forms---a $90$ degree rotation in the first
    two coordinates has order $4$ but acts with order $2$
    on the diagonal matrix $d(1,-1,0)$.
}

\begin{prop}[\textbf{Classification of isotropy representations}] \label{isotropy_classification}
    The closed connected subgroups of $\op{SO}(5)$ are,
    up to conjugacy in $\op{GL}(5, \R)$,
    \begin{itemize}
        \item one of the following groups, acting by its standard representation over $\R$
            on a subspace of $\R^5$ and trivially on the orthogonal complement;
            \begin{align*}
                &{}\{1\}  &
                &{}\op{SO}(2)  &
                &{}\op{SO}(3)  &
                &{}\op{SO}(4)  &
                &{}\op{SO}(5)  &
                &{}\op{SO}(2) \times \op{SO}(2)  &
                &{}\op{SO}(2) \times \op{SO}(3)
            \end{align*}
        \item one of $\op{SU}(2)$ or $\op{U}(2)$ acting by its standard representation
            on $\C^2 \cong \R^4 \subset \R^5$;
        \item $\op{SO}(3)_5$, a copy of $\op{SO}(3)$ acting irreducibly on $\R^5$; or
        \item $S^1_{m/n}$ $(0 \leq \frac{m}{n} \leq 1)$,
            the 1-parameter subgroup of $\op{SO}(2) \times \op{SO}(2)$ defined by
            \[ S^1 \ni z \mapsto (z^m, z^n) \in S^1 \times S^1 \cong \op{SO}(2) \times \op{SO}(2). \]
    \end{itemize}
\end{prop}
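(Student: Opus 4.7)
Let $H \subseteq \op{SO}(5)$ be closed and connected; by compactness, the standard representation on $\R^5$ decomposes orthogonally into $H$-irreducible summands whose dimensions partition $5$. The plan is to (a) classify the closed connected subgroups of $\op{SO}(d)$ that act irreducibly on $\R^d$ for $d \leq 5$; (b) combine these according to partitions of $5$; (c) handle the diagonal subgroups that arise when two summands carry isomorphic $H$-representations; and (d) normalize by conjugation in $\op{GL}(5,\R)$ and check that the resulting list has no redundancies.

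For step (a), in dimensions $2$ and $3$ only $\op{SO}(2)$ and $\op{SO}(3)$ act irreducibly, since their proper closed connected subgroups have dimension at most $1$ and fix a line. On $\R^4$, the irreducibly-acting subgroups up to conjugacy in $\op{GL}(4,\R)$ are $\op{SO}(4)$, $\op{U}(2)$, and $\op{SU}(2)$; on $\R^5$ they are $\op{SO}(5)$ and the irreducible $\op{SO}(3)_5$ realized on traceless symmetric $3 \times 3$ matrices. The main obstacle is verifying these short lists are complete. I would do so by enumerating the compact simple Lie algebras of dimension at most $\dim \op{SO}(d)$ (i.e.\ $\lie{so}(2), \lie{so}(3), \lie{so}(4), \lie{so}(5)$ and their ideals) and checking via highest-weight data which admit a faithful real irreducible representation of the required dimension; equivalently, one may appeal to the Montgomery--Samelson--Borel classification of compact groups acting transitively on spheres, which in the relevant range returns exactly these subgroups.

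For step (b), I run through the partitions $5$, $4{+}1$, $3{+}2$, $3{+}1{+}1$, $2{+}2{+}1$, $2{+}1{+}1{+}1$, $1{+}1{+}1{+}1{+}1$. Each non-trivial summand is acted on by one of the groups from (a), trivial summands contribute nothing, and conjugation in $\op{GL}(5,\R)$ lets me place each summand in a standard coordinate subspace. Step (c) addresses the partition $2{+}2{+}1$: if both $2$-dimensional summands carry isomorphic $\op{SO}(2)$-representations, then $H$ need not be a direct product but may be a closed connected subgroup of the $2$-torus $\op{SO}(2) \times \op{SO}(2)$. Such subgroups are the trivial group, each coordinate circle, the full torus, or a $1$-parameter subgroup whose slope in the Lie algebra must be rational for closedness; after swapping factors and rescaling, the diagonal circles are parametrized exactly by $S^1_{m/n}$ with $0 \leq m/n \leq 1$ and $\gcd(m,n) = 1$.

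For step (d), I check pairwise non-conjugacy in $\op{GL}(5,\R)$ of the candidates using invariants: total dimension; the partition of $\R^5$ into $H$-irreducibles with multiplicities; the presence or absence of a preserved complex or quaternionic structure on a $4$-dimensional summand (distinguishing $\op{SO}(4)$, $\op{U}(2)$, and $\op{SU}(2)$); and the slope parameter $m/n$ for the toric family. Since each listed group is uniquely identified by these invariants and since every $H$ falls into one of the cases above, the list is complete.
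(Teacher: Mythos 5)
Your proposal is essentially correct but organized differently from the paper. You decompose the representation first (partitioning $\R^5$ into $H$-irreducibles) and then classify the irreducibly-acting subgroups of each $\op{SO}(d)$, $d \le 5$; the paper instead classifies the group first, showing via the structure theory of compact Lie groups and a dimension count that any proper closed connected subgroup is finitely covered by a product of at most two factors from $\{S^1, \op{SU}(2)\}$ (Prop.~\ref{covering}), and only then enumerates the possible actions case by case. Both routes bottom out in the same inputs---the real irreducible representations of $S^1$ and $\op{SU}(2)$ and the rank-$2$ constraint---and your step (c) on closed connected subgroups of the $2$-torus reproduces the paper's Cases 2--3. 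Your route makes the completeness of the final list more transparent (each subgroup is pinned down by its isotypic decomposition), at the cost of having to justify, for each multi-summand partition, why $H$ equals the full product of its images rather than a proper subgraph; you do this for $2{+}2{+}1$ but should add the one-line Goursat argument for $3{+}2$ (since $\lie{so}(3)$ is perfect and $\R$ is abelian, a connected subgroup of $\op{SO}(3)\times\op{SO}(2)$ surjecting onto both factors is the whole group).

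Two points need repair. First, your claimed ``equivalent'' justification of step (a) via the Montgomery--Samelson--Borel classification of compact groups transitive on spheres is wrong: irreducibility on $\R^d$ does not imply transitivity on $S^{d-1}$, and $\op{SO}(3)_5$ is exactly the counterexample---it acts irreducibly on $\R^5$ but cannot act transitively on $S^4$ for dimension reasons, and the only compact connected group transitive on $S^4$ is $\op{SO}(5)$. Relying on that classification would silently delete $\op{SO}(3)_5$ from your list; stick with your first method (highest-weight data for the compact simple algebras that fit dimensionally, which here reduces to the real forms of $\op{Sym}^n V$ for $\op{SU}(2)$, as in Prop.~\ref{reprs_su2}). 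Second, your framing of step (c) is off: for $S^1_{m/n}$ with $m \ne n$ the two $2$-dimensional summands are \emph{non-isomorphic} as $H$-representations, so ``isomorphic summands'' is not the trigger for the diagonal phenomenon. What matters is only that $H$ is a closed connected subgroup of $\op{SO}(2)\times\op{SO}(2)$ surjecting onto each factor; your subsequent enumeration of such subgroups is correct, so this is a matter of wording rather than substance.
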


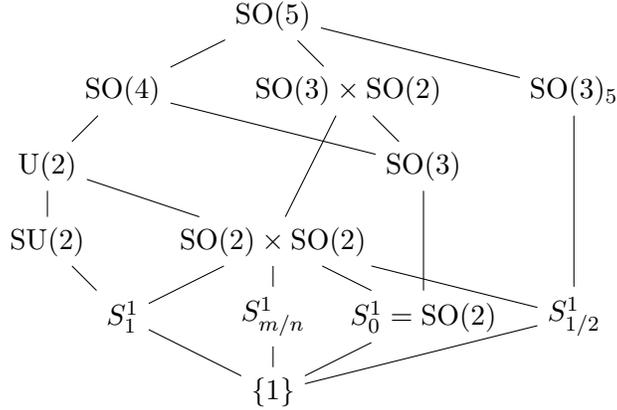
\begin{figure}[h!]
    \caption{Closed connected subgroups of $\op{SO}(5)$, with inclusions.}
    \label{fig:isotropy_poset}
    \label{fig_thesis:isotropy_poset}
    \begin{center}\begin{tikzpicture}
        \draw (0, 5) node (so5) {$\op{SO}(5)$};
        \draw (-2, 4) node (so4) {$\op{SO}(4)$};
        \draw (1, 4) node (so3so2) {$\op{SO}(3) \times \op{SO}(2)$};
        \draw (4, 4) node (so35) {$\op{SO}(3)_5$};

        \draw (-3, 3) node (u2) {$\op{U}(2)$};
        \draw (-3, 2) node (su2) {$\op{SU}(2)$};

        \draw (2, 3) node (so3) {$\op{SO}(3)$};
        \draw (0, 2) node (t2) {$\op{SO}(2) \times \op{SO}(2)$};

        \draw (-2, 1) node (s11) {$S^1_1$};
        \draw (0, 1) node (s1q) {$S^1_{m/n}$};
        \draw (2, 1) node (so2) {$S^1_0 = \op{SO}(2)$};
        \draw (4, 1) node (s12) {$S^1_{1/2}$};

        \draw (0, 0) node (triv) {$\{1\}$};

        \draw (so5) -- (so4) -- (u2) -- (su2) -- (s11) -- (triv);
        \draw (so5) -- (so3so2) -- (so3) -- (so2) -- (triv);
        \draw (so5) -- (so35) -- (s12) -- (triv);
        \draw (so4) -- (so3);
        \draw (so3so2) -- (t2) -- (so2);
        \draw (u2) -- (t2) -- (s11);
        \draw (t2) -- (s1q) -- (triv);
        \draw (t2) -- (s12);
    \end{tikzpicture}\end{center}
\end{figure}

\subsection{Proof of the classification of isotropy representations}
\label{sec:isotropy_proof}

The proof presented below is similar in spirit to that of \cite[\S 1.2]{filipk},
albeit with somewhat more representation-theoretic language.
In higher dimensions, it would be advantageous to use an alternative
strategy that provided more structure and fewer opportunities for human error.
One such approach, to list maximal subgroups recursively
using results by Dynkin, is outlined in
\cite[Tables 1, 3]{kerrkollross} for several groups including
$\op{SO}(5)$ and $\op{SO}(6)$.

In dimension $5$, the groups involved are small enough that only
representations of $S^1$ and $\op{SU}(2)$ really need to be understood;
this permits a slightly more elementary approach, using some standard
facts about representations of low-dimensional groups.
These will be stated where used, with proofs deferred
to Section \ref{sec:low_d_reprs}
so that they are available but not a source of clutter.

The first step is the classification up to finite covers
by using the classification of compact simple Lie groups.
\begin{prop} \label{covering}
    Any closed connected proper subgroup $K \subset \op{SO}(5)$ is finitely covered
    by a product of at most two factors, each of which is $S^1$ or $\op{SU}(2)$.
\end{prop}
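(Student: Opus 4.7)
The plan is to invoke the structure theorem for compact connected Lie groups. As $K$ is a closed subgroup of the compact Lie group $\op{SO}(5)$, it is itself compact and connected, so there is a finite covering
\[ \tilde K \cong T^k \times S_1 \times \cdots \times S_n \twoheadrightarrow K, \]
where $T^k$ is a torus and each $S_i$ is a simply-connected compact simple Lie group. The proposition will then follow by showing that the right-hand side has at most two nontrivial factors and that every simple $S_i$ is isomorphic to $\op{SU}(2)$.

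The first reduction is a rank bound. Any maximal torus of $K$ is conjugate into a maximal torus of $\op{SO}(5)$, so
\[ k + \sum_i \op{rank}(S_i) \;=\; \op{rank}(K) \;\leq\; \op{rank}(\op{SO}(5)) \;=\; 2. \]
Since each simple factor contributes at least $1$ to the rank, the total number of $T^k$ and $S_i$ factors is at most two once we rule out any $S_i$ of rank $\geq 2$.

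The main step is therefore to exclude the simply-connected compact simple Lie groups of rank $2$, namely $\op{SU}(3)$, $\op{Sp}(2) \cong \op{Spin}(5)$, and $G_2$. For $G_2$ this is immediate: $\dim G_2 = 14 > 10 = \dim \op{SO}(5)$. For $\op{Spin}(5)$, any homomorphism into $\op{SO}(5)$ has finite kernel (contained in the center $\{\pm 1\}$, since $\op{Spin}(5)$ is simple) and hence $10$-dimensional image; by dimension and connectedness the image is all of $\op{SO}(5)$, so such a $K$ is not proper. The most delicate case is $\op{SU}(3)$: its complex irreducible representations of smallest positive dimension (the standard $\C^3$ and its dual) are not self-conjugate, so they carry no real structure, and their realifications have real dimension $6$. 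Hence $\op{SU}(3)$ admits no faithful real representation of dimension $\leq 5$, and in particular does not embed in $\op{SO}(5)$. I expect this last representation-theoretic point will be the main obstacle, and it is presumably the kind of input the author has gathered in the deferred Section~\ref{sec:low_d_reprs}.

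With rank-$\geq 2$ simple factors excluded, each $S_i$ equals $\op{SU}(2)$ and the possibilities for $\tilde K$ are exhausted by $\{1\}$, $S^1$, $\op{SU}(2)$, $S^1 \times S^1$, $S^1 \times \op{SU}(2)$, and $\op{SU}(2) \times \op{SU}(2)$, each a product of at most two copies of $S^1$ or $\op{SU}(2)$, as claimed.
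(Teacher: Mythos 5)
Your proposal is correct and follows essentially the same route as the paper: the structure theorem for compact connected Lie groups, the rank bound $\op{rank}(\op{SO}(5))=2$ to limit the number of factors, dimension counts to exclude most simple factors, and the representation-theoretic fact that $\op{SU}(3)$ has no nontrivial real representation of dimension $\leq 5$ (which is exactly the paper's Prop.~\ref{nosu3}, deferred to Section~\ref{sec:low_d_reprs}). The only cosmetic difference is the order of the reductions: the paper first uses $\dim \op{SO}(5)=10$ to cut the list of possible simple factors down to $\op{Sp}(1)$ and $\op{SU}(3)$ before invoking the rank bound, whereas you lead with the rank bound and then dispose of the rank-$2$ simple groups individually.
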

\begin{proof}
    Every compact connected Lie group is finitely covered by a product
    of circles $S^1$ and compact simply-connected simple Lie groups
    \cite[Thm.~V.8.1]{brockerdieck}.
    By the classification of compact connected simple Lie groups
    \cite[Ch. X, \S{}6 (p. 516)]{helgasonnew},
    the simple factors could be
    \begin{itemize}
        \item $\op{Sp}(n)$, $n \geq 1$ (dim. $n(2n+1)$)
        \item $\op{SU}(n)$, $n \geq 3$ (dim. $n^2 - 1$)
        \item $\op{Spin}(n)$, $n \geq 7$ (dim. $\binom{n}{2}$)
        \item $G_2$ (dim. $14$), $F_4$ (dim. $52$), $E_6$ (dim $78$),
            $E_7$ (dim. $133$), or $E_8$ (dim. $248$).
    \end{itemize}
    The dimension of $\op{SO}(5)$ is $10$; so only $\op{Sp}(1) \cong \op{SU}(2)$
    and $\op{SU}(3)$ could be simple factors
    of a cover of a proper subgroup of $\op{SO}(5)$.
    Moreover, $\op{SU}(3)$ is not a factor since it has no
    nontrivial $5$-dimensional representations (Prop.~\ref{nosu3}).

    Hence $K$ is finitely covered by a product whose factors
    are $S^1$ or $\op{SU}(2)$.
    Since $\op{SO}(5)$ has rank $2$, such a product has at most two factors.
\end{proof}

At this stage, the full classification (Prop.~\ref{isotropy_classification})
reduces to listing low-dimensional representations of $S^1$ and $\op{SU}(2)$.
\begin{proof}[Proof of Prop.~\ref{isotropy_classification}
    (Classification of closed connected subgroups $K \subseteq \op{SO}(5)$)]
    By Prop.~\ref{covering} above,
    a closed connected subgroup $K \subseteq \op{SO}(5)$ either is $\op{SO}(5)$
    or has a finite cover $\tilde{K} \cong K_1 \times K_2$, where
    $K_1$ and $K_2$ are $S^1$ or $\op{SU}(2)$.
    When $\tilde{K}$ is a product, there are the following cases.

    \paragraph{Case 1: $\tilde{K} \cong \{1\}$.} Then $K = \{1\}$.
    \paragraph{Case 2: $\tilde{K} \cong S^1$.} Then $K$ is the image
            of some homomorphism  $f: \R \to \op{SO}(5)$. The eigenvalues of
            $f(t) \in \op{SO}(5)$ are $1$ and two pairs of complex
            conjugates of norm $1$, with homomorphic dependence on $t$;
            so there is some decomposition of $\R^5$
            as $\C \times \C \times \R$ in which
                \[ f(t)
                    = (e^{xt}, e^{yt}, 1)
                    \in \op{U}(1) \times \op{U}(1) \times \op{SO}(1) . \]
            Since the image is closed, $x$ and $y$ must be rationally
            dependent; so $K = S^1_{m/n}$ where either $\frac{x}{y}$
            or $\frac{y}{x}$ is equal to $\frac{m}{n}$.
    \paragraph{Case 3: $\tilde{K} \cong S^1 \times S^1$.}
            Since the two $S^1$
            factors commute, they share eigenspaces. Using
            $\R^5 \cong \C \times \C \times \R$ as above, $K$ must be
            a $2$-dimensional subgroup of
            $\op{U}(1) \times \op{U}(1) \times \op{SO}(1)
            \cong \op{SO}(2) \times \op{SO}(2)$.
    \paragraph{Case 4: $\tilde{K} \cong \op{SU}(2)$.}
            The classification
            of irreducible representations of $\op{SU}(2)$ (Prop.~\ref{reprs_su2})
            implies that $K$ is one of $\op{SU}(2)$ acting on $\C^2$,
            $\op{SO}(3)$ acting on $\R^3$, and $\op{SO}(3)$ acting irreducibly on $\R^5$.
    \paragraph{Case 5: $\tilde{K} \cong \op{SU}(2) \times S^1$.}
            This
            $\op{SU}(2)$ cannot act irreducibly on $\R^5$ since that
            action has trivial centralizer (Cor.~\ref{reprs_so3}).
            Hence the $\op{SU}(2)$ factor acts as either
            $\op{SO}(3)$ on $\R^3$ or $\op{SU}(2)$ on $\C^2$.
            \begin{itemize}
                \item If $\op{SU}(2)$ acts as $\op{SO}(3)$ on some $\R^3 \subset \R^5$,
                    then the $S^1$ factor preserves the decomposition
                    $\R^5 \cong_{\op{SO}(3)} \R^3 \oplus 2\R$ since it commutes with $\op{SO}(3)$.
                    So $K \subseteq \op{S}(\op{O}(3) \times \op{O}(2))$,
                    and by connectivity $K \subseteq \op{SO}(3) \times \op{SO}(2)$.
                    Equality holds since the dimensions match.
                \item If $\op{SU}(2)$ acts as $\op{SU}(2)$ on some $\C^2 \subset \R^5$,
                    then as above
                    $K \subseteq \op{SO}(4) \times \op{SO}(1) \cong \op{SO}(4)$.
                    Since $\op{SO}(4)$ is covered by $\op{SU}(2) \times \op{SU}(2)$,
                    the $S^1$ factor can be taken to lie in the second $\op{SU}(2)$
                    as a $1$-parameter subgroup---all of which are conjugate.
                    Thus there is only one action of
                    $\op{SU}(2) \times S^1$ on $\R^4$ up to conjugacy.
                    It can be realized as the action by $\op{SU}(2)$ on $\C^2$
                    along with the action of the unit-norm scalars,
                    which amounts to $\op{U}(2)$.
            \end{itemize}
    \paragraph{Case 6: $\tilde{K} \cong \op{SU}(2) \times \op{SU}(2)$.}
            As in Case 5 above, $K \subseteq \op{SO}(4)$. Since
            the dimensions match, $K = \op{SO}(4)$.
\end{proof}

\subsection{Standard facts about representations of lower-dimensional groups}
\label{sec:low_d_reprs}

This subsection collects some basic facts about representations of
$\op{SU}(2)$, $\op{SU}(3)$, and $\op{SO}(3)$.
These are only needed for the above classification of isotropy representations
(Prop.~\ref{isotropy_classification}),
so a reader familiar with these groups may wish to skip this subsection.

\begin{prop} \label{nosu3}
    Representations of $\op{SU}(3)$ of dimension less than $6$ over $\R$ are trivial.
\end{prop}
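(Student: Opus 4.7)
The plan is to complexify and appeal to the classification of irreducible complex representations of $\op{SU}(3)$ (equivalently, of $\lie{sl}(3,\C)$) by highest weight. First I would reduce to the case of an irreducible real representation $V$, since a faithful subrepresentation of a trivial representation is trivial and a direct sum of trivial representations is trivial. Then I would pass to the complexification $V_\C = V \otimes_\R \C$, which is a complex representation of $\op{SU}(3)$ of complex dimension equal to $\dim_\R V < 6$.

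Next I would invoke the highest-weight classification: the irreducible complex representations of $\op{SU}(3)$ are indexed by pairs $(a,b)$ of non-negative integers with dimension $\tfrac{1}{2}(a+1)(b+1)(a+b+2)$. Enumerating small values, the only irreducibles of complex dimension strictly less than $6$ are the trivial representation (dim $1$, $(0,0)$) and the standard representation $\C^3$ together with its dual $\overline{\C^3}$ (dim $3$, weights $(1,0)$ and $(0,1)$); the next one is the symmetric square of dimension $6$. Hence every irreducible constituent of $V_\C$ is trivial, $\C^3$, or $\overline{\C^3}$.

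The key observation is that $\C^3$ and $\overline{\C^3}$ are not isomorphic as $\op{SU}(3)$-representations—their highest weights differ—so $\C^3$ admits no $\op{SU}(3)$-invariant real structure. Since $V_\C$ arises by complexification of a real representation, it carries a conjugate-linear $\op{SU}(3)$-equivariant involution, so its irreducible constituents come in conjugate pairs (with self-conjugate constituents counted separately). If $V_\C$ contained a copy of $\C^3$, it would also have to contain a copy of $\overline{\C^3}$, giving $\dim_\C V_\C \geq 6$, a contradiction. Therefore all constituents of $V_\C$ are trivial, so $V_\C$ (and hence $V$) is trivial.

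The only place real care is needed is in the last step—verifying that $\C^3$ is not self-conjugate and therefore cannot appear alone in a complexified representation. This is straightforward from the weight-lattice description but is the one non-mechanical input; everything else is dimension counting from the Weyl dimension formula.
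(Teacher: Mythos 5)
Your proof is correct, and while it opens the same way as the paper's --- reduce to an irreducible real representation, then use the highest-weight classification to see that the complex irreducibles of $\op{SU}(3)$ of dimension less than $6$ are the trivial representation, $\C^3$, and $\overline{\C^3}$ --- it finishes by a genuinely different argument. The paper invokes the general dichotomy that a real irreducible either stays irreducible over $\C$ or splits as two isomorphic complex irreducibles, concludes that real irreducibles have dimension $1$, $2$, $3$, $6$, or more, and then kills the dimension-$\leq 3$ cases by noting that an invariant inner product would force the action to factor through $\op{SO}(3)$, which cannot receive a nontrivial map from the $8$-dimensional simple group $\op{SU}(3)$. You instead observe that the complexification of a real representation is self-conjugate, so its irreducible constituents pair up with their conjugates; since $\C^3$ and $\overline{\C^3}$ have distinct highest weights, a copy of $\C^3$ would drag along a copy of $\overline{\C^3}$ and push the dimension to at least $6$. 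Your route is more self-contained on the representation-theoretic side (no appeal to simplicity versus $\dim \op{SO}(3)$) and isolates exactly the reality obstruction --- the standard representation of $\op{SU}(3)$ has no invariant real structure because it is not self-dual --- whereas the paper's argument is slightly more generic and would apply verbatim to any simple group of dimension greater than $3$ without needing to know which small irreducibles are self-conjugate. Both are complete; just make sure you state (as you implicitly do) that complete reducibility over $\C$ is available because $\op{SU}(3)$ is compact.
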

\begin{proof}
    The complex irreducible representations of
    $\mathfrak{su}_3 \C \cong \mathfrak{sl}_3 \C$
    \footnote{
        For the isomorphism, write $\mathfrak{sl}_n \R$
        as the sum of its skew-symmetric part $\mathfrak{k}$ and
        symmetric part $\mathfrak{p}$. Then since
        $\mathfrak{su}_n$ consists of the matrices
        $A$ which satisfy $A + A^* = 0$ and are traceless,
        $\mathfrak{su}_n = \mathfrak{k} + i\mathfrak{p}$.
    }
    exponentiate to those of the simply-connected group $\op{SU}(3)$.
    Listing highest weights as done in \cite[Ch.~12]{fultonharris}
    shows that they have dimensions $1$, $3$, $6$, and higher.
    Over $\R$, an irreducible representation over $\C$
    either remains irreducible or splits into two isomorphic irreducible summands
    \cite[Prop.~II.6.6(vii-ix)]{brockerdieck};
    so any real irreducible representation has
    dimension $1$, $2$, $3$, $6$, or higher.

    Then choosing an invariant inner product, an irreducible action of $\op{SU}(3)$
    on $\R^k$ ($k < 6$) factors through $\op{SO}(3)$.
    Since $\op{SU}(3)$ is simple and of higher dimension than
    $\op{SO}(3)$, its image in $\op{SO}(3)$ is trivial.
\end{proof}

\begin{prop} \label{reprs_su2}
    Let $V$ be the standard representation of $\op{SU}(2)$ over $\C$.
    The finite-dimensional irreducible representations of $\op{SU}(2)$ over $\R$ are
    \begin{enumerate}[(i)]
        \item for even $n \geq 0$, an invariant real subspace
            of $\op{Sym}^n V$ (dimension $n+1$); and
        \item for odd $n \geq 1$, the representation $\op{Sym}^n V$
            taken as a real vector space (dimension $2(n+1)$).
    \end{enumerate}
\end{prop}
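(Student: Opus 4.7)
The plan is to reduce the classification of real irreducible representations of $\op{SU}(2)$ to the complex classification together with the Frobenius--Schur dichotomy (real vs.\ quaternionic type), since complex type can be ruled out upfront.

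First I would recall that the finite-dimensional complex irreducibles of $\op{SU}(2)$ are exhausted by the symmetric powers $\op{Sym}^n V$ ($n \geq 0$) of the standard representation $V$ on $\C^2$, of complex dimension $n+1$; this is standard highest-weight theory for $\lie{sl}_2 \C$ exponentiated to the simply-connected group (as used already in the proof of Prop.~\ref{nosu3}). I would then invoke the general principle (\cite[Prop.~II.6.6]{brockerdieck}) that every real irreducible representation of a compact group arises either as an invariant real subspace of a complex irreducible of real type, or as a complex irreducible of complex or quaternionic type viewed as a real vector space---with real dimension equal to $\dim_\C$ in the first case and $2\dim_\C$ in the latter two. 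Since every complex irreducible of $\op{SU}(2)$ is self-dual (for instance, because the longest Weyl element, represented by a suitable element of $\op{SU}(2)$, sends each weight to its negative), complex type does not occur, and the only remaining task is to identify the Frobenius--Schur type of each $\op{Sym}^n V$.

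The key computation uses the criterion that a self-dual complex irreducible is of real type iff it carries an invariant \emph{symmetric} nondegenerate bilinear form, and quaternionic type iff the invariant form is \emph{antisymmetric}. The standard representation $V$ carries the $\op{SU}(2)$-invariant symplectic form $\omega(v,w) = v_1 w_2 - v_2 w_1$. The induced invariant bilinear form on $\op{Sym}^n V$, evaluated on pure tensors $v^n,w^n$, is $\omega(v,w)^n$, so swapping arguments multiplies it by $(-1)^n$. Hence the induced form is symmetric for even $n$ and antisymmetric for odd $n$, identifying $\op{Sym}^n V$ as of real type for even $n$ and quaternionic type for odd $n$.

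Translating back via the dichotomy gives exactly the list in the statement: for even $n$, an invariant real subspace of $\op{Sym}^n V$ of real dimension $n+1$; for odd $n$, $\op{Sym}^n V$ itself as a real vector space of dimension $2(n+1)$, already irreducible over $\R$. The main obstacle is the sign bookkeeping in the symmetry-type calculation, but once that is done the rest is a direct appeal to standard structure theory of compact-group representations.
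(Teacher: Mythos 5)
Your proposal is correct, and it shares the paper's overall skeleton (complex irreducibles of $\op{SU}(2)$ are the $\op{Sym}^n V$, then apply the real/complex/quaternionic trichotomy of \cite[Prop.~II.6.6]{brockerdieck}), but the key step---determining the type of each $\op{Sym}^n V$---is done by a genuinely different computation. The paper writes down an explicit antilinear, equivariant map $\phi$ on $\op{Sym}^n V$ given by $(\phi f)(x,y) = \ol{f(\ol{y},-\ol{x})}$ and checks directly that $\phi^2 = (-1)^n$, so that $\phi$ is a real structure for even $n$ and a quaternionic structure for odd $n$; a byproduct is an explicit description of the real form for even $n$ as the polynomials with $f_m = (-1)^m \ol{f_{n-m}}$. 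You instead first rule out complex type by self-duality and then read off the type from the Frobenius--Schur criterion, computing that the invariant bilinear form induced on $\op{Sym}^n V$ by the symplectic form on $V$ picks up $(-1)^n$ under swapping arguments. Both are standard and complete; your route is arguably cleaner conceptually (the sign bookkeeping is a one-line computation on pure tensors $v^n, w^n$, which span), at the cost of needing the self-duality observation and the equivalence between structure maps and invariant forms, while the paper's explicit $\phi$ requires a slightly fiddlier verification of equivariance and $\phi^2 = (-1)^n$ but hands you the real subspace concretely. For the purposes of this paper (only the list of irreducibles and their dimensions is used downstream, in Case 4 of Prop.~\ref{isotropy_classification}), either proof suffices.
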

\begin{proof}
    The irreducible representations of $\op{SU}(2)$ over $\C$
    are the symmetric powers of $V$ \cite[Prop.~II.5.1, II.5.3]{brockerdieck}
    (see also \cite[11.8]{fultonharris}).
    The action can be written explicitly as
        \[
            \begin{pmatrix} a & b \\ -\ol{b} & -\ol{a} \end{pmatrix}
            f(x,y)
            =
            f(ax + by, -\ol{b}x + \ol{a}y)
        \]
    where $f \in \op{Sym}^n V$ is homogeneous of degree $n$
    in $\C[x,y]$.

    Define $\phi: \op{Sym}^n V \to \op{Sym}^n V$ by
        \[ (\phi f)(x,y) = \ol{f(\ol{y}, -\ol{x})} . \]
    One can verify from this formula that $\phi$ is
    $\op{SU}(2)$-equivariant and satisfies
    $\phi i = -i \phi$ and $\phi^2 = (-1)^n$.
    \begin{enumerate}[(i)]
        \item If $n$ is even, then $\phi$ is a real structure on $\op{Sym}^n V$.
            An irreducible representation over $\C$ with a real structure
            is the direct sum of two copies of an irreducible representation over $\R$
            \cite[Prop.~II.6.6(vii)]{brockerdieck}.
            A summand can be recovered as
            $(\phi + \op{Id})(\op{Sym}^n V)$,
            consisting of the polynomials
            $f(x,y) = \sum_{m} f_m x^m y^{n-m}$
            where $f_m = (-1)^m \ol{f_{n-m}}$.
        \item If $n$ is odd, then $\phi$ is a quaternionic structure
            on $\op{Sym}^n V$. An irreducible representation over $\C$
            with a quaternionic structure is irreducible over $\R$
            \cite[Prop.~II.6.6(ix)]{brockerdieck}. \qedhere
    \end{enumerate}
\end{proof}
\begin{rmk}[\hspace{1sp}{\cite[II.5.4]{brockerdieck}}] \label{rmk:isotropy_so3reprs}
    If and only if $n$ is even,
    the scalar $-1 \in \op{SU}(2)$ acts trivially on $\op{Sym}^n V$,
    allowing the action of $\op{SU}(2)$ to descend to an action of
    $\op{SO}(3) \cong \op{SU}(2)/\{\pm 1\} = \op{SU}(2)/Z(\op{SU}(2))$.
\end{rmk}

\begin{prop}
    $\op{End} W \cong \R$ for any
    irreducible representation $W$ of $\op{SO}(3)$ over $\R$.
\end{prop}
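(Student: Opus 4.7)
The plan is to reduce to Schur's lemma over $\R$ and then use the preceding classification to determine the type of $W$.

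First I would invoke Schur's lemma: for any finite-dimensional irreducible representation $W$ of a group over a field $k$, the endomorphism algebra $\op{End}_{k[G]} W$ is a division algebra over $k$. Taking $k = \R$ and applying Frobenius's theorem on finite-dimensional real division algebras, $\op{End}_{\R[\op{SO}(3)]} W$ must be one of $\R$, $\C$, or $\mathbb{H}$. So the task is to rule out $\C$ and $\mathbb{H}$, i.e.\ to show $W$ is of \emph{real type}.

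Next I would identify $W$ using the tools already in place. By Prop.~\ref{reprs_su2}(i) together with Remark~\ref{rmk:isotropy_so3reprs}, every irreducible real representation of $\op{SO}(3)$, pulled back along the double cover $\op{SU}(2) \to \op{SO}(3)$, is the $+1$-eigenspace of the real structure $\phi$ on $\op{Sym}^n V$ for some even $n \geq 0$. Since $\phi$ is a real structure, the complexification $W \otimes_\R \C$ is naturally isomorphic (as a complex $\op{SU}(2)$-representation, and hence as a complex $\op{SO}(3)$-representation since $-1$ acts trivially) to $\op{Sym}^n V$ itself, which is irreducible over $\C$.

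The conclusion is then standard: any $\R[\op{SO}(3)]$-equivariant endomorphism $T$ of $W$ extends $\C$-linearly to an $\op{SO}(3)$-equivariant endomorphism $T_\C$ of $W \otimes_\R \C \cong \op{Sym}^n V$. Since $\op{Sym}^n V$ is complex-irreducible, Schur over $\C$ forces $T_\C$ to be scalar multiplication by some $\lambda \in \C$; commutation with the real structure $\phi$ (equivalently, the requirement that $T_\C$ preserve $W \subset W \otimes_\R \C$) forces $\lambda \in \R$. Thus $\op{End}_{\R[\op{SO}(3)]} W = \R$.

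There is no real obstacle here; the result is essentially a bookkeeping consequence of Prop.~\ref{reprs_su2} and Schur's lemma. The only point deserving care is being explicit that $W \otimes_\R \C \cong \op{Sym}^n V$ as a complex representation, which is immediate from the construction of $W$ as the fixed subspace of a real structure.
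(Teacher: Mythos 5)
Your proof is correct and follows essentially the same route as the paper: complexify $W$, note that $W \otimes_\R \C$ is the irreducible complex representation $\op{Sym}^n V$ by the preceding classification, apply Schur's lemma over $\C$, and argue the resulting scalar is real. The only cosmetic difference is in the last step, where the paper deduces $\lambda \in \R$ from $\dim W$ being odd while you use the real-structure/real-form argument directly (and your opening appeal to Frobenius's theorem is harmless but unnecessary scaffolding).
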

\begin{proof}
    Every endomorphism of $W$ extends to an endomorphism of $W \otimes \C$,
    which is irreducible over $\C$.
    By Schur's lemma, $\phi \otimes \C$ is multiplication by a scalar.
    Since $\op{dim} W$ is odd (Rmk.~\ref{rmk:isotropy_so3reprs}),
    the only scalars that preserve $W$ are the reals.
\end{proof}
\begin{cor} \label{reprs_so3}
    The centralizer of $\op{SO}(3)_5$ in $\op{SO}(5)$ is trivial.
\end{cor}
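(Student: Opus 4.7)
The plan is to deduce this immediately from the preceding Proposition
(that $\op{End} W \cong \R$ for any irreducible real representation $W$ of $\op{SO}(3)$),
plus a determinant count.

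First I would observe that $\op{SO}(3)_5$ is by definition the image
of $\op{SO}(3)$ under an irreducible $5$-dimensional real representation,
so $\R^5$ is such a $W$. If $g \in \op{SO}(5)$ centralizes $\op{SO}(3)_5$,
then $g$ commutes with every element of the image, so $g$ is an
$\op{SO}(3)$-equivariant endomorphism of $\R^5$. The preceding
Proposition then gives $g = \lambda \cdot \ident$ for some $\lambda \in \R$.

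Next I would use that $g \in \op{SO}(5)$: orthogonality forces
$\lambda^2 = 1$, and $\det g = \lambda^5 = 1$ then rules out $\lambda = -1$,
leaving $\lambda = 1$, i.e.\ $g = \ident$. (Equivalently: $-\ident$ has
determinant $(-1)^5 = -1$ in odd dimension, so it does not lie in $\op{SO}(5)$.)

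There is no real obstacle here; the only thing to be careful about is
invoking the previous Proposition with the correct $W$ and then not
forgetting the determinant condition that excludes $-\ident$.
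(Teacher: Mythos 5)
Your proposal is correct and follows exactly the route the paper intends: the corollary is stated without proof precisely because a centralizing element is an equivariant endomorphism, hence a real scalar by the preceding Proposition, and the scalar must be $+1$ since $-\ident \notin \op{SO}(5)$ in odd dimension. Your write-up just makes that implicit deduction explicit.
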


\section{The case of irreducible isotropy}
\label{chap:irr}

When $G_p \curvearrowright T_p M$ is irreducible,
the classification of isotropy irreducible homogeneous spaces
and the classification of irreducible Riemannian symmetric spaces
produce a list (Prop.~\ref{thm:irr_list})
of homogeneous spaces as candidates.
Then it only remains to check that each is a maximal model
geometry with irreducible isotropy
(Prop.~\ref{prop:irr_model}--\ref{prop:irr_maximal}).
Taken together, these results prove Thm.~\ref{thm:main}(i).

\subsection{The list of candidates}

The first step of this classification is to obtain,
from the classification of isotropy irreducible homogeneous spaces,
the following explicit list of candidate geometries.
\begin{prop} \label{thm:irr_list}
    Let $M = G/G_p$ be a $5$-dimensional maximal geometry for which
    $G_p \curvearrowright T_p M$ is irreducible.
    Then $M$ is one of the following Riemannian symmetric spaces.
    \begin{align*}
        \Euc^5 &= \R^5 \rtimes \op{SO}(5) / \op{SO}(5)  \\
        S^5 &= \op{SO}(6)/\op{SO}(5)  &
        \op{SU}(3)&/\op{SO}(3)  \\
        \Hyp^5 &= \op{SO}(5,1)/\op{SO}(5)  &
        \op{SL}(3,\R)&/\op{SO}(3)
    \end{align*}
\end{prop}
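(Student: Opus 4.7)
The plan is to first narrow the possibilities for $G_p$ using the isotropy classification (Prop.~\ref{isotropy_classification}), then appeal to the classifications of isotropy irreducible spaces to pin down $G$. From Figure \ref{fig:isotropy_poset}, the only closed connected subgroups of $\op{SO}(5)$ that act irreducibly on $\R^5$ are $\op{SO}(5)$ itself and $\op{SO}(3)_5$, so the classification splits into these two cases.

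Case 1, $G_p = \op{SO}(5)$: since the full orthogonal group of $T_pM$ acts, $M$ is two-point homogeneous and hence a space of constant sectional curvature. The standard Killing--Hopf classification of simply-connected complete Riemannian manifolds of constant sectional curvature then gives $M \in \{\Euc^5, S^5, \Hyp^5\}$, up to rescaling the metric. Maximality combined with Prop.~\ref{prop:maximality_with_metric} forces $G = (\op{Isom} M)^0$, producing the three identity components $\R^5 \rtimes \op{SO}(5)$, $\op{SO}(6)$, and $\op{SO}(5,1)$.

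Case 2, $G_p = \op{SO}(3)_5$: the pair $(G,G_p)$ is a $5$-dimensional strongly isotropy irreducible homogeneous space in the sense of Manturov and Wolf in the compact case and Kr\"amer in the non-compact case. I would cite the classifications \cite{manturov1, manturov2, manturov3, manturov1998, wolf_irr, wolf_irr_fix, kramer1975} and extract the entries of dimension $5$ with isotropy acting by the unique $5$-dimensional irreducible representation of $\op{SO}(3)$; these turn out to be exactly the symmetric spaces $\op{SU}(3)/\op{SO}(3)$ and its non-compact dual $\op{SL}(3,\R)/\op{SO}(3)$. Simple-connectedness of both spaces is standard (the compact one from its description as traceless symmetric positive-definite unimodular matrices, the non-compact one because it is contractible as a non-compact Riemannian symmetric space of non-positive curvature).

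The main obstacle will be faithfully extracting the correct entries from the Manturov--Wolf--Kr\"amer tables: one must verify that the $\op{SO}(3)$ appearing in each tabulated example really is $\op{SO}(3)_5$ (the irreducible copy) rather than a reducibly-embedded $\op{SO}(3)$, and one must rule out any non-reductive or non-symmetric transitive $G$ not appearing in those lists. For the latter, strong isotropy irreducibility plus compactness of $G_p$ makes the coset space reductive (the $G_p$-module $\lie{g}/\lie{g}_p$ is irreducible, so any invariant complement for $\lie{g}_p$ exists and is unique), and the tables cover all reductive possibilities; any larger $G$ sharing the same $\op{SO}(3)_5$ isotropy would conflict with maximality of the listed spaces (verified in the subsequent Prop.~\ref{prop:irr_maximal}). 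Finally, the maximality-upgrading step of Prop.~\ref{prop:maximality_with_metric} ensures each candidate appears on our list with $G$ realized as the identity component of the full isometry group.
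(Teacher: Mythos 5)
Your Case 1 is essentially identical to the paper's: both reduce to $G_p \in \{\op{SO}(5), \op{SO}(3)_5\}$ via Prop.~\ref{isotropy_classification}, and both settle the $\op{SO}(5)$ case by constant curvature and Killing--Hopf. Case 2 is where you genuinely diverge. You propose reading the answer off the Manturov--Wolf--Kr\"amer tables of strongly isotropy irreducible spaces, and you correctly identify table-extraction as the main risk. The paper deliberately avoids that risk: it uses only the structural dichotomy of Wolf's Thm.~1.1 (with compact, connected, irreducibly-acting isotropy, either $G/G_p$ is Riemannian symmetric or $G$ is a compact simple Lie group). In the symmetric branch, the candidates are read off the standard and uncontroversial classification of irreducible symmetric spaces (Helgason's tables), giving $\op{SL}(3,\R)/\op{SO}(3)$ and $\op{SU}(3)/\op{SO}(3)$. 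In the compact-simple branch, $\dim G = 5 + \dim \op{SO}(3)_5 = 8$ forces $G \cong \op{SU}(3)$ by the classification of compact simple groups, and Lemma~\ref{only_one_so3_in_su3} (all copies of $\op{SO}(3)$ in $\op{SU}(3)$ are conjugate) shows this branch produces only the symmetric space again. What the paper's route buys is robustness: its own footnote records that Wolf's and Manturov's explicit tables have known omissions and only Kr\"amer's is ``believed complete,'' so your extraction step rests on a shakier foundation than a dichotomy theorem plus a dimension count. Your concern about distinguishing the irreducible $\op{SO}(3)_5$ embedding from a reducible one in each table entry is also dissolved by the paper's route, since the conjugacy lemma handles it in one stroke; and your appeal to the later maximality proposition to rule out larger $G$ is unnecessary (and mildly circular) once the dichotomy is in hand.
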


Aside from consulting existing classifications,
only the following standard fact is needed to produce the above list.
\begin{lemma} \label{only_one_so3_in_su3}
    All copies of $\op{SO}(3)$ in $\op{SU}(3)$ are conjugate.
\end{lemma}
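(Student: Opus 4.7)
The plan is to parametrize copies of $\op{SO}(3)$ in $\op{SU}(3)$ by $3$-dimensional faithful unitary representations of $\op{SO}(3)$, classify such representations, and upgrade the resulting $\op{U}(3)$-conjugacy to $\op{SU}(3)$-conjugacy.

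Given a subgroup $H \subseteq \op{SU}(3)$ isomorphic to $\op{SO}(3)$, pick an isomorphism $\rho: \op{SO}(3) \to H$; composing with the inclusion $H \hookrightarrow \op{U}(3)$ gives a faithful $3$-dimensional unitary representation of $\op{SO}(3)$ on $\C^3$. The classification of complex irreducible representations of $\op{SO}(3)$ -- the odd-dimensional symmetric powers of the standard $\op{SU}(2)$-representation, which descend to $\op{SO}(3)$ when the degree is even (cf.\ Prop.~\ref{reprs_su2} and Rmk.~\ref{rmk:isotropy_so3reprs}) -- leaves only two possible decompositions of a $3$-dimensional complex $\op{SO}(3)$-representation: the standard $3$-dimensional irreducible, or a sum of three trivials. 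Since $\op{SO}(3)$ is simple and $\rho$ is faithful, the latter option is excluded, so $\rho$ realizes the standard irreducible representation in every case.

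Given two such subgroups $H_1, H_2$ with associated homomorphisms $\rho_1, \rho_2$, the fact that both realize the same irreducible unitary representation produces an intertwiner $U \in \op{U}(3)$ with $U \rho_1(g) U^{-1} = \rho_2(g)$ for all $g \in \op{SO}(3)$; this is the standard consequence of Schur's lemma in the unitary setting (any intertwiner may be rescaled to a unitary one since $U^*U$ commutes with $\rho_1$ and so is a positive scalar). Hence $U H_1 U^{-1} = H_2$. To promote this to $\op{SU}(3)$-conjugacy, pick $c \in S^1$ with $c^3 = (\det U)^{-1}$ and replace $U$ by $cU \in \op{SU}(3)$; the central factor $c$ does not affect conjugation, so $(cU) H_1 (cU)^{-1} = H_2$. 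No serious obstacle arises: the whole argument reduces to the representation theory of $\op{SO}(3)$ already recalled, together with the routine Schur's lemma argument for unitary intertwiners.
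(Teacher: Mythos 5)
Your proposal is correct and follows essentially the same route as the paper: classify $3$-dimensional representations of $\op{SO}(3)$ to see that any embedding realizes the standard irreducible, invoke the fact that isomorphic unitary representations are unitarily isomorphic, and then adjust the unitary intertwiner into $\op{SU}(3)$. Your explicit rescaling by a cube root of $(\det U)^{-1}$ just fills in a step the paper leaves implicit.
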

\begin{proof}
    The irreducible representations of $\op{SO}(3)$ over $\C$ of dimension
    $3$ or lower are the trivial and the standard representation;
    so any two embeddings $\op{SO}(3) \hookrightarrow \op{SU}(3)$
    are conjugate by some representation isomorphism $h \in \op{GL}(3,\C)$.

    Isomorphic unitary representations are unitarily isomorphic
    (see e.g.\ \cite[Exercise II.1.8]{brockerdieck},
    or a proof in \cite[Prop.~5.2.1]{barut1986} using polar decomposition).
    Then $h$ can be taken to lie in $\op{SU}(3)$,
    so the two embeddings of $\op{SO}(3)$ are conjujgate in $\op{SU}(3)$.
\end{proof}

\begin{proof}[Proof of Prop.~\ref{thm:irr_list}]
    Assume $M = G/G_p$ is a $5$-dimensional maximal geometry
    such that $G_p \curvearrowright T_p M$ is irreducible.
    It is a theorem of Wolf that if a homogeneous space $G/G_p$
    has compact, connected, irreducibly-acting isotropy $G_p$,
    then either $G/G_p$ is a Riemannian symmetric space
    or $G$ is a compact simple Lie group
    \cite[Thm.~1.1]{wolf_irr}.\footnote{
        In \cite[Table, p.\ 107--110]{wolf_irr},
        Wolf gave a more explicit classification of
        strongly isotropy irreducible spaces.
        Wang and Ziller remark in \cite[p.\ 2]{wangziller1991} that
        this classification has an omission but do not say whether the erratum
        in \cite{wolf_irr_fix} completes it. Instead they refer the reader to two other
        classifications, by Manturov in \cite{manturov1, manturov2, manturov3}
        (earlier, also with omissions; see also \cite{manturov1998})
        and by Kr\"amer in \cite{kramer1975} (believed complete).
        A slightly weaker version of the result used here---omitting
        the claim that $G$ is simple---was known to Matsushima,
        with proof first given by Nagano in \cite[Appendix]{nagano1959}.
    }
    From the classification of isotropy representations $G_p \curvearrowright T_p M$
    (Prop.~\ref{isotropy_classification}), $G_p$ is either $\op{SO}(5)$
    or $\op{SO}(3)_5$.
    \paragraph{Case 1: $G_p = \op{SO}(5)$.}
        Since $\op{SO}(5)$ acts transitively on $2$-planes through the origin in $\R^5$,
        $M$ has constant sectional curvature---and is therefore
        exactly one of $\Euc^5$, $S^5$, and $\Hyp^5$ by the
        Killing-Hopf theorem \cite[Cor.\ 2.4.10]{wolf}.
    \paragraph{Case 2: $G_p = \op{SO}(3)_5$.} In this case, $G$ is $8$-dimensional.
        If $M$ is an irreducible symmetric space, then $M$
        is Euclidean, $\op{SL}(3,\R)/\op{SO}(3)$, or $\op{SU}(3)/\op{SO}(3)$
        by the classification of irreducible symmetric spaces
        (see \cite[X.6 Table V and p.515--518]{helgasonnew}).

        Otherwise, $G$ is an $8$-dimensional
        compact simple Lie group. By the classification of compact
        simple Lie groups \cite[X.6 Table IV]{helgasonnew},
        $G \cong \op{SU}(3)$.
        Since all copies of $\op{SO}(3)$ in $\op{SU}(3)$ are conjugate
        (Lemma \ref{only_one_so3_in_su3}),
        $X$ is the symmetric space $\op{SU}(3)/\op{SO}(3)$.
\end{proof}

\begin{rmk}
    We have not explicitly verified that the candidate spaces have irreducible
    isotropy. That this holds for every irreducible
    Riemannian symmetric space
    (which seems to be well known; see
    e.g.\ \cite[Ch.~1, condition (v)]{wolf_irr})
    is a side effect of proving the usual decomposition theorem for
    Riemannian symmetric spaces by using orthogonal involutive Lie algebras.
    Such an approach can be found in \cite[Thm.~8.3.8]{wolf}.

    Alternatively, it follows in the case of $\op{SO}(5)$
    from $\op{SO}(5)$ having a transitive action on $S^4$;
    and in the case of $\op{SO}(3)_5$ from observing that
    $\op{SO}(3)$ acts irreducibly on the space $V$ of traceless
    symmetric $3 \times 3$ matrices, and writing
    \begin{align*}
        \lie{sl}_3 \R &= \lie{so}_3 \R + V  &
        \lie{su}_3 \R &= \lie{so}_3 \R + iV .
    \end{align*}
\end{rmk}

\subsection{Maximality and the existence of compact quotients}

It happens that
all non-Euclidean isotropy irreducible spaces---with two exceptions,
neither of which has dimension $5$---are
already known to be maximal model geometries.
So to prove that the $5$-dimensional isotropy irreducible geometries
(i.e.\ those produced in Prop.~\ref{thm:irr_list}) are maximal model geometries,
it suffices to collect some existing theorems.

\begin{prop} \label{prop:irr_model}
    Any geometry with irreducible isotropy is a model geometry
    (i.e.\ admits a compact manifold quotient).
\end{prop}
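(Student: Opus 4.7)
The plan is to exhibit, for each of the five candidate spaces in Prop.~\ref{thm:irr_list}, a finite-volume manifold modeled on it. By Prop.~\ref{prop:geometries3}(ii), it suffices in each case to produce a lattice $\Gamma \subset G$ that meets no conjugate of the point stabilizer $G_p$ nontrivially.

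Three of the five candidates are essentially immediate: $S^5$ and $\op{SU}(3)/\op{SO}(3)$ are themselves compact manifolds modeled on the geometries, so the trivial lattice $\Gamma = \{1\}$ works; and $\Euc^5$ admits the torus $\R^5/\Z^5$, with the translation lattice $\Z^5 \subset \R^5 \rtimes \op{SO}(5)$ clearly intersecting no conjugate of $\op{SO}(5)$.

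The remaining two, $\Hyp^5 = \op{SO}(5,1)^0/\op{SO}(5)$ and $\op{SL}(3,\R)/\op{SO}(3)$, are noncompact Riemannian symmetric spaces of the form $G/K$ with $G$ semisimple and $K$ maximal compact. For these I would appeal to Borel's theorem on the existence of uniform (indeed arithmetic) lattices in any semisimple Lie group: such a lattice $\Gamma \subset G$ has a torsion-free finite-index subgroup $\Gamma'$ by Selberg's lemma. Since $\Gamma'$ is torsion-free and every element of $gKg^{-1}$ has finite order (being in a compact group), $\Gamma'$ intersects every conjugate of $K$ trivially, so $\Gamma' \backslash G/K$ is the desired compact smooth manifold quotient.

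No single step stands out as the main obstacle: once Wolf's classification has reduced us to this short list, the proof is essentially bookkeeping of well-known existence results (Borel for cocompact lattices, Selberg for passing to torsion-free subgroups) combined with the lattice criterion of Prop.~\ref{prop:geometries3}(ii).
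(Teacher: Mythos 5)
Your approach is essentially the paper's: the paper disposes of the compact candidates trivially and handles the rest by citing Borel's theorem that every simply-connected Riemannian symmetric space admits a compact Clifford--Klein form \cite[Thm.~A]{borel1963compact}, whereas you reassemble that theorem from its ingredients (existence of cocompact arithmetic lattices plus Selberg's lemma) and run through the five candidates one at a time. Both routes work, and your case analysis is fine as far as it goes; note only that the proposition as stated applies to \emph{any} geometry with irreducible isotropy, and the paper's citation of Wolf (``compact or symmetric'') plus Borel proves it in that generality, while your argument proves it only for the five $5$-dimensional candidates --- which is all the paper actually needs.

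One justification in your last step is wrong as written: it is not true that ``every element of $gKg^{-1}$ has finite order (being in a compact group)'' --- an irrational rotation in $\op{SO}(2)$ already has infinite order. The correct argument is that $\Gamma' \cap gKg^{-1}$ is a discrete subgroup of a compact group, hence finite, so every nontrivial element of that \emph{intersection} is a torsion element of $\Gamma'$; torsion-freeness of $\Gamma'$ then forces the intersection to be trivial. With that one-line repair your proof goes through.
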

\begin{proof}
    As part of the classification of strongly isotropy irreducible spaces,
    such a geometry is either already compact or Riemannian symmetric
    \cite[Thm.~1.1]{wolf_irr}.
    Borel proved in \cite[Thm.~A]{borel1963compact}
    that every simply-connected Riemannian symmetric space
    $G/K$ admits a compact manifold quotient $\Gamma\backslash G/K$.
\end{proof}

Since all of the $5$-dimensional isotropy irreducible geometries
are Riemannian symmetric spaces, it suffices to know when a
Riemannian symmetric space is maximal.
The geometry $\Euc^5$ is maximal since its isotropy $\op{SO}(5)$ is maximal
(Rmk.~\ref{rmk:maximality_by_isotropy});
and the other candidates $G/K$ have semisimple $G$,
so the following result proves them maximal.

\begin{prop} \label{prop:irr_maximal}
    Suppose $G/K$ is a Riemannian symmetric space---i.e.\ suppose
    $K$ is an open subgroup of the fixed set in $G$ of some
    order $2$ element of $\op{Aut} G$.
    Suppose further that $G$ is semisimple and acts faithfully on $G/K$.
    Then $G/K$ is a maximal geometry.
\end{prop}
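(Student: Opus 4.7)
My plan is to apply Prop.~\ref{prop:maximality_with_metric}(ii) and show that $G$ equals $L := (\op{Isom} M)^{0}$ in some $G$-invariant metric. Since $K$ is compact, averaging yields a $G$-invariant Riemannian metric on $M = G/K$. Because $K$ is connected (see Section~\ref{chap:isotropy}) and open in $G^{\sigma}$, the involution $\sigma$ preserves $K$ setwise and hence descends to a diffeomorphism $s_p$ of $M$ fixing $p$ with derivative $-\mathrm{id}$ on $T_p M$. One checks $s_p$ is a global isometry via the intertwining identity $s_p \circ L_g = L_{\sigma(g)} \circ s_p$, making $M$ a Riemannian symmetric space. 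Thus $G \subseteq L$, and maximality reduces to the equality $\mathfrak{g} = \mathfrak{l}$ at the Lie algebra level.

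Conjugation by $s_p$ is an involutive automorphism of $L$ whose Lie-algebra decomposition gives $\mathfrak{l} = \mathfrak{l}_p \oplus \mathfrak{l}^{-}$, with $\mathfrak{l}^{-} \xrightarrow{\sim} T_p M$ by evaluation at $p$ (the standard calculation using $(d s_p)_p = -\mathrm{id}$). The analogous decomposition $\mathfrak{g} = \mathfrak{k} \oplus \mathfrak{p}$ sits compatibly inside $\mathfrak{l}$, so comparing dimensions of the anti-fixed parts yields $\mathfrak{p} = \mathfrak{l}^{-}$. It remains to show $\mathfrak{l}_p = \mathfrak{k}$.

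I would first establish $\mathfrak{k} = [\mathfrak{p}, \mathfrak{p}]$: the centralizer $Z_{\mathfrak{k}}(\mathfrak{p})$ is an ideal of $\mathfrak{g}$ lying in $\mathfrak{k}$, hence zero by faithfulness of the $G$-action; then $[\mathfrak{p}, \mathfrak{p}] + \mathfrak{p}$ is an ideal of $\mathfrak{g}$, and its complementary ideal (guaranteed by semisimplicity) lies in $\mathfrak{k}$ and so must also vanish. Now any $A \in \mathfrak{l}_p$ acts on $T_p M \cong \mathfrak{p}$ preserving the metric and the symmetric-space curvature $R(X,Y)Z = -[[X,Y],Z]$. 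Using curvature-preservation together with faithfulness of $\op{ad}: \mathfrak{k} \to \op{End}(\mathfrak{p})$, I would well-define $\tilde A \in \op{End}(\mathfrak{g})$ by $\tilde A|_{\mathfrak{p}} = A$ and $\tilde A([X,Y]) = [AX, Y] + [X, AY]$ for $X, Y \in \mathfrak{p}$, and check that $\tilde A$ is a derivation of $\mathfrak{g}$. Since derivations of the semisimple algebra $\mathfrak{g}$ are inner, $\tilde A = \op{ad}(X_{0})$ for some $X_{0} \in \mathfrak{g}$; writing $X_{0} = X_{\mathfrak{k}} + X_{\mathfrak{p}}$ and using that $\tilde A$ preserves $\mathfrak{p}$ forces $X_{\mathfrak{p}} \in Z_{\mathfrak{g}}(\mathfrak{p}) = 0$, so $A = \op{ad}(X_{\mathfrak{k}})|_{\mathfrak{p}} \in \op{ad}(\mathfrak{k})$. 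Faithfulness of $\op{ad}|_{\mathfrak{k}}$ then identifies $\mathfrak{l}_p$ with $\mathfrak{k}$.

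The main obstacle is the routine but multi-case verification that $\tilde A$ is well-defined and a derivation, checking the bracket identity on each of $\mathfrak{p} \times \mathfrak{p}$, $\mathfrak{k} \times \mathfrak{p}$, and $\mathfrak{k} \times \mathfrak{k}$. Alternatively, the conclusion that $G$ is the identity component of $\op{Isom}(G/K)$ for a semisimple effectively-acting Riemannian symmetric pair is classical and can be cited directly from standard references such as \cite[Ch.~V]{helgasonnew}.
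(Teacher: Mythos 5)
Your proposal is correct, and its overall architecture is the same as the paper's: reduce maximality to showing $G = (\op{Isom} G/K)^0$ in an invariant metric and then invoke Prop.~\ref{prop:maximality_with_metric}. The difference is in how that key equality is obtained. The paper simply cites \cite[Thm.~V.4.1(i)]{helgasonnew}, whereas you sketch a self-contained proof: construct the point symmetry $s_p$ from $\sigma$, decompose $\lie{l} = \lie{l}_p \oplus \lie{l}^-$ under conjugation by $s_p$, identify $\lie{l}^-$ with $\lie{p}$, prove $\lie{k} = [\lie{p},\lie{p}]$ using effectiveness and semisimplicity, and extend an isotropy element to a derivation of $\lie{g}$ via curvature-preservation, which is inner by semisimplicity. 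This sketch is essentially Helgason's argument and the steps you outline all go through (the well-definedness of $\tilde A$ on $[\lie{p},\lie{p}]$ follows from the curvature identity together with $Z_{\lie{k}}(\lie{p})=0$, exactly as you indicate); your version buys self-containedness at the cost of length, while the paper's buys brevity at the cost of an external reference. One small quantifier to tighten: you say you will show $G = (\op{Isom} M)^0$ in \emph{some} $G$-invariant metric, but to conclude maximality you need this for the metric invariant under the (a priori larger) group $G'$ of a subsuming maximal geometry --- i.e.\ for \emph{every} $G$-invariant metric, which is how the paper states the cited theorem. This costs you nothing, since your construction of $s_p$ and the Lie-algebra argument apply verbatim to an arbitrary $G$-invariant metric, but the statement should be phrased that way.
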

\begin{proof}
    If $G/K$ is a Riemannian symmetric space with $G$ semisimple and acting faithfully,
    then $G = (\op{Isom} G/K)^0$ in every $G$-invariant metric on $G/K$
    \cite[Thm.~V.4.1(i)]{helgasonnew}.
    At least one of these invariant metrics has an isometry group whose identity component
    is the transformation group $G'$ of a maximal geometry $G'/K'$ realizing $G/K$
    (Prop.~\ref{prop:maximality_with_metric}).
    Then $G' = G$, so $G/K$ is maximal.
\end{proof}

\begin{rmk}
    Wolf also proved maximality in the non-symmetric case in \cite[Thm.~17.1]{wolf_irr}.
    That is, except for
    $G_2/\op{SU}(3) \cong S^6$
    and $\op{Spin}(7)/G_2 \cong S^7$,
    a simply-connected
    isotropy irreducible Riemannian homogeneous space $G/K$ is maximal
    if $G$ is semisimple (i.e.\ $G/K$ is not Euclidean) and acts faithfully.
    The same theorem also includes a description of the full isometry group---not
    just the identity component.
\end{rmk}

\begin{rmk}
    One could instead verify maximality
    by checking that the listed spaces
    are not extended by any geometries with larger isotropy groups.
    The constant-curvature geometries have maximal isotropy;
    $\op{SL}(3,\R)/\op{SO}(3)$ is distinguished from the constant-curvature
    geometries by having rank $2$
    \cite[Appendix 5 \S{2}, p.~242]{ballmangromovschroeder};
    and $\op{SU}(3)/\op{SO}(3)$ is distinguished by having nonzero $\pi_2$,
    which can be calculated using the homotopy exact sequence.
\end{rmk}

\section{The case of trivial isotropy: solvable Lie groups}
\label{chap:sol}

This section proves Thm.~\ref{thm:main}(ii),
the classification of $5$-dimensional maximal model geometries $M = G/G_p$
for which $G_p \curvearrowright T_p M$ is trivial.
(The reader may wish to consult the identification key in
Figure \ref{fig:solvable_flowchart} for a reminder of the results.)

\paragraph{Overview (see also ``Roadmap'' below).}
Our strategy, following that of Filipkiewicz in \cite[\S 6]{filipk},
begins by invoking Filipkiewicz's reduction to a classification of
simply-connected solvable groups.
\begin{prop}[\hspace{1sp}{\cite[Prop.\ 6.1.3]{filipk}}] \label{prop:filipk_solvable_groups}
    If $M = G/G_p$ is a maximal model geometry with $0$-dimensional point stabilizers,
    then $M \cong G$ is a connected, simply-connected, unimodular solvable
    Lie group, and $\op{Aut}(G)$ is solvable and simply-connected.
\end{prop}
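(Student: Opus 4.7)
Since $G_p$ is compact by the definition of a geometry and connected by the long exact sequence of the fibration $G_p \hookrightarrow G \to M$ (as noted before Prop.~\ref{isotropy_classification}), the assumption that $G_p$ is $0$-dimensional forces $G_p = \{1\}$. So $M \cong G$ as $G$-manifolds with $G$ acting on itself by left translation, and $G$ is simply-connected because $M$ is. For unimodularity, apply Prop.~\ref{prop:geometries3}(ii): as a model geometry with trivial stabilizer, $G$ admits a lattice $\Gamma$, and any locally compact group admitting a lattice is unimodular (a standard fact via comparison of the left and right Haar measures against the finite $G$-invariant measure on $\Gamma \backslash G$).

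The remaining assertions all flow from one maximality observation. If $K \subseteq \op{Aut}(G)$ is a nontrivial compact connected subgroup, then $G \rtimes K$ acts smoothly and transitively on $G$---by left translation and by automorphism, respectively---with compact isotropy $K$ at the identity, so $(G, G \rtimes K)$ is a connected geometry properly subsuming $(G, G)$, contradicting maximality. Hence $\op{Aut}(G)^0$ has no nontrivial compact subgroup. Because $\op{Aut}(G) \hookrightarrow \op{GL}(\lie{g})$ via the differential at $e$, it is a linear Lie group, and a connected linear Lie group with trivial maximal compact is of the form $\R^m \ltimes U$ with $U$ unipotent (by Levi decomposition, ruling out a semisimple Levi factor since linear semisimple groups carry nontrivial maximal compacts, together with the polar decomposition of the remaining reductive part, ruling out compact torus factors)---hence solvable and diffeomorphic to Euclidean space, so simply-connected. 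This gives $\op{Aut}(G)^0$ solvable and simply-connected. Solvability of $G$ then follows via the adjoint embedding $\op{Ad}\colon G/Z(G) \hookrightarrow \op{Aut}(G)^0$: if $G$ had a nontrivial semisimple Levi factor $S$, then $\op{Ad}(S)$ would be a nontrivial linear semisimple subgroup of $\op{Aut}(G)^0$ (its kernel $\ker\op{Ad} \cap S \subseteq Z(S)$ being discrete), whose nontrivial maximal compact would contradict the preceding paragraph.

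I expect the main obstacle to be extending the solvable simply-connected conclusion from $\op{Aut}(G)^0$ to the full $\op{Aut}(G)$, i.e.\ verifying that $\op{Aut}(G)$ is connected. Any finite-order $\phi \in \op{Aut}(G) \setminus \op{Aut}(G)^0$ would generate a finite---hence compact---subgroup and is excluded directly by the same maximality observation. The general case is more delicate: the plan is to show that the finite component group $\op{Aut}(G)/\op{Aut}(G)^0$ lifts to a finite subgroup of $\op{Aut}(G)$, by exploiting that $\op{Aut}(G)^0$ is simply-connected solvable (so its center is torsion-free abelian and the relevant cohomological obstructions to splitting the extension vanish for any finite group of coefficients), and then applying the exclusion once more. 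This yields $\op{Aut}(G) = \op{Aut}(G)^0$ and completes the proof.
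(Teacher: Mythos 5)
The paper itself offers no proof of this proposition---it is imported directly from Filipkiewicz---so there is no in-paper argument to compare against; I can only assess your argument on its own terms. Your first two paragraphs are sound and give what is surely the intended proof of the substantive content: $G_p=\{1\}$ from compactness, connectedness, and $\dim G_p = 0$; unimodularity from the existence of a lattice (Prop.~\ref{prop:geometries3}(ii)); the exclusion of nontrivial compact \emph{connected} subgroups $K \subseteq \op{Aut}(G)$ because $(G, G\rtimes K)$ would be a geometry subsuming $(G,G)$; the structure theory for closed linear groups with no such subgroups (using $\op{Aut}(G)\cong\op{Aut}(\lie{g})$, which is where simple connectedness of $G$ enters); and solvability of $G$ via $\op{Ad}$ and the fact that a nontrivial linear semisimple group contains a nontrivial compact connected subgroup.

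The genuine gap is the final paragraph, and it is not repairable as planned. First, the step ``any finite-order $\phi$ \ldots is excluded directly by the same maximality observation'' fails: Definition~\ref{defn:geometries} requires the transformation group of a geometry to be \emph{connected}, so $G \rtimes \langle\phi\rangle$ (or $G\rtimes F$ for any finite $F \neq 1$) is not a geometry and cannot witness non-maximality. Your subsumption argument genuinely only rules out compact connected subgroups. Second, the target of that paragraph is false: $\op{Aut}(G)$ is disconnected for several of the geometries classified here. For $G = \R^3 \rtimes \{xyz=1\}^0$, the three weight lines of the $\R^2$-action on $\R^3$ are distinct, so any connected group of automorphisms preserves each of them; but the automorphism cyclically permuting the coordinates of $\R^3$ (acting on $\R^2$ by the induced permutation of weights) is an honest automorphism lying outside $\op{Aut}(G)^0$. (Already $\Sol^3$ in dimension $3$ is a maximal model geometry with disconnected automorphism group.) So the conclusion of the proposition has to be read as a statement about $\op{Aut}(G)^0$---equivalently, that $\op{Aut}(G)$ has no nontrivial compact connected subgroup and its identity component is solvable and simply connected---which is exactly what your second paragraph establishes and is all that is used downstream (e.g.\ in Prop.~\ref{prop:solvable_maximality}). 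I would drop the last paragraph entirely and state the conclusion in that form.
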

The classification proceeds by expressing $G$ as an extension
of an abelian group by a nilpotent group (such as the nilradical).
Conveniently, only split extensions are needed
in order to produce the maximal geometries.
That is, Section \ref{sec:semidirect_5d} will prove that
\begin{prop} \label{prop:semidirect_5d}
    If $G = G/\{1\}$ is a maximal model geometry of dimension $5$,
    then either
    \begin{enumerate}[(i)]
        \item $G \cong \R^3 \rtimes \R^2$ where $\R^2$ acts on $\R^3$
            as the diagonal matrices with positive entries and
            determinant $1$; or
        \item $G \cong N \rtimes \R$
            where $N$ is nilpotent, connected, and simply-connected.
    \end{enumerate}
\end{prop}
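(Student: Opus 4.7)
We stratify $G$ by the dimension of its nilradical $N := \Nilrad(G)$. By Prop.~\ref{prop:filipk_solvable_groups}, $G$ is connected, simply-connected, unimodular, solvable, and admits a lattice. For solvable $\lie{g}$, $[\lie{g},\lie{g}] \subseteq \nilrad(\lie{g})$, so $G/N$ is abelian, and the standard bound $\dim\nilrad \geq \tfrac{1}{2}\dim\lie{g}$ (from $\lie{g}/\nilrad$ acting faithfully on $\nilrad/[\nilrad,\nilrad]$) gives $\dim N \in \{3,4,5\}$. When $\dim N = 5$, $G$ is itself nilpotent; any hyperplane $\lie{n}' \subset \lie{g}$ containing $[\lie{g},\lie{g}]$ is an ideal, and any one-dimensional complement is automatically an abelian subalgebra, so $G \cong N' \rtimes \R$ with $N'$ four-dimensional nilpotent. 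When $\dim N = 4$, any $x \in \lie{g}\setminus\lie{n}$ spans a one-dimensional complementary subalgebra, giving $G \cong N \rtimes \R$. Both sub-cases fall under (ii).

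The interesting case is $\dim N = 3$, where $G/N \cong \R^2$ and $N$ is either $\R^3$ or $\Heis_3$. To rule out $\Heis_3$, lift generators $e_1, e_2 \in \lie{g}$ of $\R^2$ and set $D_i := \op{ad}(e_i)|_{\Heis_3}$. Since $[e_1, e_2] \in \Heis_3$, the commutator $[D_1, D_2] = \op{ad}([e_1, e_2])$ is an inner derivation, so $D_1, D_2$ commute modulo inner. Unimodularity forces $\op{tr}(D_i) = 0$, which via the direct computation $\op{Der}(\Heis_3)/\op{Inn}(\Heis_3) \cong \lie{gl}_2(\R)$ (the upper-left $2 \times 2$ block of a derivation matrix) places the images of $D_i$ in $\lie{sl}_2(\R)$; but $\lie{sl}_2(\R)$ has only one-dimensional abelian subalgebras, so $D_1$ and $D_2$ are linearly dependent modulo inner derivations. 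Hence some nonzero $v \in \R^2$ has $D_v$ inner, and subtracting the corresponding element of $\Heis_3$ produces $v' \in \lie{g} \setminus \Heis_3$ commuting with all of $\Heis_3$; then $\Heis_3 \oplus \R v'$ is a four-dimensional nilpotent ideal, contradicting $\dim N = 3$.

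With $N = \R^3$ forced, the action is given by commuting traceless matrices $A_1, A_2 \in \lie{sl}(3,\R)$; the analogous argument---any nilpotent combination $a A_1 + b A_2$ would produce a four-dimensional nilpotent ideal $\R^3 \oplus \R(a e_1 + b e_2)$---shows that every nonzero combination is non-nilpotent, forcing $\op{span}(A_1, A_2)$ to be a toral (Cartan) subalgebra of $\lie{sl}(3,\R)$, since in $\lie{sl}_3$ a two-dimensional abelian subalgebra containing a mixed Jordan-type element would extend to an impossible three-dimensional abelian subalgebra. Splitting of the extension $0 \to \R^3 \to \lie{g} \to \R^2 \to 0$ then follows, because the obstruction class in $\R^3/(\op{image}\,A_1 + \op{image}\,A_2)$ vanishes (a Cartan of $\lie{sl}_3$ has no simultaneous kernel on $\R^3$). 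The final identification of the action as the specific diagonal det-1 action combines maximality (which closes the acting $\R^2$ up to its centralizer in $\op{SL}(3, \R)^0$, a maximal torus) with the lattice condition (which via a Mal'cev-type criterion forces integer characteristic polynomials at some time and hence real distinct eigenvalues, pinning the Cartan to the split one); this last step is the content of Prop.~\ref{prop:split_r3}. The main obstacle is therefore the $\dim N = 3$ case, and within it the $\Heis_3$ elimination above together with the intertwining of maximality, unimodularity, and the lattice condition in identifying the specific $\R^2$-action on $\R^3$.
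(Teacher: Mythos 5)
Your proof is correct and follows essentially the same route as the paper: bound the nilradical dimension below by $3$, eliminate $\Heis_3$ as a possible nilradical because its traceless outer derivation algebra is $\lie{sl}_2\R$, which has no $2$-dimensional abelian subalgebras (so a kernel element of the $\R^2$-action would enlarge the nilradical), split off an $\R$ factor whenever the nilradical has dimension at least $4$, and defer the $\nilrad\lie{g}\cong\R^3$ case to Prop.~\ref{prop:split_r3} exactly as the paper's own proof outline does. The only substantive divergence is your parenthetical sketch of that last step --- the paper discards the non-split Cartan by exhibiting a commuting $S^1\subseteq\op{Aut}(G)$ that violates maximality rather than by the lattice criterion you suggest --- but since you invoke Prop.~\ref{prop:split_r3} as a black box just as the paper does, this does not affect correctness.
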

The problem then reduces to classifying semidirect products
and checking lattice existence (for model geometries) and maximality.
We perform this classification in the language of Lie algebras,
using the correspondence between Lie algebras
and connected, simply-connected Lie groups
(see e.g.\ \cite[Thm.~I.2.2.10--11]{onishchik1} and \cite[Thm.~1.4.2]{onishchik3}).

\paragraph{Roadmap.}
After Section \ref{sec:solvable_notations} lists some notation,
Section \ref{sec:semidirect_5d} proves the above proposition
using some Lie algebra cohomology.
Details on the $\R^3 \rtimes \R^2$ geometry (in (i))
are in Section \ref{sec:solvable_r2_by_r3},
including an application of Dirichlet's unit theorem
(Prop.~\ref{prop:commutingmatrices}).
For each of the three groups that can occur as $N$ in (ii), 
a subsection of Section \ref{sec:semidirect_classification}
lists semidirect products,
omitting any that are easily shown not to produce a maximal model geometry;
and questions of lattice existence determine the model geometries.
Section \ref{sec:solvable_maximality} proves maximality
(Prop.~\ref{prop:solvable_maximality})
using a general theorem by Gordon and Wilson,
and lists features distinguishing the geometries from each other
(Prop.~\ref{prop:solvable_distinctness}).
Taken together, these results prove Thm.~\ref{thm:main}(ii).

\begin{rmk}
The $5$-dimensional solvable Lie algebras over $\R$ having already been classified by
Mubarakzyanov in \cite{muba_solvable5}
using a largely similar approach,\footnote{The classification is complete
in dimensions $\leq 6$ and is known in limited cases for higher dimensions;
see \cite[Introduction]{snobl2012} for a survey.}
much of this case could be reduced to consulting a table such as \cite[Table II]{patera}.
It is instead presented explicitly here,
since the method stands on its own and is illustrative---exposing tools and
calculations that will be reused in classifying the fibered geometries
(particularly a partial classification of extensions of $\R^2$ by $\R^3$ in
Lemmas \ref{lemma:r2_actions} and \ref{lemma:solvable_r2_by_r3_cohomology}).
Only a fraction of the classification of solvable Lie algebras over $\R$
is duplicated, since only those which admit lattices
and are tangent to maximal geometries are of interest.
\end{rmk}

\subsection{Notations}
\label{sec:solvable_notations}

The strategy outlined above for classifying solvable Lie groups
involves nilpotent subalgebras of their Lie algebras.
The following two definitions will aid in naming such subalgebras.
\begin{defn} \label{defn:nildefault}
    $\lie{n}_k$ is the semidirect sum $\R^{k-1} \semisum \R$
    where some $x_k \in \R$ acts on $\R^{k-1}$
    in its standard basis $\{x_1, \ldots, x_{k-1}\}$
    by a single Jordan block with eigenvalue $0$.
\end{defn}
\begin{eg}
    $\lie{n}_3$ is the $3$-dimensional Heisenberg Lie algebra,
    and $\lie{n}_4$ is the unique $4$-dimensional indecomposable
    nilpotent Lie algebra (see \cite[Table I]{patera}
    or Prop.~\ref{prop:nil_4d} below).
\end{eg}
\begin{defn}[\textbf{Nilradical}, see e.g.\ {\cite[\S 2.5]{onishchik3}}]
    The \keyword{nilradical} $\nilrad(\lie{g})$ of a Lie algebra $\lie{g}$
    is the unique maximal nilpotent ideal of $\lie{g}$.
\end{defn}

Lie algebra extensions that split are semidirect sums $\lie{h} \semisum \lie{g}$,
which are classified by the action
$\lie{g} \to \op{der}(\lie{h})/\op{ad}(\lie{h}) = \op{out}(\lie{h})$.
As the actions encountered are usually traceless
due to unimodularity considerations (such as in Lemma \ref{lemma:tracelessly} below),
we also make the following definition.
\begin{defn}
    If $\lie{h}$ is a unimodular Lie algebra
    (so that $\op{ad} \lie{h}$ acts tracelessly),
    $\op{sout} \lie{h} \subseteq \op{out} \lie{h}$
    denotes the subalgebra consisting of traceless outer derivations.
\end{defn}

\subsection{Reduction to semidirect products}
\label{sec:semidirect_5d}

This section proves Prop.~\ref{prop:semidirect_5d},
which asserts that a maximal model geometry
$G = G/\{1\}$ of dimension $5$ is one of two forms of semidirect product.
Since such a claim bears a close resemblance to
\cite[Prop.\ 6.1.4]{filipk} from the $4$-dimensional case,
an $n$-dimensional generalization such as the following may be of interest.
\begin{prop}[$G$ is an extension of an abelian group by a nilpotent group]
    \label{prop:solvable_extension_problem}
    Suppose $\lie{g}$ is a unimodular
    solvable Lie algebra of dimension $n > 1$.
    \begin{enumerate}[(i)]
        \item If $\lie{g}$ is not nilpotent then $\lie{g}$ is an extension
                \[ 0 \to \nilrad \lie{g} \to \lie{g} \to \R^k \to 0 \]
            for some $0 < k < n$,
            and $\lie{g}$ acts tracelessly on $\nilrad \lie{g}$.
        \item If $\lie{g}$ is nilpotent, then it is a semidirect sum of
            a nilpotent ideal $\lie{n}$ and $\R$, with $\R$
            acting tracelessly.
    \end{enumerate}
\end{prop}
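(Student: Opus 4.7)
The plan is to prove both parts by exploiting the same underlying structural fact: in any solvable Lie algebra over a field of characteristic zero, the derived ideal $[\lie{g}, \lie{g}]$ is nilpotent, and hence contained in $\nilrad \lie{g}$. This follows by applying Lie's theorem over the algebraic closure and observing that $[\lie{g}, \lie{g}]$ becomes strictly upper triangular. For part (i) this inclusion already gives the desired extension structure: $\lie{g}/\nilrad \lie{g}$ is abelian, hence isomorphic to $\R^k$. To pin down $0 < k < n$, I would argue that $k > 0$ because $\lie{g}$ is not nilpotent (so $\nilrad \lie{g} \neq \lie{g}$), and $k < n$ because $\lie{g}$ is then not abelian either, so $[\lie{g}, \lie{g}]$ is a nonzero nilpotent ideal, forcing $\nilrad \lie{g} \neq 0$.

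For the tracelessness claim in (i), I would use that the induced action of $\op{ad}(x)$ on the abelian quotient $\lie{g}/\nilrad \lie{g}$ is zero (since $[x, y] \in [\lie{g}, \lie{g}] \subseteq \nilrad \lie{g}$), so $\op{tr}(\op{ad}(x)) = \op{tr}\bigl(\op{ad}(x)|_{\nilrad \lie{g}}\bigr)$; unimodularity of $\lie{g}$ makes the left side zero. For part (ii), nilpotency is much stronger than what is needed: unimodularity is automatic since $\op{ad}(x)$ is already nilpotent. Nilpotency of $\lie{g}$ also gives $[\lie{g}, \lie{g}] \subsetneq \lie{g}$, so I can pick any codimension-one subspace of the abelianization $\lie{g}/[\lie{g}, \lie{g}]$ and pull it back to obtain a codimension-one ideal $\lie{n} \subset \lie{g}$. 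For any $x \notin \lie{n}$, the decomposition $\lie{g} = \lie{n} \semisum \R x$ is a split extension, with $\lie{n}$ automatically nilpotent as a subalgebra of $\lie{g}$, and the action of $\R x$ traceless because $\op{ad}(x)|_{\lie{n}}$ is nilpotent.

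I do not anticipate any genuine obstacle in carrying this out: the argument is a short assembly of standard facts about nilradicals, derived ideals, and unimodularity. The only mildly delicate point is verifying the range of $k$ in (i), where one must remember separately that $\lie{g}$ being non-nilpotent rules out $\nilrad \lie{g} = \lie{g}$ and also (via the abelian-implies-nilpotent observation) rules out $[\lie{g}, \lie{g}] = 0$.
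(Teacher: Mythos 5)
Your proposal is correct and follows essentially the same route as the paper: part (i) via the inclusion $[\lie{g},\lie{g}] \subseteq \nilrad\lie{g}$ (the paper cites Chevalley's theorem where you sketch the Lie's-theorem argument) together with additivity of trace over the extension, and part (ii) via a codimension-one ideal containing $[\lie{g},\lie{g}]$. The only cosmetic differences are that the paper packages the trace argument into a reusable lemma and deduces $0<k<n$ by noting $k=0$ or $k=n$ would force $\lie{g}$ nilpotent, whereas you obtain $k<n$ from $[\lie{g},\lie{g}]\neq 0$ and in (ii) get tracelessness from nilpotency of $\op{ad}(x)$ directly.
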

The following outline summarizes
the gap between the above extension problems
and the two semidirect products described in
Prop.~\ref{prop:semidirect_5d}.
\begin{proof}[Proof outline of Prop.~\ref{prop:semidirect_5d}]
    The Lie algebra $\lie{g}$ of a model geometry $G = G/\{1\}$
    is unimodular and solvable
    (Prop.~\ref{prop:filipk_solvable_groups}).
    The nilradical $\nilrad \lie{g}$ of a $5$-dimensional unimodular
    solvable Lie algebra $\lie{g}$ is either $\R^3$ or of dimension at least $4$
    (Prop.~\ref{prop:solvable_nilalternative}).
    The two cases of Prop.~\ref{prop:semidirect_5d} are proven as follows.
    \begin{enumerate}[(i)]
        \item If $\nilrad \lie{g} \cong \R^3$ and $G$ is a maximal geometry,
            then the extension in Prop.~\ref{prop:solvable_extension_problem}(i) above
            is split and $\R^2$ acts on $\R^3$ by traceless diagonal matrices
            (Prop.~\ref{prop:split_r3}).
        \item If $\nilrad \lie{g}$ has dimension at least $4$,
            then $\lie{g}$ is an extension of $\R$ by a nilpotent algebra
            by Prop.~\ref{prop:solvable_extension_problem} above;
            and the extension splits since any linear map from $\R$
            is a homomorphism.
    \end{enumerate}
    The requirement that geometries are simply-connected then allows
    these Lie algebra results to apply to the corresponding Lie groups.
\end{proof}
To complete the proof, the following subsections each prove one component---the
$n$-dimensional extension problem (Prop.~\ref{prop:solvable_extension_problem}),
the restriction on $\nilrad \lie{g}$ (Prop.~\ref{prop:solvable_nilalternative}),
and the case when $\nilrad \lie{g} = \R^3$ (Prop.~\ref{prop:split_r3}).

\subsubsection{The general extension problem}

This section proves Prop.~\ref{prop:solvable_extension_problem}---the
description of unimodular solvable Lie algebras as extensions of abelian
algebras by nilpotent algebras.
The claims about actions being traceless will be proven
using the following observation.
\begin{lemma} \label{lemma:tracelessly}
    If $\lie{a}$ is an ideal in a unimodular solvable Lie algebra $\lie{g}$
    such that $\lie{g}/\lie{a}$ is unimodular, then
    $\lie{g}$ acts tracelessly on $\lie{a}$.
\end{lemma}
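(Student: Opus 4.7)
The plan is to decompose $\op{ad}_x$ on $\lie{g}$ into a block upper-triangular form using the ideal $\lie{a}$, then read off the trace on $\lie{a}$ as a difference of two traces that vanish by the unimodularity hypotheses. Solvability plays no role in this particular deduction; the result holds verbatim for any Lie algebra $\lie{g}$ with an ideal $\lie{a}$ such that both $\lie{g}$ and $\lie{g}/\lie{a}$ are unimodular.

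Concretely, I would fix any vector space basis of $\lie{a}$ and extend it to a basis of $\lie{g}$ via a chosen linear complement. Because $\lie{a}$ is an ideal, $\op{ad}_x$ carries $\lie{a}$ into $\lie{a}$ for every $x \in \lie{g}$, so in this basis $\op{ad}_x$ has block upper-triangular form
\[
    \op{ad}_x = \begin{pmatrix} \op{ad}_x|_{\lie{a}} & * \\ 0 & \overline{\op{ad}_x} \end{pmatrix},
\]
where $\overline{\op{ad}_x}$ is the induced operator on the quotient $\lie{g}/\lie{a}$. A standard check (or by the short exact sequence of Lie algebras) identifies $\overline{\op{ad}_x}$ with $\op{ad}_{\bar{x}}$ for $\bar{x}$ the image of $x$ in $\lie{g}/\lie{a}$. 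Hence
\[
    \op{tr}\bigl(\op{ad}_x\bigr) = \op{tr}\bigl(\op{ad}_x|_{\lie{a}}\bigr) + \op{tr}\bigl(\op{ad}_{\bar{x}}\bigr).
\]

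The left-hand side vanishes because $\lie{g}$ is unimodular, and the second term on the right vanishes because $\lie{g}/\lie{a}$ is unimodular. It follows that $\op{tr}(\op{ad}_x|_{\lie{a}}) = 0$ for every $x \in \lie{g}$, which is the desired tracelessness of the $\lie{g}$-action on $\lie{a}$. There is no real obstacle here; the only thing to be careful about is confirming that the induced action of $x$ on $\lie{g}/\lie{a}$ is genuinely the adjoint action of $\bar{x}$, which is immediate from the definition of the bracket on the quotient.
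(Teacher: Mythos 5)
Your proof is correct and is essentially the same as the paper's: both decompose $\op{tr}\op{ad}_{\lie{g}}$ as $\op{tr}\op{ad}_{\lie{g}/\lie{a}} + \op{tr}\op{ad}_{\lie{a}}$ via the block upper-triangular form coming from the ideal, and conclude from the two unimodularity hypotheses. Your remark that solvability is not needed is also accurate.
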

\begin{proof}
    Suppose $g \in \lie{g}$.
    Since $\lie{g}$ is unimodular, $\op{tr} \op{ad}_{\lie{g}} g = 0$.
    Since $\lie{g}/\lie{a}$ is unimodular, $\op{tr} \op{ad}_{\lie{g}/\lie{a}} g = 0$.
    The conclusion follows from
        \[ \op{tr} \op{ad}_{\lie{g}}
        = \op{tr} \op{ad}_{\lie{g}/\lie{a}} + \op{tr} \op{ad}_{\lie{a}} . \qedhere \]
\end{proof}

Armed with this,
let $\lie{g}$ be a unimodular solvable Lie algebra that we hope to write as an extension;
the proof of Prop.~\ref{prop:solvable_extension_problem}
divides into the following two almost-independent cases.

\begin{proof}[Proof of Prop.~\ref{prop:solvable_extension_problem}(i)
        (the non-nilpotent case)]
    The derived algebra of a finite-dimensional solvable Lie algebra
    is contained in the nilradical
    (see e.g.\ Chevalley's theorem, \cite[II.7 Thm.~13]{jacobson}),\footnote{
        Alternatively, one could use Lie's theorem that $\lie{g}$ has a faithful
        representation as upper-triangular matrices.
    }
    so $\lie{g}/\nilrad(\lie{g})$ is abelian, and thus unimodular.
    Then $\lie{g}/\nilrad(\lie{g})$
    is some $\R^k$ acting tracelessly on $\nilrad \lie{g}$
    (Lemma \ref{lemma:tracelessly}).
    Since $k = 0$ or $k = n$ would make $\lie{g}$ nilpotent,
    $0 < k < n$.
\end{proof}

\begin{proof}[Proof of Prop.~\ref{prop:solvable_extension_problem}(ii)
        (the nilpotent case)]
    Since $\lie{g}$ is nilpotent, $[\lie{g},\lie{g}] \neq \lie{g}$.
    Then any proper vector subspace $\lie{n}$ of $\lie{g}$
    containing $[\lie{g}, \lie{g}]$ is a nilpotent ideal.
    Taking $\lie{n}$ to be of codimension $1$ makes $\lie{g}/\lie{n} \cong \R$,
    so $\lie{g}$ is an extension
        \[ 0 \to \lie{n} \to \lie{g} \to \R \to 0 . \]
    As an extension of $\R$, this splits (any section of $\lie{g} \to \R$
    as a linear map is immediately a homomorphism);
    so $\lie{g} \cong \lie{n} \semisum \R$.
    This action is traceless by Lemma \ref{lemma:tracelessly}.
\end{proof}

\subsubsection{Nilradicals}

This section proves the following restriction on nilradicals of $\lie{g}$.
\begin{prop} \label{prop:solvable_nilalternative}
    The nilradical of a $5$-dimensional unimodular solvable Lie algebra
    is either $\R^3$ or of dimension at least $4$.
\end{prop}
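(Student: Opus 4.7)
The plan is a case analysis on $\lie{n} := \nilrad \lie{g}$. Since any nonzero solvable Lie algebra has a nonzero abelian (hence nilpotent) ideal at the end of its derived series, $\dim \lie{n} \geq 1$; and since the only $3$-dimensional real nilpotent Lie algebras are $\R^3$ and the Heisenberg algebra $\lie{n}_3$, it suffices to rule out $\dim \lie{n} \in \{1, 2\}$ and $\lie{n} \cong \lie{n}_3$.

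In each case I would derive a contradiction by exhibiting a nilpotent ideal of $\lie{g}$ not contained in $\lie{n}$, contradicting maximality of the nilradical. The main tool is the centralizer $C$ of $\lie{n}$ in $\lie{g}$: as the centralizer of an ideal, $C$ is itself an ideal; it contains $Z(\lie{n})$ as a central subalgebra; and its image $C/(C \cap \lie{n})$ in the abelian quotient $\lie{g}/\lie{n} \cong \R^k$ (Prop.~\ref{prop:solvable_extension_problem}(i)) is automatically abelian. Thus $C$ is a central extension of an abelian algebra by $Z(\lie{n})$, and is therefore nilpotent.

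To force $C$ to be large I would use that $\lie{g}/\lie{n}$ acts tracelessly on $\lie{n}$, giving a homomorphism $\R^k \to \op{sout}(\lie{n})$ whose image is abelian. For $\lie{n} = \R$, $\op{sout}(\lie{n}) = 0$, so $\lie{n}$ is central, $\lie{g} = C$ is nilpotent, and $\lie{g} = \nilrad \lie{g}$ contradicts $\dim \lie{n} = 1 < 5$. For $\lie{n} = \R^2$, $\op{sout}(\lie{n}) = \lie{sl}_2(\R)$, whose maximal abelian subalgebras are $1$-dimensional Cartans, forcing $\dim C \geq 4$ and so $C \supsetneq \lie{n}$. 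For $\lie{n} = \lie{n}_3$, a direct computation in a basis $\{x_1, x_2, x_3\}$ with $[x_2, x_3] = x_1$ yields $\dim \op{der}(\lie{n}_3) = 6$, $\dim \op{ad}(\lie{n}_3) = 2$, and $\op{sout}(\lie{n}_3) \cong \lie{sl}_2(\R)$, so again the image in $\op{sout}$ is at most $1$-dimensional; using the identity $\ker(\lie{g} \to \op{out}(\lie{n})) = \lie{n} + C$, this gives $\dim C \geq 2$, while $C \cap \lie{n}_3 = Z(\lie{n}_3)$ has dimension $1$, so $C$ is a nilpotent ideal not contained in $\lie{n}_3$.

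The main obstacle will be the $\lie{n} = \lie{n}_3$ subcase where the map $\R^2 \to \lie{sl}_2(\R)$ has image of dimension exactly $1$. Here $\dim C = 2$, so $C$ is not strictly larger than $\lie{n}_3$; instead, the contradiction is that $C$ is a $2$-dimensional Lie algebra with nontrivial center $Z(\lie{n}_3) \subseteq Z(C)$, which forces $C \cong \R^2$ (ruling out the other $2$-dimensional Lie algebra $\op{aff}(1)$, which has trivial center), so $C$ is a $2$-dimensional nilpotent ideal meeting $\lie{n}_3$ in only a $1$-dimensional subspace.
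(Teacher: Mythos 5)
Your proof is correct, and it bottoms out in the same obstruction as the paper's: the traceless (outer) derivation algebra of the candidate nilradical is $\lie{sl}_2\R$ (for both $\R^2$ and $\lie{n}_3$), which has no abelian subalgebra of dimension $\geq 2$. But the engine is genuinely different. The paper's key lemma is Lemma~\ref{lemma:abelianembedding}: for any linear section $f$ of $\lie{g} \to \lie{g}/\nilrad\lie{g} \cong \R^k$, the map $\op{ad}|_{\nilrad\lie{g}} \circ f$ into $\op{der}(\nilrad\lie{g})$ is injective; each small nilradical is then excluded by showing such an injection cannot exist (by raw dimension count for $\R$, by non-solvability of $\lie{sl}_2\R$ for $\R^2$, and via a chosen splitting $\op{sout}(\lie{n}_3) \to \op{der}(\lie{n}_3)$ for $\lie{n}_3$). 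You instead package the "enlarge the nilradical" move into the centralizer $C = C_{\lie{g}}(\lie{n})$: it is an ideal with $[C,C] \subseteq C \cap \lie{n} = Z(\lie{n}) \subseteq Z(C)$, hence a $2$-step nilpotent ideal, hence contained in $\nilrad\lie{g} = \lie{n}$; meanwhile the identity $\ker(\lie{g} \to \op{out}\lie{n}) = \lie{n} + C$ plus the $1$-dimensional bound on abelian subalgebras of $\lie{sl}_2\R$ (using Lemma~\ref{lemma:tracelessly} to land in $\op{sout}$ rather than all of $\op{out}$, which matters since $\lie{gl}_2\R$ does have $2$-dimensional abelian subalgebras) forces $\dim C > \dim(C \cap \lie{n})$, a contradiction. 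What this buys you: the $\lie{n}_3$ case is handled by the same computation as the abelian cases, with no need to choose a section of $\op{sder}(\lie{n}_3) \to \op{sout}(\lie{n}_3)$ or to worry about elements acting by nonzero inner derivations. Two small remarks: your worry in the last paragraph is unnecessary, since the contradiction never required $C \supsetneq \lie{n}_3$, only $C \not\subseteq \lie{n}_3$ (a nilpotent ideal must lie in the nilradical), and you had already shown $C$ nilpotent in general, so the detour identifying $C \cong \R^2$ is redundant; and not every maximal abelian subalgebra of $\lie{sl}_2\R$ is a Cartan (the nilpotent line is one too), though the dimension bound you use is correct.
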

Nilradicals of dimension $4$ are classified later, in Prop.~\ref{prop:nil_4d}.
The upcoming proof makes use of one technical lemma---that the action
of $\lie{g}/\nilrad(\lie{g})$ on $\nilrad(\lie{g})$ is somehow morally
as good as faithful.
\begin{lemma} \label{lemma:abelianembedding}
    Suppose $\lie{g}$ is a finite-dimensional solvable Lie algebra
    with $\lie{g}/\nilrad(\lie{g}) \cong \R^k$.
    If $f: \R^k \to \lie{g}$
    is any section of the quotient map as a map of vector spaces,
    then
        \[ \op{ad}|_{\nilrad(\lie{g})} \circ f: \R^k \to \op{der}(\nilrad(\lie{g})) \]
    is injective.
\end{lemma}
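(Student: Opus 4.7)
The plan is to prove the stronger statement that every element of $\lie{g}$ centralizing $\nilrad(\lie{g})$ already lies in $\nilrad(\lie{g})$. Injectivity of $\op{ad}|_{\nilrad(\lie{g})} \circ f$ then follows at once: if $f(v)$ centralizes $\nilrad(\lie{g})$, then $f(v) \in \nilrad(\lie{g})$, and since $f$ is a section of the quotient map $\lie{g} \to \lie{g}/\nilrad(\lie{g}) \cong \R^k$, this forces $v = 0$.

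To establish the containment, I would take $x \in \lie{g}$ with $[x, \nilrad(\lie{g})] = 0$, set $\lie{n} := \nilrad(\lie{g})$ and $\lie{n}' := \lie{n} + \R x$, and show that $\lie{n}'$ is a nilpotent ideal of $\lie{g}$; maximality of the nilradical then gives $x \in \lie{n}$. The subspace $\lie{n}'$ is a subalgebra, since $\lie{n}$ is and $x$ centralizes $\lie{n}$. To see $\lie{n}'$ is an ideal, the essential input is Chevalley's theorem (already cited in the proof of Prop.~\ref{prop:solvable_extension_problem}(i)): it gives $[\lie{g}, \lie{g}] \subseteq \lie{n}$, whence $[y, x] \in \lie{n}$ for every $y \in \lie{g}$, and then $[\lie{g}, \lie{n}'] \subseteq [\lie{g}, \lie{n}] + [\lie{g}, \R x] \subseteq \lie{n} \subseteq \lie{n}'$. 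For nilpotency, either $x \in \lie{n}$ already, or $\lie{n}' \cong \lie{n} \oplus \R x$ as a Lie algebra (a genuine direct sum, since $x$ centralizes $\lie{n}$), and a direct sum of nilpotent algebras is nilpotent.

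The only real obstacle is recognizing Chevalley's theorem as the key input; without it there is no reason for the bracket $[y, x]$ with arbitrary $y \in \lie{g}$ to land inside $\lie{n}'$, and the ideal step collapses. Once that is in hand, the remainder is immediate from the definitions of subalgebra, ideal, and nilpotent Lie algebra.
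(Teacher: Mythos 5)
Your proof is correct and takes essentially the same approach as the paper: adjoin the centralizing element(s) to $\nilrad(\lie{g})$, use $[\lie{g},\lie{g}] \subseteq \nilrad(\lie{g})$ to see the enlargement is an ideal and the trivial action to see it is nilpotent, then invoke maximality of the nilradical. The only cosmetic difference is that you adjoin one element at a time (so the enlargement is literally a direct sum $\lie{n} \oplus \R x$), whereas the paper adjoins the image of the whole kernel at once and runs a short induction $\lie{h}^{i+1} \subseteq \nilrad(\lie{g})^{i}$ to get nilpotency.
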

\begin{proof}
    Let $\lie{k}$ be the kernel of this map, and
    let $\lie{h}$ be the vector subspace
    $\nilrad(\lie{g}) + f(\lie{k})$ of $\lie{g}$.
    Since $\lie{g}/\nilrad(\lie{g})$ is abelian,
        \[ [\lie{g}, \lie{g}] \subseteq \nilrad \lie{g} \subseteq \lie{h} ;  \]
    so $\lie{h}$ is an ideal.
    This inclusion also implies
    $[\lie{h}, \lie{h}] \subseteq \nilrad \lie{g}$,
    which is the base case for
    the following induction that shows
    $\lie{h}$ is nilpotent.
    \begin{align*}
        \lie{h}^{i+1} = [\lie{h}, \lie{h}^i]
            &\subseteq [\nilrad(\lie{g}) + f(\lie{k}), \nilrad(\lie{g})^{i-1}]
            \subseteq \nilrad(\lie{g})^i + 0
    \end{align*}
    Then $\lie{h} = \nilrad \lie{g}$ by the definition of
    the nilradical, so $\lie{k} = 0$.
\end{proof}

Lemma \ref{lemma:abelianembedding} will also be used in a later section
for the classification in the case $\nilrad \lie{g} = \R^3$.
For now, its role is to provide constraints on dimension in
the proof of Prop.~\ref{prop:solvable_nilalternative}.

\begin{proof}[Proof of Prop.~\ref{prop:solvable_nilalternative}]
    The two nilpotent Lie algebras of dimension $3$ are $\R^3$ and $\lie{n}_3$
    (see e.g.\ \cite[Lec.~10]{fultonharris}, \cite[Table I]{patera},
    or \cite[Table 21.3]{maccallum});
    so it will suffice to show that $\nilrad \lie{g}$
    has dimension at least $3$ and is not $\lie{n}_3$.

    \paragraph{Step 1: $\nilrad \lie{g}$ has dimension at least $3$.}
    This follows from a bound by Mubarakzyanov
    on the dimension of the nilradical (see e.g.\ \cite[Thm.\ 2.5.2]{onishchik3})
    but can also be proven directly, as follows.

    First, $\nilrad(\lie{g}) \neq 0$ since that would imply
    $\lie{g} = \lie{g}/\nilrad(\lie{g}) = \R^5$, which has nilradical $\R^5$.
    Also, $\nilrad \lie{g} \ncong \R$ since Lemma \ref{lemma:abelianembedding}
    would then require some injective linear map
        \[ \lie{g}/\nilrad(\lie{g}) \cong \R^4 \to
        \op{der} \R \cong \lie{gl}_1 \R \cong \R . \]

    Similarly, if $\dim \nilrad \lie{g} = 2$, then any linear section
    of $\lie{g} \to \lie{g}/\nilrad(\lie{g}) \cong \R^3$ would induce
    an injective linear map $\R^3 \to \lie{gl}_2 \R$.
    In fact this map would have to land in $\lie{sl}_2 \R$ since
    unimodularity of $\lie{g}$ and $\lie{g}/\nilrad(\lie{g}) \cong \R^k$
    requires $\lie{g}$ to act tracelessly (Lemma \ref{lemma:tracelessly}).
    Then $\lie{sl}_2 \R$ would occur as a subalgebra
    of $\lie{g}$---in which case $\lie{g}$ would not be solvable
    since every term of its derived series would contain $\lie{sl}_2 \R$.
    Hence $\dim \nilrad \lie{g} \neq 2$.

    \paragraph{Step 2: The traceless outer derivation algebra $\op{sout}(\lie{n}_3)$
        is the isomorphic image of
        some $\lie{sl}_2 \R \subset \op{der}(\lie{n}_3)$.}
    Let $\lie{n}_3$ have basis $x_1, x_2, x_3$
    where $x_1$ is central and $[x_3,x_2] = x_1$.

    A derivation $D: \lie{n}_3 \to \lie{n}_3$ induces a linear map
    $\R x_2 + \R x_3 \to \lie{n}_3 / (\R x_2 + \R x_3) \cong \R x_1$.
    These are in bijection with the inner derivations, so up to subtracting an inner
    derivation $D(\R x_2 + \R x_3) \subset \R x_2 + \R x_3$. Then
        \[ Dx_1 = D[x_3,x_2] = [x_3, Dx_2] + [Dx_3,x_2], \]
    so relative to the basis $\{x_1, x_2, x_3\}$, the matrix of $D$ is
    \[ \begin{pmatrix}
        a+d &   & \\
            & a & b \\
            & c & d
    \end{pmatrix} . \]
    If $D$ is traceless, then $a + d = 0$.
    Then $\op{sout}(\lie{n}_3) \cong \lie{sl}_2 \R$,
    and a section of $\op{sder}(\lie{n}_3) \to \op{sout}(\lie{n}_3)$
    is given by the above matrix.

    \paragraph{Step 3: $\lie{n}_3$ is not $\nilrad(\lie{g})$.}
    An extension
        \[ 0 \to \lie{n}_3 \to \lie{g} \to \R^2 \to 0 \]
    defines a map $\R^2 \to \op{sout}(\lie{n}_3)$ by lifting to $\lie{g}$
    and taking brackets. Composition with the section from Step 2
    produces a homomorphism $\phi: \R^2 \to \lie{sl}_2 \R$.
    Since $\lie{sl}_2 \R$ admits no $2$-dimensional abelian subalgebras
    (any such would make $[\cdot,\cdot]: \Lambda^2 \lie{sl}_2 \R \to \lie{sl}_2 \R$
    fail to be surjective),
    $\phi$ has nonzero kernel.
    Then $\lie{n}_3$ cannot be the nilradical of $\lie{g}$---since if it were,
    Lemma \ref{lemma:abelianembedding} would require $\phi$ to be injective.
\end{proof}

\subsubsection{The elimination of non-split extensions}

The last main ingredient in Prop.~\ref{prop:semidirect_5d}
is the elimination of non-split extensions of $\R^2$ by $\R^3$
and extensions with actions other than the one specified, as follows.
\begin{prop} \label{prop:split_r3}
    Suppose $G = G/\{1\}$ is a maximal model geometry of dimension $5$
    and $\lie{g}$ is its Lie algebra. If $\nilrad \lie{g} \cong \R^3$,
    then $\lie{g}$ is the semidirect sum $\R^3 \semisum \R^2$
    where $\R^2$ acts by traceless diagonal matrices.
\end{prop}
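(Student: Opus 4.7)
The plan is to produce the split extension structure in three stages: first realize $\lie{g}$ as an extension of $\R^2$ by $\R^3$, then analyze the induced action $\phi\colon \R^2 \to \lie{sl}_3\R$, and finally show the extension splits via Lie algebra cohomology. Prop.~\ref{prop:solvable_extension_problem}(i) gives a short exact sequence
\[ 0 \to \R^3 \to \lie{g} \to \R^k \to 0 \]
with $\R^k$ acting tracelessly, and $\dim \lie{g} = 5$ forces $k = 2$. For any linear section $f\colon \R^2 \to \lie{g}$, Lemma~\ref{lemma:abelianembedding} makes $\phi := \op{ad}|_{\R^3} \circ f\colon \R^2 \to \lie{gl}_3 \R$ injective, and Lemma~\ref{lemma:tracelessly} lands its image in $\lie{sl}_3\R$. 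The image is abelian because $[f(v), f(w)]$ lies in the abelian ideal $\R^3$ and therefore acts trivially on $\R^3$.

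Next I would pin down $\phi(\R^2)$ up to $\op{GL}_3\R$-conjugacy. If some $\phi(v)$ were nonzero and nilpotent, then $\R^3 + \R f(v)$ would be a $4$-dimensional ideal---closed under brackets because $[f(\R^2), f(v)] \subset \R^3$---that is nilpotent by Engel's theorem, contradicting $\nilrad \lie{g} = \R^3$. So every nonzero element of $\phi(\R^2)$ is semisimple, making $\phi(\R^2)$ a Cartan subalgebra of $\lie{sl}_3\R$. Over $\R$ there are exactly two conjugacy classes: the split Cartan $\{\op{diag}(a,b,c) : a+b+c=0\}$ and a non-split Cartan whose elements have one real and a conjugate pair of non-real eigenvalues. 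The non-split case is ruled out by the model geometry hypothesis: a lattice in $G$ projects to a lattice $L \subset \R^2$, and for $v$ in a $\Z$-basis of $L$ the matrix $e^{\phi(v)}$ must preserve a full-rank lattice in $\R^3$, hence have integer characteristic polynomial. Writing $\phi(v) = a(v)\, D + b(v)\, R$ in a standard basis of the non-split Cartan, this polynomial is $(x - e^{-2a(v)})(x^2 - 2 e^{a(v)}\cos b(v)\, x + e^{2a(v)})$, and since $e^{-2a(v)}$ and $e^{2a(v)}$ are positive integer reciprocals they must each equal $1$, forcing $a(v) = 0$. Applied to two $\R$-independent lattice generators, this forces the linear form $a$ to vanish identically, contradicting the injectivity of $\phi$.

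In the remaining split case, $\phi(\R^2)$ is exactly the full space of traceless diagonal matrices, acting on $\R^3$ with three pairwise distinct nonzero weights $\alpha_1, \alpha_2, \alpha_3$. A Chevalley--Eilenberg computation identifies $H^*(\R^2, V_{\alpha_i})$ with the cohomology of the Koszul complex on the nonzero element $\alpha_i \in (\R^2)^*$, which is acyclic; hence $H^2(\R^2, \R^3) = 0$, the extension splits, and $\lie{g} \cong \R^3 \rtimes \R^2$ with the prescribed action. The main obstacle will be the non-split Cartan step---one must verify that the integrality constraint really forces $a(v)=0$ for every lattice generator and that no subtler choice of lattice sidesteps it---but the reciprocity $e^{2a(v)} \cdot e^{-2a(v)} = 1$ dispatches it cleanly.
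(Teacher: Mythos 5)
Your overall architecture is reasonable and in one respect leaner than the paper's (which explicitly classifies all six abelian planes in $\lie{sl}_3\R$ in Lemma~\ref{lemma:r2_actions} and then disposes of them case by case), but the step you yourself identify as the main obstacle---eliminating the non-split Cartan---does not work as written. You assert that integrality of the characteristic polynomial of $e^{\phi(v)}$ forces $e^{2a(v)}$ and $e^{-2a(v)}$ to be ``positive integer reciprocals.'' There is no reason for either to be a rational integer: the characteristic polynomial is a monic integer cubic with constant term $-1$ that may well be irreducible over $\Q$, in which case its real root is an irrational algebraic unit. Concretely, the companion matrix of $x^3+x^2-1$ lies in $\op{SL}(3,\Z)$, preserves $\Z^3$, and equals $e^{\phi(v)}$ for an element $v$ of a non-split Cartan with $a(v)=\tfrac12\log(1.3247\ldots)\neq 0$ (one real eigenvalue in $(0,1)$, two non-real conjugates of modulus $>1$). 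So a single lattice element never forces $a(v)=0$. To kill this case via lattices you would need to use both generators of the rank-$2$ lattice $L$ simultaneously and appeal to Dirichlet's unit theorem (a cubic order with a complex place has unit rank $1$, so no rank-$2$ commuting family of such matrices preserves a common lattice). The paper sidesteps this entirely: for the non-split Cartan the group is $(\C\times\R)\rtimes\R^2$, rotation of the $\C$ factor gives an $S^1\subseteq\op{Aut}(G)$ commuting with the action, and $G/\{1\}$ is subsumed by $G\rtimes S^1/S^1$---it is \emph{maximality}, not the model condition, that excludes this case.

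A second, smaller gap: from ``no nonzero element of $\phi(\R^2)$ is nilpotent'' you conclude ``every nonzero element is semisimple.'' That dichotomy is not exhaustive for individual elements; one could a priori have a mixed Jordan type $J_2(\lambda)\oplus(-2\lambda)$ with $\lambda\neq 0$. This is repairable: the centralizer in $\lie{sl}_3\R$ of such a matrix is exactly $2$-dimensional and contains the nonzero nilpotent $E_{12}$ (in the Jordan basis), so a $2$-dimensional abelian subalgebra containing such an element would equal that centralizer and hence contain a nonzero nilpotent, which you have already excluded---but the argument needs to be supplied. The remaining components (the extension setup via Prop.~\ref{prop:solvable_extension_problem} and Lemma~\ref{lemma:abelianembedding}, the nilpotent-ideal argument, and the Koszul-complex computation showing $H^2(\R^2;\R^3)=0$ in the split-Cartan case) are correct and agree in substance with the paper's Steps 1--2 and Lemma~\ref{lemma:solvable_r2_by_r3_cohomology}.
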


The proof makes use of two main computations:
the classification of faithful actions $\R^2 \hookrightarrow \lie{sl}_3 \R$
(Lemma \ref{lemma:r2_actions})
and the classification of extensions using Lie algebra cohomology
(Lemma \ref{lemma:solvable_r2_by_r3_cohomology}).
These are carried out below, followed by the proof of Prop.~\ref{prop:split_r3}.

\begin{lemma} \label{lemma:r2_actions}
    Up to linear changes of coordinates in $\R^3$ and $\R^2$,
    there are six embeddings $\phi: \R^2 \to \lie{sl}_3 \R$;
    each sends $(x,y) \in \R^2$ to one of the following matrices.
    Blank entries are zero.
    \begin{align*}
        &\begin{pmatrix}
            0 & x & y \\
              & 0 & x \\
              &   & 0
        \end{pmatrix}
        &
        &\begin{pmatrix}
            0 &   & y \\
              & 0 & x \\
              &   & 0
        \end{pmatrix}
        &
        &\begin{pmatrix}
            0 & x & y \\
              & 0 & \\
              &   & 0
        \end{pmatrix}
        &
        &\begin{pmatrix}
            x & y & \\
              & x & \\
              &   & -2x
        \end{pmatrix}
        &
        &\begin{pmatrix}
            x & y & \\
            -y & x & \\
              &   & -2x
        \end{pmatrix}
        &
        &\begin{pmatrix}
            x &   & \\
              & y & \\
              &   & -x-y
        \end{pmatrix}
    \end{align*}
\end{lemma}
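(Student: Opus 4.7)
The plan is to classify the image $V = \phi(\R^2)$ as a $2$-dimensional abelian subalgebra of $\lie{sl}_3 \R$, up to $\op{GL}(3,\R)$-conjugation; the $\op{GL}(2,\R)$ freedom on the source merely reparameterizes $V$. My strategy is to pick $A \in V$ of the most restrictive real Jordan type available and put it into real Jordan canonical form via a coordinate change on $\R^3$. Any other $B \in V$ then lies in the centralizer $Z(A) \cap \lie{sl}_3 \R$, and residual conjugations preserving $A$, together with the freedom to add a multiple of $A$ to $B$, normalize a complementary generator.

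First I split off the case in which $V$ contains a non-nilpotent element. When some $A \in V$ is non-nilpotent, its real Jordan type falls into four possibilities. If $A$ has three distinct real eigenvalues then $Z(A)$ is the Cartan, whose traceless part is $2$-dimensional and yields the sixth form. If $A$ has a complex-conjugate pair of eigenvalues plus a real one, then over $\R$ the centralizer is a $3$-dimensional subalgebra $\cong \C \oplus \R$ with $2$-dimensional traceless part, giving the fifth form. If $A$ has eigenvalues $\lambda,\lambda,-2\lambda$ with $\lambda \neq 0$ and a size-$2$ Jordan block, then $Z(A)$ is again $3$-dimensional with $2$-dimensional traceless part, producing the fourth form. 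The remaining sub-case is $A$ semisimple with eigenvalues $\lambda, \lambda, -2\lambda$; here $A$ is central in $Z(A) \cong \lie{gl}_2 \R \oplus \R$, so generic $A' \in V$ has a distinct Jordan type, and a short argument shows $A + sB$ cannot remain of this degenerate type for all $s$ without forcing $B \in \R A$. Thus this sub-case reduces to one of the previous three.

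In the all-nilpotent case, Engel's theorem provides a basis of $\R^3$ in which $V$ consists of strictly upper-triangular matrices. Writing $\phi(v) = a(v) E_{12} + b(v) E_{13} + c(v) E_{23}$ for linear functionals $a, b, c : \R^2 \to \R$, the abelian condition $[\phi(v), \phi(w)] = 0$ reduces to $a(v)c(w) = c(v)a(w)$, which forces $a$ and $c$ to be linearly dependent as functionals on $\R^2$. Three sub-cases emerge: $c$ a nonzero scalar multiple of $a$ (rescaling gives the first form), $a = 0$ (second form), and $c = 0$ (third form). In each, injectivity of $\phi$ forces the two remaining functionals to span $(\R^2)^*$, and a coordinate change on $\R^2$ normalizes them to $x$ and $y$.

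The main obstacle is verifying that the six normal forms are pairwise non-conjugate under $\op{GL}(3,\R)$. Forms $4$--$6$ contain non-nilpotent elements while forms $1$--$3$ do not, separating those two groups. Within the non-nilpotent forms, the Jordan type of a generic element distinguishes them: three distinct real eigenvalues (form $6$), a complex-conjugate pair plus a real eigenvalue (form $5$), and repeated eigenvalues with a size-$2$ Jordan block (form $4$). Within the nilpotent forms, form $1$ is characterized by containing an element with a single Jordan block of size $3$ (its generic elements have rank $2$); forms $2$ and $3$ have only elements of rank $\leq 1$, and are separated by $\dim \bigcap_v \ker \phi(v)$, which equals $2$ for form $2$ and $1$ for form $3$.
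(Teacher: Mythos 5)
Your proof is correct and follows essentially the same route as the paper's: normalize one element of the image to (real) Jordan form, split into the nilpotent and non-nilpotent cases, and identify the image with the traceless part of a centralizer, disposing of the degenerate diagonalizable $\op{diag}(\lambda,\lambda,-2\lambda)$ sub-case by passing to a better element of the image. The only notable variations are that you handle the all-nilpotent case via Engel's theorem and linear dependence of the functionals $a$ and $c$ rather than by listing nilpotent Jordan forms and their centralizers, and your invariant $\dim\bigcap_v \ker\phi(v)$ separating the second and third forms is actually sharper than the paper's appeal to ``ranks and nullities,'' since every nonzero element of either of those two forms has rank $1$ and nullity $2$.
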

\begin{proof}
    Name the above embeddings $\phi_1$ through $\phi_6$,
    and let $\{e_1, e_2\}$ be a basis for $\R^2$.
    Suppose $\phi: \R^2 \to \lie{sl}_3 \R$ is an embedding.
    The strategy is to find the Jordan form for $\phi(e_1)$
    and determine what matrices commute with it in $\lie{sl}_3 \R$.

    \paragraph{Case 1: $\phi(\R^2)$ contains no matrices with nonzero eigenvalues.}
    Changing coordinates to put $\phi(e_1)$ in Jordan form
    (or something like it), and then computing centralizers, either
    \begin{align*}
        &\left\{\begin{array}{rcl}
        \vspace{1em}
        \phi(e_1) & = & \begin{pmatrix}
            0 & 1 & \\
              & 0 & 1 \\
              &   & 0
        \end{pmatrix} \\
        \phi(e_2) & = & \begin{pmatrix}
            a & b & c \\
              & a & b \\
              &   & a
        \end{pmatrix}\end{array}\right\}
        & &\text{or} &
        &\left\{\begin{array}{rcl}
        \vspace{1em}
        \phi(e_1) & = & \begin{pmatrix}
            0 &   & 1 \\
              & 0 & \\
              &   & 0
        \end{pmatrix} \\
        \phi(e_2) & = & \begin{pmatrix}
            a & b & c \\
              & e & d \\
              &   & a
        \end{pmatrix}\end{array}\right\}.
    \end{align*}
    Since the image of $\phi$ consists of traceless matrices
    with no nonzero eigenvalues, either
    \begin{align*}
        \phi &= \phi_1
        & &\text{ or } &
        \phi(e_2) &= \begin{pmatrix} 0 & b & c \\ & 0 & d \\ & & 0 \end{pmatrix} .
    \end{align*}
    In the latter case, by replacing $e_2$ with $e_2 - ce_1$ we may assume
    $c = 0$. Then if both $b$ and $d$ are nonzero, we may rescale coordinates
    in $\R^3$ to obtain $\phi_1(e_1)$ (so $\phi$ is conjugate to $\phi_1$).
    Otherwise, $\phi$ is conjugate to $\phi_2$ or $\phi_3$.

    The embeddings $\phi_1$, $\phi_2$, and $\phi_3$ are distinct,
    distinguished by the ranks and nullities
    of the matrices in their images.

    \paragraph{Case 2: $\phi(\R^2)$ contains a matrix with a nonzero eigenvalue.}
    Change coordinates in $\R^2$ to assume $\phi(e_1)$ has $1$ as
    an eigenvalue. Then it must have one of the following Jordan forms,
    for some $y \in \R$.
    \begin{align*}
        &\begin{pmatrix}
            1 & y & \\
              & 1 & \\
              &   & -2
        \end{pmatrix}
        &
        &\begin{pmatrix}
            1 & y & \\
            -y & 1 & \\
              &   & -2
        \end{pmatrix}
        &
        &\begin{pmatrix}
            1 &   & \\
              & y & \\
              &   & -1-y
        \end{pmatrix}
    \end{align*}
    Computing centralizers shows that $\phi$ is conjugate to
    $\phi_4$, $\phi_5$, or $\phi_6$, respectively.
    (If $\phi(e_1)$ is diagonal with two identical diagonal entries,
    then some element of its centralizer isn't diagonal
    with two identical diagonal entries but still
    has one of the above forms.)

    To show that none of these are conjugate to each other,
    observe that $\phi_4$, $\phi_5$, and $\phi_6$ each
    send $(2,1) \in \R^2$ to a matrix in Jordan form which
    is not the Jordan form of a matrix in the image
    of any other $\phi_i$.
\end{proof}

The following definition is needed for
Lemma \ref{lemma:solvable_r2_by_r3_cohomology}'s computation of cohomology;
a survey can be found in \cite{wagemann}
or \cite[\S 2--4]{alekseevsky_nonsuper}\footnote{
    An almost identical version, generalized to super (i.e.\ $\Z/2\Z$-graded)
    Lie algebras, has been published as \cite{alekseevsky_super}.
}.
\begin{defn}[\textbf{Lie algebra cohomology}, following {\cite[\S 2]{wagemann}}]
    \label{defn:liecoho}
    Let $M$ be a module of a Lie algebra $\lie{g}$ over a field $k$.
    The \keyword{Chevalley-Eilenberg complex} is the cochain complex
	is the cochain complex
		\[ C^p(\lie{g}, M) = \op{Hom}_k(\Lambda^p \lie{g}, M) \]
	with boundary maps
	\begin{align*}
		d_p: C^p &\to C^{p+1} \\
		(d_p c)(x_1, \ldots, x_{p+1})
			&= \sum_{1 \leq i < j \leq p+1} (-1)^{i+j}
            c\left( [x_i, x_j], x_1, \ldots, \hat{x_i}, \ldots, \hat{x_j}, \ldots, x_{p+1} \right) \\
				&\quad + \sum_{1 \leq i \leq p+1} (-1)^{i+1}
                    x_i c\left( x_1, \ldots, \hat{x_i}, \ldots, x_{p+1} \right)
	\end{align*}
    where $\hat{x_i}$ means $x_i$ should be omitted.
    The cohomology of $\lie{g}$ with coefficients in $M$
    is defined to be the cohomology of this complex
    and denoted $H^p(\lie{g}, M)$.
\end{defn}

\begin{lemma} \label{lemma:solvable_r2_by_r3_cohomology}
    If $\R^2$ acts on $\R^3$ via $\phi: \R^2 \to \lie{sl}_3 \R$,
    then $H^2(\R^2; \R^3) \cong \R^3/\phi(\R^2)(\R^3)$.
\end{lemma}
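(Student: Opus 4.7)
The plan is to unpack the Chevalley-Eilenberg complex directly, exploiting that $\R^2$ is abelian and $2$-dimensional so that most of the complex collapses.

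First I would fix a basis $e_1, e_2$ of $\R^2$. Since $\Lambda^3 \R^2 = 0$, the complex terminates at $C^2$: we have $C^2 = \op{Hom}(\Lambda^2 \R^2, \R^3) \cong \R^3$ via the evaluation $c \mapsto c(e_1, e_2)$, and $C^3 = 0$, so $Z^2(\R^2; \R^3) = C^2 \cong \R^3$. Thus the only real content is the identification of the coboundaries $B^2 = \op{im} d_1$ with the submodule $\phi(\R^2)(\R^3) \subseteq \R^3$.

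Next I would compute $d_1$ explicitly. For a cochain $c \in C^1$, determined by the values $v_1 := c(e_1)$ and $v_2 := c(e_2)$ in $\R^3$, the formula in Defn.~\ref{defn:liecoho} reads
\[
    (d_1 c)(e_1, e_2) = -c([e_1,e_2]) + e_1 \cdot c(e_2) - e_2 \cdot c(e_1).
\]
Since $\R^2$ is abelian, the bracket term vanishes, leaving $(d_1 c)(e_1,e_2) = \phi(e_1) v_2 - \phi(e_2) v_1$ under the identification with $\R^3$. Therefore
\[
    \op{im} d_1 = \{\phi(e_1) v_2 - \phi(e_2) v_1 : v_1, v_2 \in \R^3\} = \phi(e_1)(\R^3) + \phi(e_2)(\R^3) = \phi(\R^2)(\R^3),
\]
the last equality because $\phi(\R^2) = \R\phi(e_1) + \R\phi(e_2)$ and linear combinations of images are images of linear combinations. (The $\subseteq$ inclusion follows from setting one of $v_1, v_2$ to zero; the $\supseteq$ inclusion is immediate.)

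Putting the two pieces together yields
\[
    H^2(\R^2; \R^3) = Z^2/B^2 = \R^3 / \phi(\R^2)(\R^3).
\]
There is no genuine obstacle here: the argument is purely a matter of reading off the Chevalley-Eilenberg differentials for a $2$-dimensional abelian Lie algebra. The only point that deserves a sentence of care is verifying that $\phi(e_1)(\R^3) + \phi(e_2)(\R^3)$ coincides with the submodule $\phi(\R^2)(\R^3)$ appearing in the statement, which is immediate from linearity of $\phi$.
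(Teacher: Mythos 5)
Your proposal is correct and follows essentially the same route as the paper: observe $C^3 = 0$ so $Z^2 = C^2 \cong \R^3$ via evaluation at $e_1 \wedge e_2$, compute $d_1$ (the bracket term vanishing by abelianness), and identify $\op{im} d_1$ with the subspace $\phi(e_1)(\R^3) + \phi(e_2)(\R^3) = \phi(\R^2)(\R^3)$. The only wording to tighten is ``linear combinations of images are images of linear combinations''---a sum of images need not be a single image, but the relevant equality of \emph{subspaces} is immediate exactly as you indicate by setting one of $v_1, v_2$ to zero.
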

\begin{proof}
    All $3$-cochains are zero since $\Lambda^3 \R^2 = 0$,
    so every $2$-cochain is a cocycle.
    A $2$-cochain is a map $\Lambda^2 \R^2 \to \R^3$,
    which can be recovered from the image
    of a spanning element $e_1 \wedge e_2 \in \Lambda^2 \R^2$;
    this identifies the $2$-cocycles with $\R^3$.

    It then suffices to identify the $2$-coboundaries with $\phi(\R^2)(\R^3)$.
    First, for all $1$-cochains $c$,
        \[ (dc)(x,y) = \phi(x)c(y) - \phi(y)c(x) \in \phi(\R^2)(\R^3) . \]
    Conversely, given $u \in \R^2$ and $w \in \R^3$,
    take $u' \in \R^2$ linearly independent from $u$
    and define a $1$-cochain $c$ by $c(u) = 0$ and $c(u') = w$.
    Then
        \[ (dc)(u, u') = \phi(u) c(u') = \phi(u)(w) . \qedhere \]
\end{proof}

The proof of Prop.~\ref{prop:split_r3}---that only one extension of
$\R^2$ by $\R^3$ produces a maximal geometry with trivial isotropy and
nilradical $\R^3$---is within reach, now that the above data can be
used to describe the extensions in just enough detail to rule most
of them out.

\begin{proof}[Proof of Prop.~\ref{prop:split_r3}]
    Suppose $G = G/\{1\}$ is a maximal model geometry of dimension $5$
    and $\lie{g}$ is its Lie algebra, with $\nilrad \lie{g} \cong \R^3$.
    We have already established that $\lie{g}$ is an extension
        \[ 0 \to \R^3 \to \lie{g} \to \R^2 \to 0 \]
    where $\R^2$ acts faithfully and tracelessly
    by lifting to $\lie{g}$ and taking brackets
    (Prop.~\ref{prop:solvable_extension_problem} and Lemma \ref{lemma:abelianembedding}).

    \paragraph{Step 1: Use $H^2(\R^2; \R^3)$ to classify extensions.}
    For each of the actions $\phi: \R^2 \hookrightarrow \lie{sl}_3 \R$
    (Lemma \ref{lemma:r2_actions}), we have by
    Lemma \ref{lemma:solvable_r2_by_r3_cohomology} that
        \[ H^2(\R^2; \R^3) \cong \R^3 / \phi(\R^2)(\R^3) . \]
    Explicitly, if $\phi_1, \ldots, \phi_6$ name the actions
    in Lemma \ref{lemma:r2_actions} and $\{e_1, e_2, e_3\}$
    is the standard basis of $\R^3$, then
    \begin{align*}
        H^2(\R^2; \R^3) &\cong \begin{cases}
                \R e_3 &\text{ if } \phi = \phi_1 \text{ or } \phi_2 \\
                \R e_2 + \R e_3 &\text{ if } \phi = \phi_3 \\
                0 &\text{ if } \phi = \phi_4, \phi_5, \text{ or } \phi_6 .
            \end{cases}
    \end{align*}
    Isomorphism classes of extensions of $\R^2$ by $\R^3$
    are in bijection with classes $[c] \in H^2(\R^2; \R^3)$
    \cite[Cor.~9]{alekseevsky_nonsuper}; so the extension
    with $\phi_6$ splits.

    Moreover, defining relations for $\lie{g}$ can be recovered
    by using the cocycle as an $\R^3$-valued bracket on $\R^2$
    \cite[Eqn.\ 5.5]{alekseevsky_nonsuper}.
    Explicitly, identify $\lie{g}$ as vector space with $\R^3 \oplus \R^2$;
    and for $k_i \in \R^3$ and $q_i \in \R^2$, define
        \[ [(k_1, q_1), (k_2, q_2)] = ( \phi(q_1)(k_2) - \phi(q_2)(k_1), 0 ) . \]

    \paragraph{Step 2: $\phi_1, \ldots, \phi_4$ produce the wrong nilradical.}
    Let $v = (0,1) \in \R^2$, and let $\lie{n} = \R^3 + \R v$.
    Then $\lie{n}$ is an ideal
    since $\lie{g}^2 \subseteq \R^3 \subseteq \lie{n}$.

    Since $v$ acts
    by a nilpotent matrix on $\R^3$, using the distributive
    law to compute $\lie{n}^4$ yields
        \[ \lie{n}^4 = [v, [v, [v, \R^3]]] = 0. \]
    Thus $\lie{n}$ is also nilpotent;
    so $\phi_1, \ldots, \phi_4$ do not produce $\lie{g}$
    where $\nilrad \lie{g} \cong \R^3$.

    \paragraph{Step 3: $\phi_5$ produces a non-maximal geometry.}
    If $\phi = \phi_5$, then $G \cong (\C \times \R) \rtimes \R^2$,
    where $(x,y) \in \R^2$ acts as scaling by $e^{x+iy}$ on $\C$
    and by $e^{-2x}$ on $\R$.
    The action of $S^1 \subset \C$ on $\C$ commutes with this,
    so $S^1 \subseteq \op{Aut}(G)$.
    Then $G/{1} \cong G \rtimes S^1 / S^1$.
\end{proof}

The case $\phi = \phi_6$ has $\R^2$ acting on $\R^3$ by traceless diagonal
matrices; the following section will show that it produces a model geometry.

\subsection{The \texorpdfstring{$\R^3 \rtimes \R^2$}{R3 \& R2} geometry}
\label{sec:solvable_r2_by_r3}

When point stabilizers are $0$-dimensional,
a geometry admitting a finite-volume quotient by isometries
is a Lie group admitting a lattice (Prop.~\ref{prop:geometries3}).
So to show this geometry is a model geometry, it suffices to prove the following.

\begin{prop} \label{prop:commutingmatrices}
    The Lie group $\R^3 \rtimes \R^2$, where $\R^2$
    acts on $\R^3$ as the diagonal matrices with determinant 1
    and positive eigenvalues, admits a lattice.
\end{prop}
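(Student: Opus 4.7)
The plan is to use algebraic number theory, specifically Dirichlet's unit theorem applied to a totally real cubic field, as hinted by the title of the proposition.

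First I would fix a totally real cubic number field $K/\Q$ (for instance the real subfield of $\Q(\zeta_7)$, or $\Q(\alpha)$ for $\alpha$ a root of $x^3-3x-1$). Let $\sigma_1,\sigma_2,\sigma_3:K\hookrightarrow\R$ be its three real embeddings and consider the diagonal embedding
\[ \iota: K \to \R^3, \qquad \alpha \mapsto (\sigma_1(\alpha), \sigma_2(\alpha), \sigma_3(\alpha)). \]
A standard computation with the discriminant shows that $\iota(\mathcal{O}_K)$ is a rank-$3$ lattice in $\R^3$.

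Next, multiplication by a unit $u\in\mathcal{O}_K^\times$ acts on $\iota(\mathcal{O}_K)$ as the diagonal matrix $\operatorname{diag}(\sigma_1(u),\sigma_2(u),\sigma_3(u))$, whose determinant is the norm $N(u)=\pm 1$. To land inside the identity component (positive diagonal matrices of determinant $1$), I would pass to the finite-index subgroup $U=(\mathcal{O}_K^\times)^2$ of squares: every $v=u^2$ is totally positive and has $N(v)=1$. By Dirichlet's unit theorem, $\mathcal{O}_K^\times$ has rank $r_1+r_2-1=2$, so $U$ is free abelian of rank $2$. The logarithmic embedding
\[ \ell:U\to\{(t_1,t_2,t_3)\in\R^3:t_1+t_2+t_3=0\}\cong\R^2, \qquad u\mapsto(\log\sigma_i(u))_i \]
is then injective with image a cocompact lattice in $\R^2$; this is the content of Dirichlet's theorem.

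Putting the pieces together, the semidirect product $\Gamma=\iota(\mathcal{O}_K)\rtimes U$ sits inside $G=\R^3\rtimes\R^2$ via $\iota$ on the first factor and $\ell$ on the second; it is a subgroup precisely because $U$ preserves $\iota(\mathcal{O}_K)$. Discreteness is immediate: $\iota(\mathcal{O}_K)$ is discrete in $\R^3$ and $\ell(U)$ is discrete in $\R^2$. Cocompactness follows because $\Gamma\backslash G$ fibers over the compact torus $\ell(U)\backslash\R^2$ with fiber the compact torus $\iota(\mathcal{O}_K)\backslash\R^3$, so $\Gamma$ is a lattice.

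The main conceptual step is the choice of $K$ totally real of degree $3$, which matches the signature of the $\R^2$-action (three real eigenvalues summing to zero) to the rank $r_1+r_2-1=2$ given by Dirichlet. The only mild technicality is making the units \emph{totally positive} of norm $+1$, which is handled uniformly by squaring; everything else is routine bookkeeping.
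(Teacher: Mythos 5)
Your proposal is correct and follows essentially the same route as the paper: a totally real cubic field, its ring of integers as the $\Z^3$, and a rank-$2$ group of squared units (via Dirichlet's unit theorem) acting with positive eigenvalues and determinant $1$. The paper's version is terser but relies on exactly the same ingredients; your added details on the logarithmic embedding and cocompactness are fine.
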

\begin{proof}
    By applying linear changes of coordinates in $\R^3$ and $\R^2$,
    it suffices to construct a group isomorphic to $\Z^3 \rtimes \Z^2$
    where the action is by diagonalizable matrices with positive eigenvalues.
    One can use the ring of integers of a number field, acted on
    by its group of units.

    If $K$ is a cubic number field, then its ring of integers $\mathcal{O}_K$
    is isomorphic as a group to $\Z^3$; and if $K$ is totally real,
    Dirichlet's unit theorem (see e.g.\ \cite[Thm.\ 7.4]{neukirch})
    implies that the group of units $\mathcal{O}_K^\times$ has rank
    $3 - 1 = 2$. So one may take $\mathcal{O}_K \rtimes 2U$ where
    $U \subseteq \mathcal{O}_K^\times$ is free abelian of rank $2$,
    and $2U$ consists of the squares in $U$
    so that the action will have positive eigenvalues.

    To obtain such a field, take $K = \Q[x]/(p(x))$
    where $p \in \Z[x]$ is a monic irreducible cubic
    with three distinct real roots.
\end{proof}
\begin{eg}
    Let $p(x) = 1 - 3x + x^3$, so $K \cong \Q[\alpha]$
    where $\alpha$ is any root of $p$. Then
    \begin{align*}
        \alpha(3 - \alpha^2) &= 3\alpha - \alpha^3 = 1 \\
        (1-\alpha)(2 - \alpha - \alpha^2) &= 2 - 3\alpha + \alpha^3 = 1 ,
    \end{align*}
    so $\alpha$ and $1 - \alpha$ are units in $\Z[\alpha] \subseteq \mathcal{O}_K$.

    To prove that $\alpha$ and $1-\alpha$ are independent in $\mathcal{O}_K^\times$,
    let $v_1,v_2,v_3 \in K \otimes_\Q \R$ be a basis of eigenvectors for
    multiplication by $\alpha$ on $K$ as a $\Q$-vector space. Let $U \subseteq \mathcal{O}_K^\times$
    be the subgroup generated by $\alpha$ and $1-\alpha$, and define a homomorphism
    \begin{align*}
        \phi: U &\to \R^3 \\
        a &\mapsto \big(\log |\lambda_1|, \log |\lambda_2|, \log |\lambda_3|\big)
            \text{ where } av_i = \lambda_i v_i .
    \end{align*}
    Then $\phi(\alpha) \approx (0.6, -1, 0.4)$ and $\phi(1-\alpha) \approx (1, -0.4, -0.6)$
    are linearly independent since they lie in non-opposite octants;
    so $U \cong \phi(U) \cong \Z^2$.

    The matrices by which $\Z^2$ acts on $\Z^3$
    are products of even powers of $\alpha$ and $1-\alpha$,
    expressed in the basis $(1, \alpha, \alpha^2)$. That is,
    the action $\Z^2 \to \op{Aut} \Z^3 = \op{SL}(3,\Z)$ is given by
    \begin{align*}
        x,y &\mapsto
            \begin{pmatrix}
                0 & 0 & -1 \\ 1 & 0 & 3 \\ 0 & 1 & 0
            \end{pmatrix}^{2x}
            \begin{pmatrix}
                1 & 0 & 1 \\ -1 & 1 & -3 \\ 0 & -1 & 1
            \end{pmatrix}^{2y} .
    \end{align*}
\end{eg}

\subsection{Classification of semidirect sums with \texorpdfstring{$\R$}{R}}
\label{sec:semidirect_classification}

Prop.~\ref{prop:semidirect_5d} has reduced the discovery of candidates
to a classification of semidirect products---that is,
classifying semidirect sums $\lie{n} \semisum \R$ where $\lie{n}$ is nilpotent
of dimension $4$.
The program followed here is to classify such $\lie{n}$,
determine possible actions of $\R$ on each $\lie{n}$,
and determine which semidirect sums produce model geometries
(i.e.\ are tangent to Lie groups admitting lattices).

\begin{prop}[see also {\cite[Table I]{patera}}] \label{prop:nil_4d}
    Any nilpotent Lie algebra $\lie{n}$ of dimension $4$
    is isomorphic to $\R^4$ or $\R \oplus \lie{n}_3$ or $\lie{n}_4$.
\end{prop}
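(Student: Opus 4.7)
The plan is to stratify by the nilpotency class of $\lie{n}$, using the lower central series $\lie{n} \supseteq \lie{n}^2 \supseteq \lie{n}^3 \supseteq \cdots$ where $\lie{n}^{i+1} = [\lie{n}, \lie{n}^i]$. Since each strict inclusion drops the dimension by at least one, the class is at most $3$ in dimension $4$, so the argument splits into the cases of class $1$, $2$, and $3$, which I expect to produce $\R^4$, $\R \oplus \lie{n}_3$, and $\lie{n}_4$ respectively.

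Class $1$ is immediate: $\lie{n}$ is abelian, so $\lie{n} \cong \R^4$. For class $2$, the bracket descends to a skew form $\omega \colon \Lambda^2(\lie{n}/Z(\lie{n})) \to Z(\lie{n})$ whose image is $\lie{n}^2$, since $\lie{n}^2 \subseteq Z(\lie{n})$. A case analysis on $d = \dim \lie{n}/Z(\lie{n})$ eliminates all but one possibility: $d \leq 1$ forces $\omega = 0$, hence $\lie{n}^2 = 0$, contradicting class $2$; $d = 4$ forces $Z(\lie{n}) = 0$, contradicting $\lie{n}^2 \subseteq Z(\lie{n})$ nonzero; and $d = 3$ gives $\dim Z(\lie{n}) = 1$, so $\omega$ is a nonzero skew form $\Lambda^2 \R^3 \to \R$, which has even rank and hence a $1$-dimensional radical whose lift to $\lie{n}$ would be central but not lie in $Z(\lie{n})$, a contradiction. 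Thus $d = 2$, yielding $\dim Z(\lie{n}) = 2$ and $\dim \lie{n}^2 = 1$; choosing $z$ spanning $\lie{n}^2$, $w \in Z(\lie{n}) \setminus \R z$, and $x, y \in \lie{n}$ with $[x, y] = z$ exhibits $\lie{n} \cong \R w \oplus \lie{n}_3$.

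For class $3$, the strict containments $\lie{n} \supsetneq \lie{n}^2 \supsetneq \lie{n}^3 \supsetneq 0$ combined with $\dim \lie{n}/\lie{n}^2 \geq 2$ (any nilpotent Lie algebra with cyclic abelianization is itself abelian) force $\dim \lie{n}/\lie{n}^2 = 2$ and $\dim \lie{n}^2/\lie{n}^3 = \dim \lie{n}^3 = 1$. Picking a basis $x_1, x_2, x_3, x_4$ adapted to this filtration (so $x_1$ spans $\lie{n}^3$, $x_2$ projects to a basis of $\lie{n}^2/\lie{n}^3$, and $x_3, x_4$ project to a basis of $\lie{n}/\lie{n}^2$), the brackets take the form $[x_4, x_3] = x_2 + b x_1$, $[x_4, x_2] = c x_1$, $[x_3, x_2] = d x_1$, with $\lie{n}^3 = \R x_1$ central (since $\lie{n}^4 = 0$). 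Because $\lie{n}^3 \neq 0$, at least one of $c, d$ is nonzero; after interchanging the roles of $x_3, x_4$ (and adjusting signs) if necessary, assume $c \neq 0$. A sequence of basis changes---replacing $x_3$ by $x_3 - (d/c) x_4$ to kill $d$, replacing $x_2$ by $x_2 + b x_1$ to kill $b$, and rescaling $x_1$ by $c$ to normalize $c = 1$---puts the relations into the form $[x_4,x_3] = x_2$, $[x_4, x_2] = x_1$, $[x_3, x_2] = 0$, matching the defining relations of $\lie{n}_4$.

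The main obstacle I anticipate is the class-$2$ dimension analysis, specifically the argument that a nonzero skew form $\Lambda^2 \R^3 \to \R$ must have a radical whose lift violates the centrality hypothesis; once that point is settled the class-$3$ case reduces to a short explicit basis-change computation.
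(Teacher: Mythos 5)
Your proof is correct and follows essentially the same route as the paper's: a case analysis driven by the lower central series, with explicit basis constructions in each case. The only differences are organizational---you stratify by nilpotency class rather than by $\dim [\lie{n},\lie{n}]$, and your class-$2$ case replaces the paper's direct basis-picking with a short argument about the radical of the induced skew form on $\lie{n}/Z(\lie{n})$---neither of which changes the substance.
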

\begin{proof}
    The dimension of $\lie{n}^2 = [\lie{n}, \lie{n}]$
    distinguishes the three Lie algebras above,
    so use this dimension to determine $\lie{n}$.
    \begin{itemize}
        \item $\lie{n}^2 \neq \lie{n}$ since $\lie{n}$ is nilpotent.
        \item If $\dim \lie{n}^2 = 3$, then pick
            $x \in \lie{n} \smallsetminus \lie{n}^2$. Then
                \[ \lie{n}^2
                    = (\R x + \lie{n}^2)^2
                    = [\R x, \lie{n}^2] + [\lie{n}^2, \lie{n}^2]
                    \subseteq \lie{n}^3 , \]
            so the lower central series stabilizes at $\lie{n}^2 \neq 0$,
            so this never occurs for nilpotent $\lie{n}$.
        \item If $\dim \lie{n}^2 = 2$, then $\dim \lie{n}^3$ is either $0$
            or $1$.
            \begin{itemize}
                \item If $\dim \lie{n}^3 = 0$, then $\lie{n}^2$ is central.
                    Pick nonzero:
                    \begin{align*}
                        x_3 &\notin \lie{n}^2 \\
                        x_2 &\notin \lie{n}^2 + \R x_3 \\
                        x_1 &\in \lie{n}^2 \text{ such that }
                            [x_3, x_2] = 0 \text{ or } x_1 \\
                        y &\in \lie{n}^2 \text{ completing these to a basis}.
                    \end{align*}
                    Then $\lie{n}$ is either $\R^4$ or $\R \oplus \lie{n}_3$.
                    (In fact, for both of these, $\lie{n}^2$ is too small.)
                \item If $\dim \lie{n}^3 = 1$, then choose nonzero:
                    \begin{align*}
                        x_1 &\in \lie{n}^3 \\
                        x_2 &\in \lie{n}^2 \smallsetminus \lie{n}^3 \\
                        x_4 &\in \lie{n} \smallsetminus \lie{n}^2
                            \text{ such that } [x_4, x_2] = x_1 \\
                        x_3 &\in \lie{n} \smallsetminus \lie{n}^2
                            \text{ such that } [x_4, x_3] = x_2 .
                    \end{align*}
                    Since $[x_3, x_2] \in \lie{n}^3$, it could be
                    any multiple of $x_1$. Replacing $x_3$ by an element
                    of $x_3 + \R x_4$ to make its bracket with $x_2$ zero
                    yields a basis demonstrating $\lie{n} \cong \lie{n}_4$.
            \end{itemize}
        \item If $\dim \lie{n}^2 = 1$, then $\lie{n}^2$ is central,
            as the last term in the lower central series.
            For any linear complement $V$ of $\lie{n}^2$,
            the Lie bracket induces $V \wedge V \to \lie{n}^2 \cong \R$,
            which is necessarily degenerate since $V$ has odd dimension.
            Then we can choose $x_1$ spanning $\lie{n}^2$,
            $x_2$ and $x_3$ such that $[x_3, x_2] = x_1$, and $y$
            demonstrating the degeneracy of $V \wedge V \to \lie{n}^2$.
            This basis demonstrates $\lie{n} \cong \R \oplus \lie{n}_3$.
        \item If $\dim \lie{n}^2 = 0$, then $\lie{n} \cong \R^4$.
    \end{itemize}
\end{proof}

\subsubsection{\texorpdfstring{$\R^4$}{R4} semidirect sums}
\label{semiwithabelian}

Suppose $G$ is a model geometry and $\lie{g} = \lie{n} \semisum_\phi \R$.
If $\lie{n} = \R^4$, a linear change of coordinates puts
the image of $1$ under $\phi: \R \to \lie{sl}_4 \R$ in Jordan form.
Listing Jordan forms, grouped by number of blocks, yields the following.
(Omitted entries are zero,
and ``$*$'' entries are subject only to the restriction that the
whole matrix is traceless.)
\begin{align*}
    \phi_{1}(1) &= \begin{pmatrix}
        0 & 1 & & \\ & 0 & 1 & \\ & & 0 & 1 \\ & & & 0
    \end{pmatrix} &
    \phi_{1'}(1) &= \begin{pmatrix}
        & \lambda & 1 & \\ -\lambda & & & 1 \\
        & & & \lambda \\ & & -\lambda &
    \end{pmatrix} \\
    \phi_{2}(1) &= \begin{pmatrix}
        \lambda & 1 & & \\ & \lambda & & \\
        & & -\lambda & 1 \\ & & & -\lambda
    \end{pmatrix} &
    \phi_{2'}(1) &= \begin{pmatrix}
        \lambda & 1 & & \\ & \lambda & 1 & \\
        & & \lambda & \\ & & & -3\lambda
    \end{pmatrix} \\
    \phi_{2''}(1) &= \begin{pmatrix}
        \lambda & \mu & & \\ -\mu & \lambda & & \\
        & & -\lambda & 1 \\ & & & -\lambda
    \end{pmatrix} &
    \phi_{2'''}(1) &= \begin{pmatrix}
        \lambda & \mu_1 & & \\ -\mu_1 & \lambda & & \\
        & & -\lambda & \mu_2 \\ & & -\mu_2 & -\lambda
    \end{pmatrix} \\
    \phi_{3}(1) &= \begin{pmatrix}
        \lambda & 1 & & \\ & \lambda & & \\ & & * & \\ & & & *
    \end{pmatrix} &
    \phi_{3'}(1) &= \begin{pmatrix}
        \lambda & \mu & & \\ -\mu & \lambda & & \\ & & * & \\ & & & *
    \end{pmatrix} \\
    \phi_4(1) &= \begin{pmatrix}
        * & & & \\ & * & & \\ & & * & \\ & & & *
    \end{pmatrix} \\
\end{align*}
With this, it becomes possible to classify maximal model geometries
of the form $\R^4 \rtimes \R$;
computing characteristic polynomials
recovers the list in Thm.~\ref{thm:main}(ii)(b)
from the following statement.
\begin{prop}[\textbf{Classification of $\R^4 \rtimes \R$ geometries}]
    \label{prop:semiwithabelian}
    ~
    \begin{enumerate}[(i)]
        \item If $G = G/\{1\}$ is a maximal model geometry with Lie algebra
            $\lie{g} = \R^4 \semisum_\phi \R$, then $\phi$ can be taken
            to be one of
            $\phi_1$,
            $\phi_2$ ($\lambda = 1$),
            $\phi_{2'}$ ($\lambda = 0$),
            $\phi_3$ ($\lambda = 0$), or
            $\phi_4$.
        \item All cases listed in (i) are model geometries,
            except that the group with Lie algebra $\R^4 \semisum_{\phi_4} \R$
            is a model geometry if $\op{exp} \phi_4(t)$
            has integer characteristic polynomial for some $t \neq 0$.
    \end{enumerate}
\end{prop}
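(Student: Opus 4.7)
The plan is to work from the Jordan-form enumeration $\phi_1, \phi_{1'}, \ldots, \phi_4$ displayed above, which already exhausts the possibilities for $\phi$ up to linear change of basis in $\R^4$. Part (i) amounts to eliminating those forms which fail to give a maximal model geometry, and part (ii) constructs lattices in the surviving cases.

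For part (i), the first mechanism of elimination is non-maximality via a commuting compact symmetry: if the centralizer of $\phi(1)$ in $\op{GL}(4,\R)$ contains a nontrivial compact connected subgroup $K$, then $K$ acts on $G = \R^4 \rtimes_\phi \R$ by Lie group automorphisms commuting with left translations, so $(G,G)$ is subsumed by $(G, G \rtimes K)$ and is not maximal. This rules out the primed forms $\phi_{1'}, \phi_{2''}, \phi_{2'''}, \phi_{3'}$, each of which has a rotation block $\bigl(\begin{smallmatrix} \lambda & \mu \\ -\mu & \lambda \end{smallmatrix}\bigr)$ with $\mu \neq 0$ commuting with a standard $\op{SO}(2)$. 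It also rules out $\phi_2$ with $\lambda = 0$, two identical nilpotent $2\times 2$ blocks whose centralizer contains the $\op{SO}(2)$ given by $\bigl(\begin{smallmatrix} \cos\theta\,I_2 & -\sin\theta\,I_2 \\ \sin\theta\,I_2 & \cos\theta\,I_2 \end{smallmatrix}\bigr)$, and $\phi_3$ in the subcases where the $*$-eigenvalues create a 2-dimensional eigenspace preserved by $\op{SO}(2)$.

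The second mechanism eliminates the remaining cases where $\R^4 \rtimes_\phi \R$ admits no lattice. A lattice requires $\exp(t\phi(1))$ to be conjugate in $\op{GL}(4,\R)$ to an element of $\op{SL}(4,\Z)$ for some $t \neq 0$, which in particular forces its characteristic polynomial to have integer coefficients. For $\phi_{2'}$ after rescaling $\R$ to set $\lambda = 1$, the characteristic polynomial of $\exp(tA)$ is $(x - e^t)^3(x - e^{-3t})$; writing $P = e^t + e^{-t}$, a short computation of the elementary symmetric polynomials forces $P^3 \in \Z$ (hence $P \in \Z$) and then $(4-P^2)e^t \in \Z$, which together forces $t = 0$. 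An analogous calculation rules out $\phi_3$ with $\lambda \neq 0$ and distinct $*$-eigenvalues. Together with overall rescaling of $\R$ (normalizing $\lambda = 1$ in $\phi_2$ and the $*$-eigenvalues to $\pm 1$ in $\phi_3(\lambda=0)$), this leaves exactly the five listed forms.

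For part (ii), lattices are exhibited in each surviving case. For the nilpotent cases $\phi_1$ and $\phi_{2'}(\lambda=0)$, the Lie algebra has rational structure constants in the standard basis, so Malcev's theorem provides a lattice. For $\phi_2(\lambda=1)$ and $\phi_3(\lambda=0)$, whose nontrivial eigenvalues of $\phi(1)$ are $\pm 1$, take $t = \log\bigl(\tfrac{3+\sqrt{5}}{2}\bigr)$ so that $e^t + e^{-t} = 3$; then $\exp(t\phi(1))$ has integer characteristic polynomial and is conjugate over $\R$ to an element of $\op{SL}(4,\Z)$ preserving a lattice $\Lambda \subset \R^4$, giving $\Lambda \rtimes t\Z \subset G$. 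For $\phi_4$, since $\phi_4(1)$ is diagonalizable, the $\R$-conjugacy class of $\exp(t\phi_4(1))$ is determined by its characteristic polynomial, so the integer-characteristic-polynomial condition is both necessary and sufficient for realization as an element of $\op{SL}(4,\Z)$, hence for the existence of a lattice. The main obstacle is the lattice-nonexistence argument in part (i): the elementary-symmetric-polynomial calculation for $\phi_{2'}$ and $\phi_3$ with $\lambda \neq 0$ is conceptually straightforward but must be carried out carefully enough to cover the several subcases, and this is what makes the "only these five" statement of (i) genuinely restrictive rather than merely a normalization.
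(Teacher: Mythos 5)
Your overall strategy matches the paper's: enumerate the Jordan forms $\phi_1,\ldots,\phi_4$, eliminate the forms with non-real eigenvalues (and $\phi_2$ with $\lambda=0$, and $\phi_3$ with a repeated $*$-eigenvalue) via a commuting $S^1 \subseteq \op{Aut}(G)$, use the integer-characteristic-polynomial criterion to force $\lambda = 0$ in $\phi_{2'}$ and $\phi_3$, and then exhibit lattices in the survivors. Two steps in your sketch, however, would not go through as written.

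First, your integrality computation for $\phi_{2'}$ is off. The eigenvalues of $\exp(t\phi_{2'}(1))$ are $e^{t\lambda}$ (triple) and $e^{-3t\lambda}$, so the quantity $P = e^t + e^{-t}$ is not natural here (it looks borrowed from the $\phi_2$ case, whose eigenvalues are $e^{t\lambda}, e^{-t\lambda}$ each doubled), and in any case ``$P^3 \in \Z$ hence $P \in \Z$'' is a non sequitur for a real number $P$. The paper's argument is cleaner and worth adopting: $e^{t\lambda}$ is a triple root of a degree-$4$ monic integer polynomial, so its minimal polynomial over $\Q$ is linear, so $e^{t\lambda} \in \Q$; the rational root theorem (constant term $1$) then gives $e^{t\lambda} = \pm 1$, hence $\lambda = 0$. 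The same double-root/minimal-polynomial argument handles $\phi_3$ with $\lambda \neq 0$.

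Second, in part (ii) you assert for $\phi_2(\lambda=1)$ and $\phi_3(\lambda=0)$ that an integer characteristic polynomial makes $\exp(t\phi(1))$ conjugate over $\R$ to an element of $\op{SL}(4,\Z)$. These matrices are not diagonalizable, and an integer matrix with the same characteristic polynomial need not have the same Jordan type (e.g.\ a direct sum of two companion matrices of $x^2-3x+1$ is semisimple, unlike $\exp(t\phi_2(1))$). You must exhibit an integer matrix with the \emph{matching Jordan form}; the paper does this explicitly, producing $\left(\begin{smallmatrix} C & C \\ 0 & C\end{smallmatrix}\right)$ with $C = \left(\begin{smallmatrix}2&1\\1&1\end{smallmatrix}\right)$ for $\phi_2$ and the block matrix $\op{diag}\bigl(\left(\begin{smallmatrix}1&1\\0&1\end{smallmatrix}\right), \left(\begin{smallmatrix}2&1\\1&1\end{smallmatrix}\right)\bigr)$ for $\phi_3$. (Your treatment of $\phi_4$, where diagonalizability makes the characteristic polynomial determine the conjugacy class among real-diagonalizable matrices, is fine, as are the Malcev appeals in the nilpotent cases.)
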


Note that some of the actions $\phi_i$
depend on parameters, and not all of the parameter values produce model geometries.
Fortunately, there is an easy-to-state necessary and sufficient
condition for model geometries arising in this case:
recall that $G/G_p$ is a model geometry if and only if some lattice
$\Gamma \subset G$ intersects no conjugate of $G_p$ nontrivially
(Prop.~\ref{prop:geometries3}(ii));
so in the case of $G_p = \{1\}$,
this reduces to the question of whether a lattice exists,
which is determined by the following condition.
\begin{lemma}[\hspace{1sp}{\cite[Cor.~6.4.3]{filipk}}] \label{mostowcor}
    A unimodular, non-nilpotent $\R^n \rtimes_{\op{exp} \phi} \R$
    admits a lattice if and only if
    there is $0 \neq t \in \R$ such that
    the characteristic polynomial of $\op{exp} \phi(t)$
    has coefficients in $\Z$.
\end{lemma}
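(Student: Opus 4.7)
The plan is to prove both directions of the equivalence, translating between lattices in $G$ and $A$-invariant lattices in $\R^n$, where $A := \exp(\phi(t_0))$ for a suitable $t_0 \neq 0$.

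For necessity, suppose $\Gamma \subset G$ is a lattice and let $N \subset G$ be the nilradical. The structure theorem for lattices in simply-connected solvable Lie groups (Mostow; see e.g.\ Raghunathan) gives that $\Lambda := \Gamma \cap N$ is a lattice in $N$ and that $\Gamma/\Lambda$ is a lattice in $G/N$. The assumption that $\lie{g}$ is non-nilpotent forces $\phi(t)$ to be non-nilpotent for every $t \neq 0$ (since $\phi$ is linear), so adjoining any nonzero element of $\phi(\R)$ to $\R^n$ produces a non-nilpotent ideal; hence $N = \R^n$. Thus $\Lambda \subset \R^n$ is a lattice and $\Gamma/\Lambda = t_0 \Z$ for some $t_0 \neq 0$. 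A direct calculation in the semidirect product shows that conjugation by any $\gamma \in \Gamma$ lifting $t_0$ acts on $\Lambda$ via $A = \exp(\phi(t_0))$, so $A \in \op{Aut}(\Lambda) \cong GL(n, \Z)$ and its characteristic polynomial has integer coefficients.

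For sufficiency, suppose $A = \exp(\phi(t_0))$ has characteristic polynomial $p(x) \in \Z[x]$. The plan is to construct a lattice $\Lambda \subset \R^n$ with $A\Lambda = \Lambda$; then $\Gamma := \Lambda \rtimes_A t_0 \Z \subset G$ is a lattice, a fundamental domain being the product of one for $\Lambda$ in $\R^n$ with $[0, t_0) \subset \R$. Unimodularity makes $\phi$ traceless, so $\det A = 1$ and $p(0) = \pm 1$. Producing $\Lambda$ amounts to exhibiting some $B \in GL(n, \Z)$ that is $GL(n, \R)$-conjugate to $A$: one then takes $\Lambda := P \cdot \Z^n$ for any $P$ satisfying $A = P B P^{-1}$.

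The primary technical obstacle is this conjugacy. When $A$ is non-derogatory (minimal polynomial equal to $p$), the companion matrix $C_p \in GL(n, \Z)$ is already $\R$-similar to $A$ and suffices. Otherwise, one builds $B$ block by block: for each irreducible factor $p_j(x) \in \Z[x]$ of $p$ and each Jordan block of $A$ of size $k$ over the roots of $p_j$, include a generalized Jordan block
\[
J(p_j, k) \;=\; \begin{pmatrix}
    C_{p_j} & I & & \\
    & C_{p_j} & \ddots & \\
    & & \ddots & I \\
    & & & C_{p_j}
\end{pmatrix} \in GL(k \deg p_j, \Z),
\]
whose Jordan normal form over $\C$ consists of one size-$k$ block per root of $p_j$. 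The block-diagonal assembly $B$ shares its characteristic polynomial and Jordan type with $A$, so $A$ is $\R$-similar to $B$. This yields $\Lambda$ and completes the proof.
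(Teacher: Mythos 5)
Your necessity argument is correct and is the standard one (the paper itself gives no proof of this lemma, citing Filipkiewicz instead): Mostow's theorem gives that $\Lambda=\Gamma\cap\R^n$ is a lattice in the nilradical, which equals $\R^n$ by non-nilpotency, the image of $\Gamma$ in $G/\R^n\cong\R$ is some $t_0\Z$ with $t_0\neq 0$, and conjugation by any lift of $t_0$ acts on $\Lambda$ by $\exp\phi(t_0)$, which therefore has integer characteristic polynomial.

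The gap is in sufficiency, at the sentence ``the block-diagonal assembly $B$ shares its characteristic polynomial and Jordan type with $A$.'' Your block $J(p_j,k)$ contributes a size-$k$ Jordan block at \emph{every} root of $p_j$, so your construction can only match $A$ if the Jordan block sizes of $A$ are identical at all roots of each irreducible factor (and even then one must take one $J(p_j,k)$ per Galois orbit of blocks rather than per block, or the dimensions overcount when $\deg p_j\geq 2$). That symmetry is not implied by the hypothesis, and it is a genuine obstruction: for any rational matrix the Jordan structures at Galois-conjugate eigenvalues must agree. Concretely, for $n=4$ let $\phi(1)$ have a size-$2$ Jordan block with eigenvalue $\mu=\ln\frac{3+\sqrt5}{2}$ and two size-$1$ blocks with eigenvalue $-\mu$; this is traceless and non-nilpotent, and for every nonzero integer $t$ the characteristic polynomial of $\exp\phi(t)$ is $(x^2-a_tx+1)^2\in\Z[x]$, where $a_t=\lambda^t+\lambda^{-t}$, $\lambda=e^{\mu}$. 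Yet $\exp\phi(t)$ is similar to no integer matrix, since $x^2-a_tx+1$ is irreducible over $\Q$ and $\exp\phi(t)$ has a size-$2$ block at $\lambda^t$ but not at its conjugate $\lambda^{-t}$; by your own necessity argument the group then admits no lattice at all. So integrality of the characteristic polynomial alone cannot produce the lattice, and the ``if'' direction needs an additional hypothesis such as $\exp\phi(t)$ being semisimple or non-derogatory---which covers your companion-matrix case, the only situations in which this paper actually invokes that direction (the diagonal family $\phi_4$), and the $3$-dimensional setting of Filipkiewicz's original corollary, where such asymmetric Jordan types with integral characteristic polynomial cannot occur.
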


\begin{proof}[Proof of Prop.~\ref{prop:semiwithabelian}]
    The complete list of actions of $\R$ on $\R^4$ would produce
    a long list of cases, so a first step will be to eliminate
    actions producing non-maximal geometries.
    Each remaining group is then examined to determine whether
    it admits a lattice. The cases are grouped by the number of Jordan blocks.

    \paragraph{Preparatory step: ignore non-maximal geometries.}
    If $\phi(1)$ has non-real eigenvalues,
    then it (and the $1$-parameter subgroup of $\op{SL}(4,\R)$ it generates)
    commutes with rotations on some $2$-dimensional eigenspace.
    These rotations form an $S^1 \subseteq \op{Aut}(G)$;
    so $G/\{1\}$ is not maximal, since it is
    subsumed by $G \rtimes S^1 / S^1$.
    This eliminates $\phi_{1'}$, $\phi_{2''}$, $\phi_{2'''}$, and $\phi_{3'}$.

    \paragraph{Case 1: $\phi(1)$ has 1 Jordan block.}
    Then $\phi = \phi_1$. Since
        \[ \op{exp} \phi_1(6) = \begin{pmatrix}
            1 & 6 & 18 & 36 \\ & 1 & 6 & 18 \\ & & 1 & 6 \\ & & & 1
        \end{pmatrix}\]
    has integer entries,
    its characteristic polynomial has integer coefficients;
    so the resulting $\R^4 \rtimes \R$ admits a lattice
    by Lemma \ref{mostowcor} above.
    In fact the lattice can be taken to be a subgroup isomorphic to
    $\Z^4 \rtimes_{\op{exp} \phi_1(6)} \Z$,
    with fundamental domain
    $\left\{\left((\op{exp} \phi_1(t))x, t\right)
    \mid t \in (0,6), x \in [0,1]^4
    \right\}$.

    \paragraph{Case 2a: $\phi = \phi_2$ (2 Jordan blocks).}
    If $\lambda = 0$ then $\phi_2(t)$ commutes
    with an $S^1$ of rotations
    (it coincides with the $\lambda = 0$ case for $\phi_{1'}$)
    and will not produce a maximal geometry.

    If instead $\lambda \neq 0$,
    then by rescaling $\phi_2$ and changing basis to put the new $\phi_2(1)$
    in Jordan form, we can assume $\lambda = 1$, so this produces at most
    one new geometry. Exponentiating yields the $1$-parameter subgroup
        \[ \left\{ \begin{pmatrix}
            e^t & te^t & & \\
            & e^t & & \\
            & & e^{-t} & te^{-t} \\
            & & & e^{-t}
        \end{pmatrix} : t \in \R \right\} \subset \op{SL}(\R^4) . \]
    Reordering the basis elements turns this into the block matrices
        \[
            \left\{ \begin{pmatrix}
                A(t) & tA(t) \\ & A(t)
            \end{pmatrix} : t \in \R,
            A(t) = \begin{pmatrix} e^t & \\ & e^{-t} \end{pmatrix} \right\}
            \subset \op{SL}(\R^4) .
        \]
    If $B$ diagonalizes $C = \begin{pmatrix} 2 & 1 \\ 1 & 1 \end{pmatrix}$, then
    after conjugating by the block diagonal matrix with diagonal blocks $B^{-1}$,
    this $1$-parameter subgroup contains an element of the form
    $\begin{pmatrix} C & sC \\ & C \end{pmatrix}$, with $s \in \R$.
    After rescaling the first two basis elements,
    we conclude as in Case 1 above that
    the Lie group $\R^4 \rtimes_{\op{exp} \phi_2} \R$
    admits a lattice isomorphic to $\Z^4 \rtimes_{A'} \Z$, where
        \[ A' = \begin{pmatrix} 2 & 1 & 2 & 1 \\ 1 & 1 & 1 & 1 \\
            & & 2 & 1 \\ & & 1 & 1 \end{pmatrix} .\]

    \paragraph{Case 2b: $\phi = \phi_{2'}$ (2 Jordan blocks).}
    We will prove in this case that the resulting $G = \R^4 \rtimes \R$
    is a model geometry if and only if $\lambda = 0$.

    By Lemma \ref{mostowcor} above, if $G$ is a model geometry
    then $\op{exp} \phi_{2'}(t)$ has characteristic
    polynomial $p(x) \in \Z[x]$ for some $t \neq 0$.
    Then $e^{t\lambda}$ is a triple root of $p$, so its minimal polynomial
    over $\Q$ divides $p$ at least $3$ times.
    Since $\deg p = 4$, the minimal polynomial of
    $e^{t\lambda}$ is linear; so $e^{t\lambda} \in \Q$.
    Since $e^{t\lambda}$ is a rational root of a polynomial whose first
    and last coefficients are $1$, the rational root theorem
    implies $e^{t\lambda} = \pm 1$. Then since $t \neq 0$
    and $\lambda$ is real, $\lambda = 0$.

    Conversely, $\lambda = 0$ then $\op{exp} \phi_{2'}(2)$ has integer
    entries; so $G$ admits the lattice $\Z^4 \rtimes_{\op{exp} \phi_{2'}(2)} \Z$.
    In the notation of \cite[\S 6.4.6]{filipk}, this gives the geometry $G_3 \times \R$.

    \paragraph{Case 3: $\phi = \phi_3$ (3 Jordan blocks).}
    Again we prove that the resulting $\R^4 \rtimes \R$
    is a model geometry if and only if $\lambda = 0$;
    moreover, this produces only one geometry.

    If a model geometry results, then $\op{exp} \phi_3(t)$
    has characteristic polynomial $p(x) \in \Z[x]$ for some $t \neq 0$.
    Then $e^{t\lambda}$ is a double root of $p(x)$,
    so its minimal polynomial over $\Q$ divides $p(x)$ at least twice.
    If $e^{t\lambda} \notin \Q$, then the two eigenvalues of $\phi_3(1)$
    that aren't $\lambda$ are identical; so the last two coordinates
    admit an $S^1$ of rotations, making $G$ non-maximal.
    Then $e^{t\lambda} \in \Q$;
    so as in Case 2b, $\lambda = 0$ by the rational root theorem.

    If $\lambda = 0$, then $\phi_3$
    and the first basis vector of $\R^4$ can be rescaled so that
        \[ \phi_3(1) = \begin{pmatrix}
            0 & 1 & & \\ & 0 & & \\ & & \alpha & \\ & & & -\alpha
        \end{pmatrix} \]
    where $\alpha = \ln \frac{3 + \sqrt{5}}{2}$.
    (If the diagonal entries were all zero,
    then the last two coordinates would admit an $S^1$ of rotations.)
    Exponentiating this yields a $1$-parameter subgroup of $\op{SL}(\R^4)$
    containing a matrix $A$ that in some basis becomes
        \[ \begin{pmatrix}
            1 & 1 & & \\ & 1 & & \\ & & 2 & 1 \\ & & 1 & 1
        \end{pmatrix} , \]
    so $G$ admits a lattice isomorphic to $\Z^4 \rtimes_A \Z$.

    \paragraph{Case 4: $\phi = \phi_4$ (4 Jordan blocks).}
    The criterion claimed in (ii) for producing a model geometry
    is merely Lemma \ref{mostowcor} above.
    This completes the proof of Prop.~\ref{prop:semiwithabelian}.
\end{proof}

\begin{rmk}[\textbf{Alternative parametrization of the $\phi = \phi_4$ case}]
    The $3$-dimensional family of maps $\phi_4$
    produces a $2$-dimensional family of groups $\R^4 \rtimes_{\op{exp} \phi_4} \R$
    (due to the ability to rescale the $\R$ factor),
    but not all are model geometries---by Lemma \ref{mostowcor} above,
    $\op{exp} \phi_4(t)$ has to have an integer characteristic polynomial
    $p(x) = x^4 + ax^3 + bx^2 + cx + 1 \in \Z[x]$
    for some nonzero $t$.
    Allowing for rescaling the $\R$ factor,
    one could instead parametrize this family of geometries
    by the coefficients $(a,b,c) \in \Z^3$; but the correspondence
    is not a bijection due to the following.
    \begin{enumerate}[(i)]
        \item Since $\phi_4(t)$ has $4$ real eigenvalues,
            $p(x)$ must have $4$ nonnegative real roots.
        \item Since duplicate eigenvalues of $\phi_4(t)$ would
            make $\op{exp} \phi_4(t)$ commute with an $S^1$ of rotations,
            the roots of $p(x)$ must be distinct
            if it comes from a maximal geometry.
        \item If $\op{exp} \phi_4(t)$ has integer characteristic polynomial
            (i.e.\ is an integer matrix in some basis),
            then so does $\op{exp} \phi_4(nt)$ for any $n \in \Z$;
            so each geometry corresponds to a $\Z$-family of polynomials.
    \end{enumerate}
    In \cite[\S{6.4.8}]{filipk},
    Filipkiewicz gave a detailed description of this kind
    of parametrization for the analogous family $\Sol^4_{m,n}$
    in dimension $4$.

\end{rmk}

\subsubsection{\texorpdfstring{$\lie{n}_4$}{n4} semidirect sums}

The classification of $\lie{n}_4 \semisum \R$ geometries
has a slightly different flavor from the $\R^4 \semisum \R$ case---there
are fewer derivations of $\lie{n}_4$ than of $\R^4$;
but the tradeoff is that they require a bit more work to find.
The result is the following.

\begin{prop}[\textbf{Classification of $\lie{n}_4 \semisum \R$ geometries}] \label{seminilfour}
    Suppose $G = G/\{1\}$ is a model geometry whose Lie algebra $\lie{g}$ is of the form
    $\lie{n}_4 \semisum \R$ and not expressible as $\R^4 \semisum \R$. Then
    $\lie{g}$ has basis $\{x_1,x_2,x_3,x_4,x_5\}$ with
    \begin{align*}
        [x_4, x_2] &= x_1 &
        [x_4, x_3] &= x_2 &
        [x_5, x_3] &= x_1 &
        [x_5, x_4] &= 0 \text{ or } x_3 ,
    \end{align*}
    and all other brackets not determined by skew-symmetry are zero.
    Both Lie algebras thus described are the Lie algebras of model geometries.
\end{prop}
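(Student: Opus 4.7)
The plan is to classify traceless outer derivations $D \in \op{sout}(\lie{n}_4)$ (up to the action of $\op{Aut}(\lie{n}_4)$ and rescaling of the $\R$ factor), then eliminate cases that either fail to produce a lattice or express $\lie{g}$ as $\R^4 \semisum \R$ (already treated in the previous subsection). First I would compute $\op{der}(\lie{n}_4)$: any derivation preserves the characteristic subspaces $Z(\lie{n}_4) = \vecspan{x_1}$, $[\lie{n}_4,\lie{n}_4] = \vecspan{x_1,x_2}$, and the centralizer $\vecspan{x_1,x_2,x_3}$ of the derived subalgebra, and the Jacobi expansions of $D[x_4,x_3] = Dx_2$ and $D[x_4,x_2] = Dx_1$ cut $\op{der}(\lie{n}_4)$ down to a $7$-parameter family. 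Subtracting the $3$-dimensional space of inner derivations and imposing tracelessness via Lemma~\ref{lemma:tracelessly} leaves a $3$-parameter family for $\op{sout}(\lie{n}_4)$; a convenient representative is
\[ Dx_1 = \tfrac{2}{3}\mu\, x_1,\quad Dx_2 = -\tfrac{1}{3}\mu\, x_2,\quad Dx_3 = a\, x_1 - \tfrac{4}{3}\mu\, x_3,\quad Dx_4 = b\, x_3 + \mu\, x_4 \]
with parameters $(a,b,\mu) \in \R^3$.

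The non-nilpotent branch $\mu \neq 0$ has $D$ acting with distinct real eigenvalues proportional to $(2,-1,-4,3)$, so $\nilrad \lie{g} = \lie{n}_4$ and $\lie{g} \not\cong \R^4 \semisum \R$; nevertheless I would show that no lattice exists. A lattice in $G = \Nil^4 \rtimes_{\exp(tD)} \R$ would project to a lattice in the abelianization $\lie{n}_4/[\lie{n}_4,\lie{n}_4] \cong \R^2$, on which $D$ acts with eigenvalues $-\tfrac{4}{3}\mu$ and $\mu$. The determinant of $\exp(cD)$ restricted to the abelianization is $e^{-c\mu/3}$; since both this and its reciprocal must be integers (as $\exp(cD)$ induces an automorphism of the abelianized lattice), we obtain $c\mu = 0$, contradicting $c,\mu \neq 0$.

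In the nilpotent branch $\mu = 0$, the parameters $(a,b)$ transform under the rescaling automorphism $x_1 \mapsto c^2 d\, x_1,\ x_2 \mapsto cd\, x_2,\ x_3 \mapsto d\, x_3,\ x_4 \mapsto c\, x_4$ combined with $x_5 \mapsto e\, x_5$ as $(a,b) \mapsto (ea/c^2,\ ecb/d)$. If $a = 0$ then $\vecspan{x_1,x_2,x_3,x_5}$ is an abelian ideal of $\lie{g}$, exhibiting $\lie{g}$ as $\R^4 \semisum \R$ and so excluding the case by hypothesis. If $a \neq 0$, normalizing $a = 1$ and rescaling $b$ to $\{0, 1\}$ yields exactly the two Lie algebras of the statement. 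Both are nilpotent with integer structure constants, so Malcev's criterion provides lattices in the corresponding simply-connected groups (explicitly, the $\Z$-span of the basis under Baker--Campbell--Hausdorff).

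The chief obstacle I anticipate is the non-nilpotent branch, where ruling out a continuous $1$-parameter family of candidate Lie algebras requires the arithmetic observation that the abelianized action has non-integer determinant once $c\mu \neq 0$; the remaining bookkeeping around inner derivations, tracelessness, and the rescaling torus is routine.
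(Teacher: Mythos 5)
Your proposal is correct and follows essentially the same route as the paper: compute $\op{sout}(\lie{n}_4)$ (your three-parameter family matches the paper's $D_{4,a,b,c}$ up to renaming), kill the non-nilpotent branch by a lattice obstruction, identify the $a=0$ case as $\R^4 \semisum \R$ via the abelian ideal $\vecspan{x_1,x_2,x_3,x_5}$, normalize the remaining parameters to $(1,0)$ and $(1,1)$, and invoke Malcev for lattices. The only (immaterial) difference is that the paper rules out the non-nilpotent case by requiring the action on the characteristic subalgebra $Z(\lie{n}_4)$ to be traceless, whereas you apply the same determinant-$\pm 1$ argument to the characteristic quotient $\lie{n}_4/[\lie{n}_4,\lie{n}_4]$; both rest on the same Mostow--Raghunathan lattice-inheritance facts.
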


Most of the proof lies in describing a derivation $D$ that makes
$\lie{n}_4 \semisum_D \R$ the Lie algebra of a model geometry.
Predictably, this case begins with the computation of
the traceless outer derivation algebra $\op{sout}(\lie{n}_4)$
(Lemma \ref{lemma:n4_derivations}). This will be followed by an extra condition
on $D$ for model geometries (Lemma \ref{lemma:semisum_center_traceless})
and  the proof of Prop.~\ref{seminilfour}.
\begin{lemma} \label{lemma:n4_derivations}
    Every element of $\op{sout}(\lie{n}_4)$
    is represented by a matrix of the form
    \[
        D_{4,a,b,c} = \begin{pmatrix}
            2a & 0 & b & 0 \\
            0 & -a & 0 & 0 \\
            0 & 0 & -4a & c \\
            0 & 0 & 0 & 3a
        \end{pmatrix} ,
    \]
    with respect to the basis in Definition \ref{defn:nildefault}.
\end{lemma}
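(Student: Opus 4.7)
The plan is to compute $\op{der}(\lie{n}_4)$ by hand, then quotient by $\op{ad}(\lie{n}_4)$ and impose tracelessness, producing an explicit section $\op{sout}(\lie{n}_4) \to \op{sder}(\lie{n}_4)$ of the required form.

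First I would use that the center $Z(\lie{n}_4) = \R x_1$ and the derived ideal $\lie{n}_4^2 = \R x_1 + \R x_2$ are characteristic ideals: any derivation $D$ must preserve them, so $Dx_1 \in \R x_1$ and $Dx_2 \in \R x_1 + \R x_2$. Writing the remaining two columns as $Dx_3 = n_1 x_1 + n_2 x_2 + n_3 x_3 + n_4 x_4$ and $Dx_4 = r_1 x_1 + r_2 x_2 + r_3 x_3 + r_4 x_4$, a derivation is a priori determined by these entries together with the two nontrivial diagonal entries from $Dx_1$ and $Dx_2$.

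Next, applying the Leibniz rule to the three nontrivial defining brackets pins down the remaining structure. The identity $D[x_2, x_3] = 0$ forces $n_4 = 0$. The identity $D[x_4, x_3] = Dx_2$ forces the $(1,2)$ and $(2,3)$ entries of $D$ to agree (both equal to $n_2$) and expresses the $(2,2)$ entry as $r_4 + n_3$; then $D[x_4, x_2] = Dx_1$ gives the $(1,1)$ entry as $2r_4 + n_3$. The result is that $\op{der}(\lie{n}_4)$ is seven-dimensional, parametrized by $n_1, n_2, n_3, r_1, r_2, r_3, r_4$.

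For the inner derivations, $\op{ad}(\lie{n}_4)$ is three-dimensional since $\ker \op{ad} = Z(\lie{n}_4) = \R x_1$. A direct computation shows that $\op{ad}(x_4)$ has entries $1$ at positions $(1,2)$ and $(2,3)$, while $\op{ad}(x_2)$ and $\op{ad}(x_3)$ have lone entries $-1$ at $(1,4)$ and $(2,4)$ respectively. Subtracting an appropriate inner derivation therefore zeroes out $n_2$, $r_1$, and $r_2$, so every outer derivation has a representative whose only off-diagonal freedoms lie at positions $(1,3)$ and $(3,4)$; relabel these as $b$ and $c$. Finally, the traceless condition reads $4r_4 + 3n_3 = 0$; setting $n_3 = -4a$ and $r_4 = 3a$ then yields the diagonal $(2a, -a, -4a, 3a)$ that appears in $D_{4,a,b,c}$, completing the proof. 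The only part requiring real care is the Leibniz bookkeeping in the second step; the rest is direct linear algebra.
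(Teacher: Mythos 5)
Your proof is correct and takes essentially the same route as the paper: determine $\op{der}(\lie{n}_4)$ from the characteristic ideals plus the Leibniz rule, subtract inner derivations to clear the $(1,2)$, $(2,3)$, $(1,4)$, $(2,4)$ entries, and impose tracelessness to get the diagonal $(2a,-a,-4a,3a)$. The only cosmetic difference is that the paper derives upper-triangularity of $D$ from a third characteristic subspace $\{x \mid \dim[x,\lie{n}_4] < 2\}$, whereas you kill the potential $x_4$-component of $Dx_3$ via $D[x_2,x_3]=0$; both are fine.
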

\begin{proof} Suppose $D \in \op{der} \lie{n}_4$ is traceless.
    $\op{ad} \lie{n}_4$ consists of the matrices
        \[ \begin{pmatrix}
            0 & c & 0 & a \\
            0 & 0 & c & b \\
            0 & 0 & 0 & 0 \\
            0 & 0 & 0 & 0
        \end{pmatrix} . \]
    Any derivation $D$ is upper triangular since it preserves the following
    filtration by characteristic ideals.
        \begin{align*}
            \vecspan{x_1} &= Z(\lie{n}) \\
            \vecspan{x_1, x_2} &= \lie{n}_4^2 \\
            \vecspan{x_1, x_2, x_3} &= \{x \in \lie{n}_4 \mid \dim [x,\lie{n}_4] < 2 \}
        \end{align*}
    Hence, up to an inner derivation, we may write $D$ as
        \begin{align*}
            Dx_1 &= a_1 x_1 \\
            Dx_2 &= b_2 x_2 \\
            Dx_3 &= c_1 x_1 + c_2 x_2 + c_3 x_3 \\
            Dx_4 &= d_3 x_3 + d_4 x_4 .
        \end{align*}
    Using the Leibniz rule,
        \begin{align*}
            a_1 x_1 &= Dx_1 = [Dx_4, x_2] + [x_4, Dx_2] = (d_4 + b_2) x_1 \\
            b_2 x_2 &= Dx_2 = [Dx_4, x_3] + [x_4, Dx_3] = (d_4 + c_3) x_2 + c_2 x_1 ,
        \end{align*}
        so $c_2 = 0$ and $c_3 = b_2 - d_4 = a_1 - 2d_4$.
    Finally, if $D$ is traceless,
        \[ 0 = a_1 + b_2 + c_3 + d_4 = 3c_3 + 4d_4 . \qedhere \]
\end{proof}

\begin{lemma} \label{lemma:semisum_center_traceless}
    Suppose $G = G/\{1\}$ is a model geometry whose Lie algebra
    $\lie{g}$ is of the form $\lie{n} \semisum \R$ where $\lie{n}$ is nilpotent.
    Then $\R$ acts tracelessly on $Z(\lie{n})$.
\end{lemma}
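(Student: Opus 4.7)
The plan is to leverage the existence of a lattice; mere unimodularity of $G$ would only force $\op{tr} \phi = 0$ on all of $\lie{n}$, not on the potentially smaller ideal $Z(\lie{n})$. Lemma~\ref{lemma:n4_derivations} confirms this distinction can be nontrivial: for $\lie{n} = \lie{n}_4$, a traceless derivation can act by $2a \neq 0$ on $Z(\lie{n}_4) = \R x_1$. So some additional input is needed, and the lattice is the natural candidate.

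By Prop.~\ref{prop:geometries3}(ii), $G$ admits a lattice $\Gamma$. The nilpotent ideal $\lie{n}$ integrates to a closed, normal, simply-connected subgroup $N \subset G$, and the center $Z(N)$---being characteristic in $N$---is normal in $G$. Invoking the Mostow structure theory for lattices in simply-connected solvable Lie groups, $\Gamma \cap N$ is a lattice in $N$ and $\Gamma$ projects onto a nontrivial discrete cyclic subgroup $t_0 \Z \subset G/N \cong \R$; pick $\gamma \in \Gamma$ whose projection generates this subgroup. Applying Malcev's rationality theorem to the nilpotent pair $(N, \Gamma \cap N)$ and the characteristic subgroup $Z(N)$, the intersection $\Gamma \cap Z(N)$ is a lattice in $Z(N) \cong \R^k$.

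The remainder is routine. Writing $\gamma = n \cdot \exp(t_0 T)$ with $n \in N$ and $T$ a generator of the $\R$ factor, conjugation by $\gamma$ is an automorphism of $Z(N)$ preserving $\Gamma \cap Z(N)$. Via the identification $\exp: Z(\lie{n}) \xrightarrow{\sim} Z(N)$, $\op{Ad}(n)$ restricts to the identity on $Z(\lie{n})$ (since the inner derivation $\op{ad}(X)$ vanishes on $Z(\lie{n})$ for any $X \in \lie{n}$), so the conjugation becomes the linear map $\op{exp}(t_0 \phi|_{Z(\lie{n})})$. A linear automorphism of $\R^k$ preserving a lattice has integer matrix in a suitable basis, hence determinant $\pm 1$; being in the image of the exponential forces the determinant to be positive and thus equal to $1$. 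Therefore $\op{exp}(t_0 \op{tr} \phi|_{Z(\lie{n})}) = 1$, and since $t_0 \neq 0$, $\op{tr} \phi|_{Z(\lie{n})} = 0$.

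The main obstacle is the lattice-intersection step---passing from a lattice $\Gamma \subset G$ to lattices in $N$ and then $Z(N)$---which requires invoking Mostow/Malcev-type results whose setup is technical even when the conclusions are standard. One minor caveat: when $\phi$ acts nilpotently on $\lie{n}$ (so that $\lie{g}$ itself is nilpotent), the conclusion $\op{tr} \phi|_{Z(\lie{n})} = 0$ is automatic from nilpotence and the above apparatus is unnecessary; the substantive case is when $\lie{n}$ is the nilradical of $\lie{g}$, to which Mostow's theorem applies directly.
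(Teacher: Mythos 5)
Your argument is correct and follows essentially the same route as the paper: reduce to the non-nilpotent case where $\lie{n}=\nilrad\lie{g}$, use Mostow's theorem to get a lattice in $N$ and a nontrivial projection to $\R$, use the Malcev/Raghunathan result to get a lattice in $Z(N)$, and conclude from the fact that a lattice-preserving automorphism of $\R^k$ in the image of $\exp$ has determinant $1$. The only cosmetic difference is that the paper runs this as a contradiction (a determinant $>1$ map cannot preserve a lattice) while you compute the determinant directly as $\exp(t_0\op{tr}\phi|_{Z(\lie{n})})$.
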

\begin{proof}
    Since $Z(\lie{n})$ is a characteristic ideal, it is stable
    under the action of $\R$.
    Suppose $\R$ acts with nonzero trace on $Z(\lie{n})$.

    Then $\R$ acts non-nilpotently,
    so $\lie{n} = \nilrad \lie{g}$.
    Write $G = N \rtimes \R$ where $N$ is the simply-connected Lie group
    with Lie algebra $\lie{n}$.

    Since $G$ is a model geometry, it admits a lattice $\Gamma$.
    The nilradical of a solvable Lie group inherits lattices
    \cite[Lemma 3.9]{mostow}---that is,
    $N \cap \Gamma$ is a lattice in $N$,
    and $\Gamma/(N \cap \Gamma)$ is a lattice in $\R$.
    The latter implies that some element of $\Gamma$
    acts by conjugation with determinant $> 1$ on $Z(N)$.

    Since $Z(N)$ is a term in the upper central series of $N$,
    the intersection $Z(N) \cap \Gamma$ is a lattice in $Z(N)$
    \cite[Prop.~2.17]{raghunathan}.
    This is impossible since
    no lattice of $\R^k$ is stable under a linear map
    with determinant $> 1$.
\end{proof}

Given the above, the classification of model geometries with tangent algebra
of the form $\lie{n}_4 \semisum \R$ is within reach and will recover the list
in Thm.~\ref{thm:main}(ii)(c).
\begin{proof}[Proof of Prop.~\ref{seminilfour}]
    The action of $\R$ on $\lie{n}_4$ is represented by a derivation
    of the form described above in Lemma \ref{lemma:n4_derivations}.
    The following steps recover the Lie algebra defined
    in the statement of Prop.~\ref{seminilfour}.
    \begin{enumerate}
        \item Since $G$ needs to be a model geometry,
            $a = 0$ by the tracelessness condition
            from Lemma \ref{lemma:semisum_center_traceless}.
        \item If $b = 0$, then $x_1$, $x_2$, $x_3$, and $x_5$ span an abelian
            ideal, so $G$ is also expressible as $\R^4 \rtimes \R$. Hence
            $b \neq 0$. Rescale $x_5$ by a factor of $b^{-1}$---i.e.\ replace $x_5$
            with $b^{-1} x_5$---to make $b = 1$.
        \item If $c \neq 0$, then similarly rescale
            $\vecspan{x_1, x_2, x_3}$ by a factor of $c$ to make $c = 1$.
    \end{enumerate}
    In the end only one parameter can vary---$c$ may be $0$ or $1$.
    Either way, $\lie{g}$ is nilpotent and expressed in a basis
    with integral structure constants; so the corresponding simply-connected
    group $G$ admits a lattice \cite[Thm.~2.12]{raghunathan}.
    Therefore $G$ is a model geometry.
\end{proof}

\subsubsection{\texorpdfstring{$\R \oplus \lie{n}_3$}{R + n3} semidirect sums}
\label{heisenplusline}

Despite only producing two geometries,
this case is the messiest of the three---$\R \oplus \lie{n}_3$
has enough outer derivations to make this a long problem,
yet not so many that a systematic approach is as easy as
listing Jordan blocks in $\lie{sl}_4 \R$.
The main result is the following.

\begin{prop}[\textbf{Classification of $(\R \times \Heis_3) \semisum \R$ geometries}]
    \label{prop:heisenpluslorentz}
    Suppose $G = G/\{1\}$ is a maximal model geometry with Lie algebra $\lie{g}$
    of the form $(\R \oplus \lie{n}_3) \semisum_D \R$.
    If $\lie{g}$ contains no ideal isomorphic to $\R^4$ or $\lie{n}_4$, then
    $\lie{g}$ is isomorphic to a Lie algebra constructed by letting
    $1 \in \R$ act on $\R \oplus \lie{n}_3 = \vecspan{y,x_1,x_2,x_3}$ by
    \begin{align*}
        x_2 &\mapsto x_2 &
        x_3 &\mapsto -x_3 &
        y &\mapsto 0 \text{ or } x_1 .
    \end{align*}
    For both Lie algebras thus named,
    the corresponding simply-connected group $G$ is a model geometry.
\end{prop}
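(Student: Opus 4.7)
The plan is to mirror the strategy used for $\R^4$ and $\lie{n}_4$ semidirect sums: compute the outer derivation algebra $\op{sout}(\R \oplus \lie{n}_3)$, impose unimodularity and the center-tracelessness condition from Lemma~\ref{lemma:semisum_center_traceless}, classify the remaining derivations $D$ up to the natural equivalences, and then verify lattice existence in each surviving case.

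First I would write down $\op{sout}(\R \oplus \lie{n}_3)$. The derived ideal $\vecspan{x_1}$ and the center $\vecspan{y, x_1}$ are characteristic, so any derivation $D$ satisfies $Dx_1 = ax_1$ and $Dy = \alpha y + \beta x_1$. The Leibniz rule applied to $[x_3,x_2] = x_1$ gives one scalar relation, and the inner derivations are exactly the $\vecspan{x_1}$-valued linear maps on $\vecspan{x_2,x_3}$. Lemma~\ref{lemma:semisum_center_traceless} forces $\alpha + a = 0$, while unimodularity of $\lie{g}$ (Prop.~\ref{prop:filipk_solvable_groups}) forces $\alpha + a + \op{tr} \ol{D} = 0$, where $\ol{D}$ is the induced action on $(\R \oplus \lie{n}_3)/\vecspan{y,x_1} \cong \vecspan{x_2, x_3}$. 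Together these give $a = \alpha = 0$ and $\ol{D} \in \lie{sl}_2 \R$.

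Next I would classify $\ol{D}$ by Jordan type, using $\op{SL}(2,\R)$-changes of basis in $\vecspan{x_2,x_3}$ compatible with $[x_3,x_2] = x_1$. Three cases arise.
\begin{enumerate}[(a)]
    \item If $\ol{D}$ has nonreal eigenvalues, an $\op{SO}(2)$ rotation of $\vecspan{x_2,x_3}$ preserves both the Heisenberg bracket and $D$, producing an $S^1 \subseteq \op{Aut}(G)$; then $G/\{1\}$ is subsumed by $G \rtimes S^1 / S^1$ and is not maximal.
    \item If $\ol{D}$ is nilpotent, $\lie{g}$ itself is nilpotent, and a short analysis in the residual parameters shows $\lie{g}$ either contains an abelian ideal $\R^4$, contains an $\lie{n}_4$ ideal, or is isomorphic to the five-dimensional Heisenberg algebra $\Heis_5$. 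The first two are excluded by hypothesis, and $\Heis_5$ is excluded because $\op{Aut}(\Heis_5)$ contains a compact $\op{U}(2)$, so $\Heis_5/\{1\}$ is non-maximal.
    \item If $\ol{D}$ is diagonalizable with real nonzero eigenvalues $\pm \lambda$, this is the case producing the two geometries in the statement.
\end{enumerate}

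In case (c), the hypothesis $\lambda \neq 0$ lets me absorb the $y$-components of $Dx_2, Dx_3$ by replacing $x_2 \mapsto x_2 + cy$ and $x_3 \mapsto x_3 + c'y$; rescaling the semidirect $\R$-factor normalizes $\lambda$ to $1$; and rescaling $y$ normalizes the remaining parameter $\beta$ to $0$ or $1$, yielding exactly the two algebras named. For lattice existence, the $\beta = 0$ algebra is $\R \times \Sol^4_1$, which inherits lattices from $\Z \times \Lambda$ for any lattice $\Lambda \subset \Sol^4_1$. For $\beta = 1$ I would use the analogous construction: choose $t_0 > 0$ so that $\op{exp}(t_0 \ol{D})$ has integer characteristic polynomial (conjugate to a hyperbolic $\op{SL}(2,\Z)$ matrix such as $\begin{pmatrix} 2 & 1 \\ 1 & 1 \end{pmatrix}$), then lift a corresponding lattice in the $\Sol^4_1$-like quotient to a lattice of the form $(\Z y + \Z x_1 + \Lambda) \rtimes_{\op{exp}(t_0 D)} \Z$ in $G$, with $\Lambda \subset \vecspan{x_2,x_3}$ chosen $\op{exp}(t_0 \ol{D})$-invariant and compatible with $[x_3, x_2] = x_1$. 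The main obstacle I expect is case (b): enumerating the nilpotent subcases carefully enough to confirm that every one lands in an excluded class, and supplying the separate maximality argument for $\Heis_5$ via its symplectic automorphism group, which relies on information outside the machinery developed in this section.
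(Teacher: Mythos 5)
Your overall strategy is the same as the paper's: force $D$ into the shape dictated by unimodularity and tracelessness on the characteristic ideals, split on the Jordan type of the induced map $\ol{D}$ on $\vecspan{x_2,x_3}$, and handle the hyperbolic case by normalization plus an explicit lattice. Your reduction in the first step is correct (the Leibniz relation $Dx_1 = (\op{tr}\ol{D})\,x_1$ together with the two trace conditions does give $Dx_1 = 0$, $Dy \in \vecspan{x_1}$ and $\ol{D} \in \lie{sl}_2\R$), and case (c) with its lattice construction matches the paper. One small imprecision in case (a): a rotation of $\vecspan{x_2,x_3}$ does \emph{not} commute with $D$ when the $\vecspan{y}$-components of $Dx_2, Dx_3$ are nonzero, so before invoking the $S^1$ you must absorb those entries (and adjust $z$) by a substitution such as $u_3 = x_3 - y$, $w = z - dx_2$; the paper does exactly this.

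The genuine gap is in case (b): your trichotomy ``$\R^4$ ideal, $\lie{n}_4$ ideal, or $\Heis_5$'' is incomplete. When $\ol{D}$ is a nonzero nilpotent Jordan block, one of the normalized subcases is the algebra with nonzero brackets $[x_3,x_2]=x_1$, $[z,x_3]=x_2$, $[z,x_2]=y$ (the paper's $D_{1,0,1}$). This algebra is $3$-step nilpotent with $2$-dimensional center, hence is not $\Heis_5$; and since any $4$-dimensional ideal must contain $\lie{g}^2 = \vecspan{y,x_1,x_2}$, a direct check shows it contains no $4$-dimensional abelian ideal and no $\lie{n}_4$ ideal. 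So neither the hypothesis of the proposition nor your $\op{U}(2) \subset \op{Aut}(\Heis_5)$ argument disposes of it. It has to be ruled out by exhibiting a compact group of automorphisms directly --- the circle fixing $x_2$ and simultaneously rotating the planes $\vecspan{z,x_3}$ and $\vecspan{y,x_1}$ --- which is what the paper supplies and your proposal does not. (Your $\Heis_5$ observation is itself correct: it arises when $\ol{D}=0$ and $Dx_2, Dx_3$ have no $y$-component, and is non-maximal; but it is not the only leftover nilpotent case.)
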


The strategy will broadly be the same as before: list
elements of $\op{sout} (\R \oplus \lie{n}_3)$ by which $1 \in \R$
can act; use coordinate changes to cluster the resulting Lie algebras
by isomorphism type; and show that each admits a lattice.
To save some work, we begin with a lemma that eliminates
some of $\op{sout} (\R \oplus \lie{n}_3)$ from consideration.

\begin{lemma}
    Let $\lie{n} = \R \oplus \lie{n}_3$.
    If $G = G/\{1\}$ is a model geometry whose Lie algebra $\lie{g}$
    is of the form $\lie{n} \semisum \R$,
    then $\R$ acts tracelessly on $\lie{n}^2 = Z(\lie{n}_3)$.
\end{lemma}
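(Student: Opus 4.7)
The plan is to mirror the proof of Lemma~\ref{lemma:semisum_center_traceless}, with $\lie{n}^2 = Z(\lie{n}_3)$ playing the role of $Z(\lie{n})$. The useful simplification is that $\lie{n}^2$ is $1$-dimensional, so ``$\R$ acts with nonzero trace on $\lie{n}^2$'' just means ``$\R$ acts by a nonzero scalar''; such an action is automatically non-nilpotent.

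Suppose for contradiction that $\R$ acts on $\lie{n}^2$ with some nonzero trace $c$. Then the $\R$-action on $\lie{n}$ is non-nilpotent, so $\lie{g}$ is non-nilpotent. Since $\nilrad \lie{g}$ is a proper ideal containing the nilpotent ideal $\lie{n}$ and $\lie{g}/\lie{n}$ is $1$-dimensional, $\nilrad \lie{g} = \lie{n}$. Write the corresponding Lie group as $G = N \rtimes \R$. Since $G$ is a model geometry it admits a lattice $\Gamma$, and Mostow's lemma \cite[Lem.~3.9]{mostow} gives that $\Gamma \cap N$ is a lattice in $N$ and $\Gamma/(\Gamma \cap N)$ is a lattice in $\R$. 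Pick $\gamma \in \Gamma$ projecting to some $t \neq 0$; conjugation by $\gamma$ then acts on the $1$-dimensional closed connected subgroup $[N,N]$ (with Lie algebra $\lie{n}^2$) as multiplication by the positive scalar $e^{tc} \neq 1$.

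The only step departing from the template of Lemma~\ref{lemma:semisum_center_traceless} is the claim that $[N,N] \cap \Gamma$ is a lattice in $[N,N]$---the previous lemma used this inheritance only for $Z(N)$. This is the lower-central-series companion to the upper-central-series statement: in a simply-connected nilpotent Lie group, each term of the lower central series intersects a given lattice in a lattice (see e.g.~\cite[Ch.~II]{raghunathan}). Once this is in hand, we have a lattice in $[N,N] \cong \R$ preserved by a scaling factor different from $\pm 1$, which is impossible---the desired contradiction. I expect this lattice-inheritance step to be the only real obstacle, and it is minor.
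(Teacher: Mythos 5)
Your proof is correct and follows the same overall template as the paper's: contradiction hypothesis, Mostow's lattice-inheritance for the nilradical, an element of $\Gamma$ scaling the $1$-dimensional subgroup $[N,N]$ by a factor other than $\pm 1$, and the impossibility of a nontrivial discrete subgroup of $\R$ being preserved by such a scaling. The one step you flag as an obstacle is handled differently: you invoke the general fact that a lattice in a simply-connected nilpotent Lie group meets the commutator subgroup in a lattice (Raghunathan, Ch.~II --- a correct citation, and genuinely needed here since $Z(\Heis_3)$ is \emph{not} a term of the upper central series of $\R \times \Heis_3$, so the paper's earlier appeal to \cite[Prop.~2.17]{raghunathan} does not directly apply), whereas the paper sidesteps that theorem by producing a nontrivial element of $\Gamma \cap Z(\Heis_3)$ explicitly as the commutator of two lattice elements lying over linearly independent vectors of $N/Z(N) \cong \R^2$. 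Both routes are valid; the paper's is self-contained for this special case, while yours leans on a standard (and slightly stronger) structural result.
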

\begin{proof}
    Suppose $\R$ acts with nonzero trace on the characteristic ideal $Z(\lie{n}_3)$.
    Let $\Gamma$ be a lattice in $G = (\R \times \Heis_3) \rtimes \R$.
    Following the argument in Lemma \ref{lemma:semisum_center_traceless},
    some element of $\Gamma$ acts with determinant $< 1$ on $Z(\Heis_3)$, and
    $\Gamma \cap (\R \times \Heis_3)$ maps to a lattice in
    $(\R \times \Heis_3)/Z(\R \times \Heis_3) \cong \R^2$.

    Then over some two linearly independent elements of $\R^2$
    lie two elements of $\Gamma$; their commutator is a nontrivial
    element of $Z(\Heis_3) \cong \R$.
    This is impossible since no discrete subgroup of $\R$
    is stable under an automorphism with determinant $< 1$.
\end{proof}

In combination with the requirement that $\R$ acts
tracelessly on $Z(\lie{n})$ (Lemma \ref{lemma:semisum_center_traceless}),
this brings the dimension of the relevant subspace of $\op{sout} (\R \oplus \lie{n}_3)$
down to $6$.
\begin{lemma} \label{lemma:heisenplus_sout}
    If $\lie{n} = \R \oplus \lie{n}_3$ has ordered basis $(y, x_1, x_2, x_3)$
    (with $[x_3, x_2] = x_1$ as in Definition \ref{defn:nildefault}),
    then every element of $\op{sout}(\lie{n})$
    which also acts tracelessly on $\lie{n}^2 = \vecspan{x_1}$ and
    $Z(\lie{n}) = \vecspan{x_1, y}$ is represented
    by a matrix of the form
    \[
        \begin{pmatrix}
            0 & 0 & e & f \\
            d & 0 & 0 & 0 \\
            0 & 0 & a & b \\
            0 & 0 & c & -a
        \end{pmatrix} .
    \]
\end{lemma}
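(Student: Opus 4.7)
The plan is to compute $\op{der}(\lie{n})$ explicitly in the given basis, quotient by the inner derivations, and then impose the two tracelessness conditions. The organizing observation is that any derivation $D$ preserves the characteristic ideals $Z(\lie{n}) = \vecspan{y, x_1}$ and $\lie{n}^2 = \vecspan{x_1}$, so in the ordered basis $(y, x_1, x_2, x_3)$ the first two columns of $D$ are automatically of the form $Dy = \alpha_0 y + \alpha_1 x_1$ and $Dx_1 = \beta\, x_1$, while the last two columns $Dx_2$ and $Dx_3$ are \emph{a priori} arbitrary vectors in $\lie{n}$.

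Next, I would apply the Leibniz rule to the single nontrivial bracket $x_1 = [x_3, x_2]$. The expansion
\[ \beta\, x_1 = [Dx_3, x_2] + [x_3, Dx_2] \]
collapses, using centrality of $y$ and $x_1$, to the single relation that the $(x_2,x_2)$-entry and $(x_3,x_3)$-entry of $D$ sum to $\beta$. The tracelessness hypothesis on $\lie{n}^2 = \vecspan{x_1}$ then forces $\beta = 0$, so those diagonal entries are negatives of one another; call them $a$ and $-a$. Combined with $\beta = 0$, the tracelessness hypothesis on $Z(\lie{n}) = \vecspan{y, x_1}$ also forces $\alpha_0 = 0$.

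Finally, since $y$ and $x_1$ are central, $\op{ad}(\lie{n})$ is spanned by $\op{ad}(x_2)$ and $\op{ad}(x_3)$, which act as $x_3 \mapsto -x_1$ and $x_2 \mapsto x_1$ respectively and send the other basis vectors to zero. Subtracting an appropriate linear combination of these lets me assume the $x_1$-components of $Dx_2$ and $Dx_3$ vanish. Renaming $\alpha_1 = d$, the $(y, x_2, x_3)$-coefficients of $Dx_2$ as $(e, a, c)$, and those of $Dx_3$ as $(f, b, -a)$, produces exactly the displayed matrix. The argument is essentially bookkeeping; the one point to watch is that the Leibniz relation and the tracelessness condition on $\lie{n}^2$ interact—together they yield the single equation $\beta = 0$, rather than two independent constraints.
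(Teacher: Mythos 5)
Your proof is correct and follows essentially the same route as the paper's: use preservation of the characteristic ideals $\lie{n}^2$ and $Z(\lie{n})$ to constrain the first two columns, impose the tracelessness hypotheses, and subtract multiples of $\op{ad} x_2$ and $\op{ad} x_3$ to kill the $x_1$-components of $Dx_2$ and $Dx_3$. The only difference is cosmetic: you obtain the relation $a + (-a)$ on the lower diagonal block via the Leibniz rule applied to $[x_3,x_2]=x_1$ together with tracelessness on $\lie{n}^2$, whereas the paper reads it off directly from the tracelessness assumptions; both are valid.
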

\begin{proof}
    The tracelessness assumptions, along with the fact that
    derivations preserve characteristic ideals,
    imply that $D x_1 = 0$, that $D y \in \vecspan{x_1}$,
    and that the last two diagonal entries add to $0$.

    By subtracting multiples of the inner derivations
    $\op{ad} x_2$ and $\op{ad} x_3$, we can assume
    that $D x_2$ and $D x_3$ have zero $x_1$ coordinate.
\end{proof}

The proof of Prop.~\ref{prop:heisenpluslorentz} which now follows
is similar in spirit to that for $\lie{n}_4$ (Prop.~\ref{seminilfour})---it
shows that most of this space of derivations either produces no
maximal model geometries or produces geometries accounted for by previous cases.
\begin{proof}[Proof of Prop.~\ref{prop:heisenpluslorentz}]
    Give $\lie{g}$ the ordered basis $(y,x_1,x_2,x_3,z)$, where the
    first four elements are as above in Lemma \ref{lemma:heisenplus_sout}
    and $z$ spans the last $\R$ factor.
    Let $D \in \op{der} (\R \oplus \lie{n}_3)$ be the derivation
    by which $z$ acts;
    and let $D'$ be the lower right $2 \times 2$ block of $D$.

    \paragraph{Preparatory step: Simplify $D$ with coordinate changes.}
    By changing basis in $\vecspan{x_2, x_3}$, we can assume
    $D'$ is in Jordan form. By rescaling
    $z$, we may assume this block contains only $1$, $0$, and $-1$
    as entries.

    For $r \neq 0$, let $\mu_r: \R \oplus \lie{n}_3 \to \R \oplus \lie{n}_3$
    be the automorphism given by
    \begin{align*}
        x_1 &\mapsto r^2 x_1 &
        x_2 &\mapsto r x_2 &
        x_3 &\mapsto r x_3 &
        y &\mapsto r y .
    \end{align*}
    In the notation of Lemma \ref{lemma:heisenplus_sout},
    conjugating $D$ by $\mu_r$ replaces $d$ by $rd$
    while leaving the other entries unchanged;
    so we may also assume that $d$ will always be $0$ or $1$.

    \paragraph{Case 1: If $D' = 0$ then no new maximal geometries arise.}
    In this case, $D'$ commutes with the automorphisms of $\R \oplus \lie{n}_3$
    acting as rotations on $\vecspan{x_2,x_3}$.
    In the notation of Lemma \ref{lemma:heisenplus_sout},
    if $e = f = 0$, then these automorphisms also commute with $D$,
    so they extend to automorphisms of $\lie{g}$.
    This would make $G$ nonmaximal, since $G \cong G \rtimes S^1/S^1$.

    Thus $e$ and $f$ are not both zero;
    so we can conjugate by a rotation to make $f = 0$
    and a rescaling of $y$ to make $e = 1$.
    Under this modified basis, $D$ has the matrix
    \begin{align*}
        \begin{pmatrix}
            0 & & 1 & \\
            d & 0 & & \\
            & & 0 & \\
            & & & 0
        \end{pmatrix},
    \end{align*}
    where $d$ is $0$ or $1$. In either case, $\lie{g}$
    has an ideal isomorphic to $\R^4$ or $\lie{n}_4$:
    \begin{itemize}
        \item If $d = 0$, then $\vecspan{y, x_1, x_3, z} \subset \lie{g}$
            is a $4$-dimensional abelian ideal.
        \item If $d = 1$, then $\vecspan{x_1, y, x_2, z} \subset \lie{g}$
            is an ideal isomorphic to $\lie{n}_4$, with the basis ordered
            as in Definition \ref{defn:nildefault}---that is, the nonzero
            brackets are
            \begin{align*}
                [z,y] &= x_1  &
                [z,x_2] &= y .
            \end{align*}
    \end{itemize}

    \paragraph{Case 2: If $D' \neq 0$ is skew-symmetric then $G$ is non-maximal.}
    As in Case 1, $D'$ commuting with a compact group of automorphisms
    leads to the conclusion that $D$ has matrix
    \begin{align*}
        \begin{pmatrix}
            0 & & 1 & \\
            d & 0 & & \\
            & & & 1 \\
            & & -1 &
        \end{pmatrix}.
    \end{align*}
    Define
    \begin{align*}
        u_1 &= x_1 &
        u_2 &= x_2 &
        u_3 &= x_3 - y &
        v &= y &
        w &= z - dx_2 .
    \end{align*}
    Then $\vecspan{v, u_1, u_2, u_3} \cong \R \oplus \lie{n}_3$, and
    $w$ acts in this ordered basis by the matrix
    \begin{align*}
        \begin{pmatrix}
            0 & & & \\
            d & 0 & & \\
            & & & 1 \\
            & & -1 &
        \end{pmatrix},
    \end{align*}
    which commutes with a compact group of automorphisms,
    implying as in Case 1 that $G$ is non-maximal.

    \paragraph{Case 3: If $D'$ is a single Jordan block
        then no new maximal geometries arise.}
    In the notation of Lemma \ref{lemma:heisenplus_sout},
    either $e = 0$, or we can replace $x_3$
    by $x_3 - \frac{f}{e} x_2$ to make $f = 0$.
    If one of $e$ and $f$ remains nonzero, then $y$ can be rescaled
    to make it equal $1$. This produces three matrices for $D$:
    \begin{align*}
        D_{1,d,0} &= \begin{pmatrix}
            0 &   &   & \\
            d & 0 &   & \\
              &   & 0 & 1 \\
              &   &   & 0
        \end{pmatrix} &
        D_{1,d,1} &= \begin{pmatrix}
            0 &   & 1 & \\
            d & 0 &   & \\
              &   & 0 & 1 \\
              &   &   & 0
        \end{pmatrix} &
        D_{1,d,2} &= \begin{pmatrix}
            0 &   &   & 1 \\
            d & 0 &   & \\
              &   & 0 & 1 \\
              &   &   & 0
        \end{pmatrix}
    \end{align*}
    In all but one case, an ideal isomorphic to $\lie{n}_4$ can be named
    by an ordered basis $(v_0, v_1, v_2, v_3)$ where the nonzero brackets are
    $[v_3, v_2] = v_1$ and $[v_3, v_1] = v_0$.
    \begin{align*}
        D_{1,d,0} &: x_1, x_2, -z, x_3 \\
        D_{1,1,1} &: x_1, y, x_2, z \\
        D_{1,d,2} &: x_1, x_2 + y, dx_3 - z, x_3
    \end{align*}

    If $D = D_{1,0,1}$, then $\lie{g}$ has basis $\{y,x_1,x_2,x_3,z\}$
    with brackets
    \begin{align*}
        [z,x_2] &= y  &
        [x_3,x_2] &= x_1  &
        [z,x_3] &= x_2  .
    \end{align*}
    In this case $\lie{g}$ admits a compact group of automorphisms $\phi_\theta$ defined by
    \begin{align*}
        z &\mapsto z \cos \theta + x_3 \sin \theta  &
        y &\mapsto y \cos \theta + x_1 \sin \theta  &
        x_2 &\mapsto x_2  \\
        x_3 &\mapsto -z \sin \theta + x_3 \cos \theta  &
        x_1 &\mapsto -y \sin \theta + x_1 \cos \theta ,
    \end{align*}
    so $G/\{1\}$ is non-maximal when $D = D_{1,0,1}$.

    \paragraph{Case 4: If $D' \neq 0$ is diagonal
        then $\lie{g}$ is determined up to isomorphism.}
    In this case, $D$ is the matrix
    \begin{align*}
        \begin{pmatrix}
            0 & & e & f \\
            d & 0 & & \\
            & & 1 & \\
            & & & -1
        \end{pmatrix}.
    \end{align*}
    Let
    \begin{align*}
        u_1 &= x_1 &
        u_2 &= x_2 + ey &
        u_3 &= x_3 - fy &
        v &= y &
        w &= z - dex_3 - dfx_2 .
    \end{align*}
    Then $u_1$ is central, $v$ commutes with all $u_i$, and
    \begin{align*}
        [u_3, u_2] &= u_1 \\
        [w, u_2] &= u_2 \\
        [w, u_3] &= -u_3 \\
        [w, v] &= du_1 .
    \end{align*}
    This shows that $e$ and $f$ do not affect the isomorphism type of $\lie{g}$;
    taking $e = f = 0$ recovers the
    the definition in the statement of Prop.~\ref{prop:heisenpluslorentz}.

    \paragraph{Final step: Verify these are model geometries}
    A finite-volume quotient of $G$ by a subgroup of its isometry group
    is a quotient by a lattice (Prop.~\ref{prop:geometries3}(ii));
    so it suffices to find a lattice in $G$.

    Suppose $s \in \R$ is nonzero.
    Putting coordinates $(y, x_1, x_2, x_3)$ on $\R \times \Heis_3$,
    the semidirect product $(\R \times \Heis_3) \rtimes_{e^{tA}} \R$ where
    \begin{align*}
        A &= \begin{pmatrix}
            0 & & & \\
            s^{-1} d & 0 & & \\
            & & -1 & 2 \\
            & & 2 & 1
        \end{pmatrix}
    \end{align*}
    is, by the preparatory step, isomorphic to $G$.
    In particular, if $s = \dfrac{\ln \frac{3 + \sqrt{5}}{2}}{\ln \sqrt{5}}$, then
    \begin{align*}
        e^{sA} &= \begin{pmatrix}
            1 & & & \\
            d & 1 & & \\
            & & 1 & 1 \\
            & & 1 & 2
        \end{pmatrix}.
    \end{align*}
    This is an integer matrix, so it preserves the lattice $\Gamma$
    in $\R \times \Heis_3$ consisting of the integer points.
    Then $\Gamma \rtimes_{e^{sA}} \Z$ is a lattice in $G$,
    with closed fundamental domain the standard unit cube of coordinates
    $[0,1]^5$.
\end{proof}

\subsection{Maximality and distinctness}
\label{sec:solvable_maximality}

This section proves that the geometries $G = G/\{1\}$ obtained above
are both maximal and distinct. Distinctness (Prop.~\ref{prop:solvable_distinctness})
mostly uses dimensions of characteristic subalgebras to distinguish
the corresponding Lie algebras from each other,
and is summarized in Figure \ref{fig:solvable_flowchart}.
Maximality (Prop.~\ref{prop:solvable_maximality})
uses the following theorem by Gordon and Wilson.\footnote{
    This generalizes \cite[Thm.~2(3)]{wilson1982isometry},
    which is the same result for nilpotent Lie groups.
}

\begin{thm}[part of {\cite[Thm.~4.3]{gordonwilsonisometry}}]
    \label{thm:gordonwilsonisometry}
    Suppose $G$ is a connected unimodular solvable Lie group with Lie algebra $\lie{g}$.
    If the elements of $\op{ad} \lie{g}$ have only real eigenvalues
    (i.e.\ $\lie{g}$ has all real roots), then in any invariant metric,
    there is some $C \subseteq \op{Aut} G$ such that $(\op{Isom} G)^0 = G \rtimes C$.
\end{thm}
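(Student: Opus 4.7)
The plan is to analyze the identity component $K$ of the isotropy subgroup of $(\op{Isom} G)^0$ at the identity $e \in G$, and show that under the real-roots hypothesis, $K$ must act on $G$ by Lie group automorphisms. Since $G$ acts transitively on itself by left translation, any isometry $\phi$ can be written as $L_{\phi(e)} \circ k$ for some $k$ fixing $e$; hence $(\op{Isom} G)^0 = G \cdot K$. Establishing $K \subseteq \op{Aut}(G)$ then yields both $G \cap K = \{e\}$ (since left-translations by non-identity elements move $e$) and the identification of the adjoint action of $K$ on $G$ with ordinary conjugation, which together give the semidirect product $(\op{Isom} G)^0 = G \rtimes K$ with $C = K$.

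To identify $K$ with a group of automorphisms, I would follow the strategy of Gordon and Wilson and measure the failure of right-invariance of the metric. Each left-invariant metric $\langle \cdot,\cdot\rangle$ on $\lie{g}$ determines a symmetric bilinear form $U: \lie{g} \times \lie{g} \to \lie{g}$ by
\[
    \langle U(x,y), z\rangle = \tfrac{1}{2}\bigl(\langle [z,x], y\rangle + \langle x, [z,y]\rangle\bigr),
\]
and the Levi-Civita connection at $e$ is expressible through $U$ and $\op{ad}$. Any $k \in K$ acts on $T_e G = \lie{g}$ as an orthogonal map $dk$ which preserves the Levi-Civita connection, hence commutes with both the curvature tensor and $U$. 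Expanding these commutation relations gives spectral constraints on $dk$ relative to each adjoint operator $\op{ad} x$.

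The main obstacle, and the point where the hypothesis enters essentially, is turning ``$dk$ respects the spectral data of each $\op{ad} x$'' into ``$dk$ preserves $[\cdot,\cdot]$.'' For a general solvable $G$, Gordon and Wilson construct a \emph{modified} Lie group $G'$ on the same underlying manifold whose left translations are already isometries and which absorbs a compact torus of rotations corresponding to the imaginary eigenvalues of the adjoint operators; the general conclusion is $(\op{Isom} G)^0 = G' \cdot C$ with $C$ in $\op{Aut}(G')$ rather than $\op{Aut}(G)$. Under the present hypothesis that every $\op{ad} x$ has only real eigenvalues, however, there are no imaginary parts to absorb: the modification procedure is trivial, $G' = G$, and the complementary factor $C$ lands in $\op{Aut}(G)$ itself. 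Compactness of $C$ is automatic since $K$ is a closed subgroup of the orthogonal group of $(\lie{g}, \langle \cdot,\cdot\rangle)$, completing the identification $(\op{Isom} G)^0 = G \rtimes C$.
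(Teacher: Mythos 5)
There is nothing in the paper to compare against here: Theorem~\ref{thm:gordonwilsonisometry} is imported verbatim (``part of [Gordon--Wilson, Thm.~4.3]'') and the paper supplies no proof, so your proposal can only be judged on its own terms as a reconstruction of Gordon and Wilson's argument. Your skeleton is the right one --- write $(\op{Isom} G)^0 = G \cdot \op{Stab}(e)$ using transitivity of left translations, then show the stabilizer lies in $\op{Aut}(G)$, with the real-roots hypothesis entering to rule out the ``rotational'' part of the stabilizer that fails to be automorphic. (One small caveat: taking $K$ to be the \emph{identity component} of the stabilizer gives $(\op{Isom} G)^0 = G\cdot K$ only when the stabilizer is connected, which holds for simply-connected $G$ but is not automatic for merely connected $G$ as in the statement.)

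The genuine gap is in the middle step. You assert that $dk$ ``preserves the Levi-Civita connection, hence commutes with both the curvature tensor and $U$.'' An isometry fixing $e$ does preserve the curvature tensor and its covariant derivatives at $e$, because those are metric invariants; but $U$ is defined from the \emph{group structure} (via $\op{ad}$ and the left-invariant trivialization), not from the metric alone, so ``$dk$ commutes with $U$'' already presupposes that $k$ normalizes $G$ inside $(\op{Isom} G)^0$ --- which is essentially the conclusion you are trying to reach. Gordon and Wilson get around this by first proving a structure theorem for the full isometry algebra of a solvmanifold (the ``standard position'' and ``modification'' theory), and only then reading off that a unimodular group with all real roots is already in standard position, so that the modification $G'$ equals $G$ and the stabilizer acts by automorphisms. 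Your final paragraph invokes exactly this machinery as a black box, so as written the decisive step is delegated to the theorem being proved rather than established. To make this a proof you would need either to reproduce the standard-position argument, or to give an independent argument that the orthogonal complement of $\lie{g}$ in the isometry algebra normalizes $\lie{g}$ when all roots are real.
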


Then $G \cong G \rtimes C / C$, and $C$ becomes the stabilizer of the identity in $G$;
so for a maximal geometry, $C$ is a maximal compact subgroup of $\op{Aut} G$.
The following computation helps to find the maximal compact subgroups.
\begin{lemma} \label{lemma:compacts_in_aut}
    Suppose $H \subseteq \op{GL}(n,\R)$ is a Lie group of consisting of
    block upper-triangular matrices, where $k_1,\ldots,k_m$ are the block sizes.
    Then its maximal compact subgroup is conjugate in $\op{GL}(n,\R)$
    to a subgroup of $\op{SO}(k_1) \times \cdots \times \op{SO}(k_m)$.
\end{lemma}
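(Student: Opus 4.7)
The plan is to use the Haar-averaging trick to put the maximal compact subgroup $K \subseteq H$ inside a product of orthogonal groups compatible with the block decomposition, and then upgrade from $\op{O}$ to $\op{SO}$ using connectedness. The key input is that the block upper-triangular condition means $H$ (and hence $K$) preserves the flag $0 \subset F_1 \subset F_2 \subset \cdots \subset F_m = \R^n$, where $F_i$ is the span of the coordinates belonging to the first $i$ blocks.

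First I would note that since $K$ is compact, averaging any inner product on $\R^n$ against Haar measure produces a $K$-invariant inner product $\inner{\cdot,\cdot}$. Setting $W_i = F_i \cap F_{i-1}^\perp$ with respect to this inner product yields a $K$-invariant direct sum
\[
    \R^n = W_1 \oplus \cdots \oplus W_m
\]
with $\dim W_i = k_i$: both $F_i$ and $F_{i-1}$ are $K$-invariant, so $F_{i-1}^\perp$ is too, and hence so is the intersection. Choosing an orthonormal basis whose first $k_1$ vectors span $W_1$, next $k_2$ span $W_2$, and so on, defines a change-of-basis matrix $g \in \op{GL}(n,\R)$ such that $g^{-1} K g \subseteq \op{O}(k_1) \times \cdots \times \op{O}(k_m)$.

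The remaining step---and the only real subtlety---is moving from $\op{O}$ to $\op{SO}$ in each factor. Taking $H = \op{GL}(n,\R)$ with a single block shows that this is false in general: the maximal compact of $\op{GL}(n,\R)$ is $\op{O}(n)$. The way around this is connectedness: the maximal compact subgroup of a connected Lie group is connected, so in the setting where Lemma \ref{lemma:compacts_in_aut} is applied (namely with $H$ a connected group such as the identity component of $\op{Aut} G$, producing a connected stabilizer $C$ via Thm.~\ref{thm:gordonwilsonisometry}), the conjugate $g^{-1} K g$ is a connected subgroup of $\op{O}(k_1) \times \cdots \times \op{O}(k_m)$ and therefore lies in the identity component $\op{SO}(k_1) \times \cdots \times \op{SO}(k_m)$. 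This connectedness check is the one place where the statement could fail if interpreted too literally, so I would either include connectedness as a standing hypothesis or explicitly reduce to the identity component before beginning the averaging argument.
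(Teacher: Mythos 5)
Your proof is correct but takes a genuinely different route from the paper's. The paper first reduces to computing the maximal compact subgroup of the full group $G(k_1,\ldots,k_m)$ of invertible block upper-triangular matrices, then exhibits a deformation retraction of that group onto the block-diagonal subgroup and onward onto the orthogonal product, implicitly invoking the Cartan--Iwasawa--Malcev theorem that maximal compact subgroups are deformation retracts and that every compact subgroup is conjugate into one. You instead work directly with an arbitrary compact subgroup $K \subseteq H$: Haar averaging produces a $K$-invariant inner product, the flag preserved by $H$ splits orthogonally into $K$-invariant summands $W_i$ of dimension $k_i$, and an adapted orthonormal basis conjugates $K$ into $\op{O}(k_1)\times\cdots\times\op{O}(k_m)$. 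This is more elementary (no appeal to the conjugacy theorem for maximal compacts) and proves the slightly stronger fact that \emph{every} compact subgroup, not just a maximal one, is so conjugate. Your remark about $\op{O}$ versus $\op{SO}$ is also well taken: as literally stated the lemma fails for $H=\op{GL}(n,\R)$, whose maximal compact subgroup is $\op{O}(n)$, and the paper's own retraction really lands in $\op{O}(k_1)\times\cdots\times\op{O}(k_m)$ rather than the special orthogonal product, since individual block determinants may be negative. The discrepancy is harmless because the lemma is only ever applied to connected compact subgroups (the maximal compact \emph{connected} subgroup of $\op{Aut} G$ in the nilpotent case, and the stabilizer $C$ from Thm.~\ref{thm:gordonwilsonisometry}, which is connected as a quotient of $(\op{Isom} G)^0$), exactly as you observe; making connectedness a standing hypothesis, as you suggest, is the clean fix.
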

\begin{proof}
    Let $G(k_1,\ldots,k_m)$ be the group of block upper-triangular matrices
    where $k_1,\ldots,k_m$ are the block sizes. Since the maximal compact subgroup of $H$
    is a compact subgroup of $G(k_1,\ldots,k_m)$, it suffices to compute
    the maximal compact subgroup of $G(k_1,\ldots,k_m)$.

    The determinant of a block upper-triangular matrix is the product of the
    determinants of the blocks. The straight-line homotopy that sends
    matrix entries outside the blocks to zero also preserves the determinant,
    thereby specifying a deformation retract of $G(k_1,\ldots,k_m)$
    onto the corresponding group of block diagonal matrices,
    which deformation retracts onto $\op{SO}(k_1) \times \cdots \times \op{SO}(k_m)$.
\end{proof}

\begin{prop} \label{prop:solvable_maximality}
    Let $G$ be the simply-connected Lie group with one of the
    following solvable Lie algebras $\lie{g}$.
    Then $G/\{1\}$ is a maximal geometry.
    \begin{itemize}
        \item (nilpotent) $\R^4 \semisum \R$ where $1 \in \R$ acts with Jordan blocks
            with characteristic polynomials $(x^4)$ or $(x^3, x)$.
        \item (nilpotent) $\lie{n}_4 \semisum \R$ where $1 \in \R$ acts by
            $x_3 \mapsto x_1$ and $x_4 \mapsto 0$ or $x_3$ (as named in Prop.\ \ref{seminilfour}).
        \item $\R^3 \semisum \R^2$ where $\R^2$ acts by traceless diagonal matrices.
        \item $\R^4 \semisum \R$ where $1 \in \R$ acts diagonalizably
            with distinct eigenvalues or with Jordan blocks
            with characteristic polynomials
            $(x^2, x-1, x+1)$ or $((x-1)^2, (x+1)^2)$.
        \item $(\R \oplus \lie{n}_3) \semisum \R$ (as named in Prop.\ \ref{prop:heisenpluslorentz}) where $1 \in \R$ acts by
            \begin{align*}
                x_2 &\mapsto x_2  &  x_3 &\mapsto -x_3  &  y \mapsto 0 \text{ or } x_1 .
            \end{align*}
    \end{itemize}
\end{prop}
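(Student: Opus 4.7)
The plan is to apply Theorem \ref{thm:gordonwilsonisometry} (Gordon-Wilson) to each listed $\lie{g}$, reducing maximality to a statement about compact subgroups of $\op{Aut}(G)$. The real-roots hypothesis of Gordon-Wilson holds in every case: the nilpotent $\lie{g}$'s have only zero roots, and the non-nilpotent ones were built in the preceding propositions so that the $\R$ (or $\R^2$) complement to the nilradical acts with explicit real eigenvalues. Gordon-Wilson then gives $(\op{Isom} G)^0 = G \rtimes C$ with $C \subseteq \op{Aut}(G)$ compact, in any invariant metric. If $\op{Aut}(G)^0$ contains no nontrivial compact subgroup, then $C$ is trivial in every metric, so $(\op{Isom} G)^0 = G$, and maximality follows by Prop.~\ref{prop:maximality_with_metric}.

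The main step is therefore to show $\op{Aut}(\lie{g})^0$ has no nontrivial compact subgroup, for each $\lie{g}$ in the list. My plan is to exhibit a complete flag of characteristic subspaces $0 = \lie{f}_0 \subset \lie{f}_1 \subset \cdots \subset \lie{f}_5 = \lie{g}$ with one-dimensional quotients. Every automorphism of $\lie{g}$ preserves each $\lie{f}_i$, so in a basis adapted to the flag $\op{Aut}(\lie{g})^0$ lies in upper-triangular matrices with $1 \times 1$ blocks; Lemma \ref{lemma:compacts_in_aut} then forces the maximal compact subgroup to be trivial. The flags are assembled from standard invariants: the lower and upper central series of $\lie{g}$ and of $\nilrad(\lie{g})$; centralizers of characteristic ideals (for instance $C_\lie{g}([\lie{g},\lie{g}])$, which recovers the nilradical $\R^4$ in several nilpotent cases); and, for non-nilpotent $\lie{g}$, the real eigenspace or generalized eigenspace decomposition of the semisimple part of $\op{ad}(z)$ on $\nilrad(\lie{g})$, where $z$ spans a complement to the nilradical. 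As a sample computation, in the nilpotent $\R^4 \semisum \R$ case with a single Jordan block the lower central series has dimensions $5,3,2,1,0$, and $C_\lie{g}([\lie{g},\lie{g}]) = \R^4$ supplies the missing four-dimensional step.

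The hard part is the case analysis, and in particular two subtleties. First, for $\R^3 \semisum \R^2$ and for $\R^4 \semisum \R$ with distinct eigenvalues, outer automorphisms permute the eigenspaces of the semisimple action; these permutations form a discrete subgroup of $\op{Aut}(\lie{g})/\op{Aut}(\lie{g})^0$, so after restricting to $\op{Aut}(\lie{g})^0$ each eigenspace is fixed pointwise and the flag is preserved. Second, for $((x-1)^2,(x+1)^2)$ the two Jordan blocks have the same nilpotent size, so the refinement of each two-dimensional generalized eigenspace must use the canonical Jordan filtration $\ker(\op{ad}(z) \mp 1)$ rather than any difference in block structure; the distinct eigenvalues $\pm 1$ still label the two generalized eigenspaces canonically. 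The remaining cases are routine applications of the same recipe.
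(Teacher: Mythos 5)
Your proposal is correct and follows essentially the same route as the paper: apply the Gordon--Wilson theorem, then kill the compact factor $C$ by exhibiting characteristic flags that force $(\op{Aut}\lie{g})^0$ into upper-triangular form, invoking Lemma \ref{lemma:compacts_in_aut}. The only differences are cosmetic --- you build the four-dimensional steps of the nilpotent flags from centralizers of terms of the lower central series where the paper uses sets like $\{x \mid \dim[x,\lie{g}] \leq 1\}$ (both work), and ``fixed pointwise'' should read ``preserved setwise'' for the eigenspaces in the non-nilpotent cases.
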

\begin{proof}
    Using Thm.~\ref{thm:gordonwilsonisometry} (the Gordon-Wilson result)
    requires showing that
    $\lie{g}$ is unimodular---which is already known since
    $G$ must admit a lattice to be a geometry---and
    that the eigenvalues of its adjoint representation are all real.

    \paragraph{Case 1: $G$ is nilpotent.}
    For a nilpotent $\lie{g}$, the adjoint representation's eigenvalues are all zero;
    so showing that the maximal compact connected subgroup of $\op{Aut} G$
    is trivial will show that $G$ is maximal.
    By Lemma \ref{lemma:compacts_in_aut} above,
    it suffices to show that each Lie algebra
    has a complete flag of characteristic ideals. They are as follows.
    \begin{itemize}
        \item If $\lie{g} = \R^4 \semisum \R [x^4]$:
            \begin{align*}
                \vecspan{x_1} &= \lie{g}^4  \\
                \vecspan{x_2, x_1} &= \lie{g}^3  \\
                \vecspan{x_3, x_2, x_1} &= \lie{g}^2  \\
                \vecspan{x_4, x_3, x_2, x_1}
                    &= \{ x \in \lie{g} \mid \dim [x, \lie{g}] \leq 1 \}
            \end{align*}
        \item If $\lie{g} = \R^4 \semisum \R [x^3, x]$:
            \begin{align*}
                \vecspan{x_1} &= \lie{g}^3  \\
                \vecspan{x_2, x_1} &= \lie{g}^2  \\
                \vecspan{x_4, x_2, x_1} &= \lie{g}^2 + Z(\lie{g}) \\
                \vecspan{x_3, x_4, x_2, x_1}
                    &= \{ x \in \lie{g} \mid \dim [x, \lie{g}] \leq 1 \}
            \end{align*}
        \item If $\lie{g} = \lie{n}_4 \semisum \R$ and $[x_5, x_4] = 0$:
            \begin{align*}
                \vecspan{x_1} &= \lie{g}^3  \\
                \vecspan{x_2, x_1} &= \lie{g}^2  \\
                \vecspan{x_5, x_2, x_1}
                    &= \{x \in \lie{g} \mid [x, \lie{g}] \subseteq \vecspan{x_1}\} \\
                \vecspan{x_3, x_5, x_2, x_1}
                    &= \{ x \in \lie{g} \mid [x, \vecspan{x_2, x_1}] = [x, \vecspan{x_5, x_2, x_1}] \}
            \end{align*}
        \item If $\lie{g} = \lie{n}_4 \semisum \R$ and $[x_5, x_4] = x_3$:
            \begin{align*}
                \vecspan{x_1} &= \lie{g}^4  \\
                \vecspan{x_2, x_1} &= \lie{g}^3  \\
                \vecspan{x_3, x_2, x_1} &= \lie{g}^2  \\
                \vecspan{x_5, x_3, x_2, x_1}
                    &= \{ x \in \lie{g} \mid \dim [x, \lie{g}] \leq 2 \} \qedhere
            \end{align*}
    \end{itemize}

    \paragraph{Case 2: $G$ is not nilpotent.}
    The descriptions of these Lie algebras
    are explicit enough that the eigenvalues can be verified
    to be real by inspection. So again it suffices to show that in each case,
    $\op{Aut} G = \op{Aut} \lie{g}$
    contains no nontrivial connected compact subgroups.
    By Lemma \ref{lemma:compacts_in_aut}, it suffices to show that
    $(\op{Aut} \lie{g})^0$ is upper-triangular in some basis.

    In each case, an automorphism of $\lie{g}$ preserves the nilradical $\lie{n}$
    and the decomposition of $\lie{n}$ into generalized eigenspaces of $\lie{g}/\lie{n}$,
    up to any reordering and scaling.
    Each generalized eigenspace has a natural flag---the
    filtration by the rank of generalized eigenvectors---which
    is also preserved. Except in two cases, this is enough data
    to make $(\op{Aut} \lie{g})^0$ upper-triangular.
    The two cases and their additional data are as follows.
    \begin{itemize}
        \item In the $(\R \oplus \lie{n}_3) \semisum \R$ geometries, the flag
            $0 \subset \vecspan{x_1} \subset \vecspan{y, x_1}$ in the $0$-eigenspace
            is preserved since $\vecspan{x_1} = [\lie{n}, \lie{n}]$.
        \item In $\R^3 \semisum \R^2$, the subset of $\R^2 \cong \lie{g}/\lie{n}$
            consisting of points that act with a zero eigenvalue is preserved.
            This is a set of three concurrent lines. \qedhere
    \end{itemize}
\end{proof}

\begin{prop}[\textbf{Claim of correctness of Figure \ref{fig:solvable_flowchart}}]
    \label{prop:solvable_distinctness}
    All of the maximal geometries named above are distinct,
    with the exception that some of the geometries
    $\R^4 \semisum \R$ with $4$ distinct real eigenvalues
    may coincide with each other.
\end{prop}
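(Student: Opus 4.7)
Since each geometry in the list has trivial isotropy $G_p = \{1\}$, two such geometries are isomorphic exactly when the corresponding connected, simply-connected Lie groups are isomorphic, equivalently when their Lie algebras are isomorphic. The plan is to reduce to distinguishing the ten Lie algebras up to isomorphism, and then to work through Figure \ref{fig:solvable_flowchart} verifying that each branching test is well-defined and that each Lie algebra lands in the claimed leaf.

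The invariants needed, one for each decision point of the flowchart, are: nilpotency of $\lie{g}$; the dimension of $\lie{g}^4$; the existence of a $4$-dimensional abelian ideal; the isomorphism type of the nilradical $\nilrad \lie{g}$; the number of Jordan blocks of an element of $\lie{g} \smallsetminus \nilrad \lie{g}$ acting on $\nilrad \lie{g}$ when $\nilrad \lie{g} \cong \R^4$; and $\dim Z(\lie{g})$ when $\nilrad \lie{g} \cong \R \oplus \lie{n}_3$. I would argue well-definedness of each: $\nilrad \lie{g}$ is a characteristic ideal; the action of $\lie{g}/\nilrad \lie{g}$ on $\nilrad \lie{g}$ is well-defined up to conjugation by $\op{Aut}(\nilrad \lie{g})$ and rescaling on $\lie{g}/\nilrad \lie{g} \cong \R$ (when $\nilrad \lie{g}$ is abelian, any two lifts of a given element give the exact same derivation, so only automorphisms of $\nilrad \lie{g}$ and of $\R$ matter); and the Jordan block count is invariant under both.

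I would then verify the leaf assignments by direct computation on the defining brackets. In the nilpotent cases this involves computing the lower central series and enumerating $4$-dimensional abelian ideals: for instance $\SemiR{\R^4}{x^4}$ has $\lie{g}^4 = \vecspan{x_1} \neq 0$ and contains the $\R^4$ as an abelian ideal, whereas in $\SemiR{\Nil^4}{4 \to 3 \to 1}$ the derived algebra $\lie{g}^2$ is $3$-dimensional and any $4$-dimensional ideal containing it turns out to contain two elements whose bracket is nonzero. In the non-nilpotent cases I would identify $\nilrad \lie{g}$ using $[\lie{g},\lie{g}] \subseteq \nilrad \lie{g}$ and the Mubarakzyanov bound (cf.\ \cite[Thm.~2.5.2]{onishchik3}), then read off its isomorphism type, the action of $\lie{g}/\nilrad \lie{g}$, and the resulting Jordan block count or center dimension. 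The two $(\R \times \Heis_3) \semisum \R$ cases are separated by $Z(\lie{g})$: in $\R \times \Sol^4_1$ we have $Z(\lie{g}) = \vecspan{y, x_1}$ of dimension $2$, whereas in $\SemiR{(\R \times \Heis_3)}{\text{Lorentz},\, y \to x_1}$ the relation $[z, y] = x_1$ forces $y \notin Z(\lie{g})$, so $Z(\lie{g}) = \vecspan{x_1}$ of dimension $1$.

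The main obstacle is verifying isomorphism-invariance of the Jordan block count on the $\R^4$ nilradical, together with correctly identifying the nilradicals in the messier cases; once these are in hand, the ten geometries land at ten distinct leaves of the flowchart, except that the four-block leaf is an infinite parametric family whose internal coincidences the proposition explicitly declines to resolve.
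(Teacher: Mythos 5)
Your proposal is correct and follows essentially the same route as the paper: reduce to non-isomorphism of the Lie algebras, separate the nilpotent ones by the lower central series and existence of a $4$-dimensional abelian ideal, and separate the non-nilpotent ones by nilradical type, then by Jordan block data of the $\lie{g}/\nilrad\lie{g}$-action (for nilradical $\R^4$) or by $\dim Z(\lie{g})$ (for nilradical $\R \oplus \lie{n}_3$), leaving the internal coincidences of the four-distinct-eigenvalue family unresolved exactly as the statement allows. Your added care about well-definedness of the invariants (lifts acting identically when the nilradical is abelian, invariance of the Jordan block count under rescaling of $\R$) is a welcome elaboration of what the paper leaves implicit.
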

\begin{proof}
    Isomorphic geometries have isomorphic transformation groups;
    so it suffices to show that the corresponding Lie algebras
    are mutually non-isomorphic.

    Referring to the calculation in Prop.~\ref{prop:solvable_maximality},
    the nilpotent algebras are distinguished by whether they admit $4$-dimensional
    abelian ideals and the number of nonzero terms in their lower central series.

    The non-nilpotent algebras can be subdivided
    according to the isomorphism type of
    their nilradicals.
    \begin{itemize}
        \item Only $\R^3 \semisum \R^2$ has nilradical $\R^3$.
        \item The two $(\R \oplus \lie{n}_3) \semisum \R$ geometries
            are distinguished from each other by the dimensions of their
            centers---which are $\vecspan{x_1}$ if $[z,y] = x_1$,
            and $\vecspan{x_1, y}$ if $[z,y] = 0$).
        \item The non-nilpotent algebras $\lie{g} = \R^4 \semisum \R$
            can be distinguished from each other by the Jordan blocks
            by which $\R \cong \lie{g}/\R^4$ acts on the nilradical $\R^4$---up
            to a scale factor, to account for the ability to rescale $\R$.
            This distinguishes all but the case when one action of $\R$ on $\R^4$
            has $4$ distinct real eigenvalues that are a constant multiple
            of those of another action. \qedhere
    \end{itemize}
\end{proof}

\bibliographystyle{amsalpha}
\bibliography{main}

\end{document}